\providecommand{\tabularnewline}{\\}
\numberwithin{equation}{section}
\numberwithin{figure}{section}
\theoremstyle{plain}
\newtheorem{thm}{\protect\theoremname}[section]
\theoremstyle{definition}
\newtheorem{defn}[thm]{\protect\definitionname}
\theoremstyle{remark}
\newtheorem{rem}[thm]{\protect\remarkname}
\theoremstyle{plain}
\newtheorem{lem}[thm]{\protect\lemmaname}
\theoremstyle{plain}
\newtheorem{prop}[thm]{\protect\propositionname}
\theoremstyle{plain}
\newtheorem{cor}[thm]{\protect\corollaryname}
\definecolor{rot}{rgb}{1.000,0.000,0.000}
\newcommand{\N}{\mathbb{N}}
\newcommand{\R}{\mathbb{R}}
\newcommand{\eps}{\varepsilon}
\newcommand{\cE}{\mathcal{E}}
\newcommand{\cR}{\mathcal{R}}
\newcommand{\cD}{\mathcal{D}}
\newcommand{\e}{\mathrm{e}}
\newcommand{\fD}{\mathfrak D}
\renewcommand{\L}{\mathrm{L}}
\newcommand{\W}{\mathrm{W}}
\newcommand{\la}{\langle}
\newcommand{\ra}{\rangle}
\newcommand{\Gammlim}{\xrightarrow{\text{$\Gamma$}}}
\newcommand{\wGammlim}{\stackrel{\text{$\Gamma$}}{\rightharpoonup}}
\newcommand{\Moscolim}{\xrightarrow{\text{M}}}
\newcommand{\wstarlim}{\stackrel{*}{\rightharpoonup}}
\newcommand{\D}{\mathrm{D}}
\renewcommand{\d}{\mathrm{d}}
\renewcommand{\div}{\mathrm{div}}
\newcommand{\eff}{\mathrm{eff}}
\newcommand{\slow}{\mathrm{slow}}
\newcommand{\fast}{\mathrm{fast}}
\newcommand{\diff}{\mathrm{diff}}
\newcommand{\react}{\mathrm{react}}
\definecolor{rot}{rgb}{1.000,0.000,0.000}
\newcommand{\Prob}{\mathrm{Prob}}
\newcommand{\GamLimE}{\stackrel{\text{$\Gamma_\mathrm{E}$}}{\rightarrow}}
\newcommand{\wGamLimE}{\stackrel{\text{$\Gamma_\mathrm{E}$}}{\rightharpoonup}}
\newcommand{\MoscoLimE}{\stackrel{\text{$\mathrm{M}_\mathrm{E}$}}{\longrightarrow}}
\providecommand{\corollaryname}{Corollary}
\providecommand{\definitionname}{Definition}
\providecommand{\lemmaname}{Lemma}
\providecommand{\propositionname}{Proposition}
\providecommand{\remarkname}{Remark}
\providecommand{\theoremname}{Theorem}
\begin{document}
\title{EDP-convergence for a linear reaction-diffusion system with fast reversible
reaction\thanks{Research supported by DFG via SFB 1114 (project no.235221301, subproject C05).}}
\author{Artur Stephan\thanks{Weierstraß-Institut für Angewandte Analysis und Stochastik, Mohrenstraße 39, 10117 Berlin, Germany, e-mail: \tt{artur.stephan@wias-berlin.de}}}
\date{\today}
\maketitle

\lhead{EDP-convergence for linear RDS}

\rhead{Artur Stephan}

\chead{\today}

\paragraph{Abstract:}

We perform a fast-reaction limit for a linear reaction-diffusion system
consisting of two diffusion equations coupled by a linear reaction.
We understand the linear reaction-diffusion system as a gradient flow
of the free energy in the space of probability measures equipped with
a geometric structure, which contains the Wasserstein metric for the
diffusion part and cosh-type functions for the reaction part. The
fast-reaction limit is done on the level of the gradient structure
by proving EDP-convergence with tilting. The limit gradient system
induces a diffusion system with Lagrange multipliers on the linear
slow-manifold. Moreover, the limit gradient system can be equivalently
described by a coarse-grained gradient system, which induces a diffusion
equation with a mixed diffusion constant for the coarse-grained slow
variable.

\section{Introduction}

Considering two species $X_{1}$ and $X_{2}$ which diffuse in a bounded
medium $\Omega\subset\R^{d}$ and react linearly $X_{1}\leftrightharpoons X_{2}$,
the evolution of their concentrations $c=(c_{1},c_{2})$ can be described
by the linear reaction-diffusion system\\
\begin{align}
\dot{c_{1}}=\delta_{1}\Delta c_{1}-(\tilde{\alpha}c_{1}-\tilde{\beta}c_{2})\nonumber \\
\dot{c_{2}}=\delta_{2}\Delta c_{2}+(\tilde{\alpha}c_{1}-\tilde{\beta}c_{2})\label{eq:LRDS-0}
\end{align}
complemented with no-flux boundary conditions and initial conditions,
where $\delta_{1},\delta_{2}>0$ are diffusion coefficients for species
$X_{1}$ and $X_{2}$, respectively, and $\tilde{\alpha},\tilde{\beta}>0$
are reaction rates describing the reaction speed of the linear reaction
$X_{1}\leftrightharpoons X_{2}$. The aim of the paper is to investigate
system \eqref{eq:LRDS-0} if the reaction is much faster than the
diffusion. To do this, we introduce a small parameter $\eps>0$ and
assume that the reaction rates are given by $\tilde{\alpha}=\tfrac{1}{\eps}\sqrt{\tfrac{\alpha}{\beta}}$,
$\tilde{\beta}=\tfrac{1}{\eps}\sqrt{\tfrac{\beta}{\alpha}}$. Then,
the system \eqref{eq:LRDS-0} can be rewritten in an $\eps$-dependent
reaction-diffusion system
\begin{align}
\dot{c_{1}^{\eps}} & =\delta_{1}\Delta c_{1}^{\eps}-\frac{1}{\eps}\left(\sqrt{\tfrac{\alpha}{\beta}}c_{1}^{\eps}-\sqrt{\tfrac{\beta}{\alpha}}c_{2}^{\eps}\right)\nonumber \\
\dot{c_{2}^{\eps}} & =\delta_{2}\Delta c_{2}^{\eps}+\frac{1}{\eps}\left(\sqrt{\tfrac{\alpha}{\beta}}c_{1}^{\eps}-\sqrt{\tfrac{\beta}{\alpha}}c_{2}^{\eps}\right)\ .\label{eq:LRDS}
\end{align}
Reaction systems and reaction-diffusion systems with slow and fast
time scales have attracted a lot of attention in the last years \cite{Evan80CTCDRS,HiHoPe00NDFR,Both03ILRCR,BotPie10QSSA,BotPie11ILRD,BoPiRo12CDLR,MurNin16FRL3CRDS,MieSte19ECLRS,DaDeJu20CDSFRL,MiPeSt20EDPCNLRS,PelRen20}.
Bothe and Hilhorst proved a fast-reaction limit $\eps\to0$ for \eqref{eq:LRDS}
in the following form.
\begin{thm}[\cite{BotHil02RDFRR}]
 Let $\Omega\subset\R^{d}$ be a domain with Lipschitz boundary.
Let $c_{1}^{\eps}$ and $c_{2}^{\eps}$ be weak solutions of \eqref{eq:LRDS}
with no-flux boundary conditions $\nabla c_{i}^{\eps}\cdot\nu=0$
on $\partial\Omega$. Then $c_{1}^{\eps}\rightarrow c_{1}$ and $c_{2}^{\eps}\rightarrow c_{2}$
in $\L^{2}([0,T]\times\Omega)$ as $\eps\rightarrow0$ and we have
$\frac{c_{1}}{\beta}=\frac{c_{2}}{\alpha}$. Moreover, defining the
coarse-grained concentration $\hat{c}=c_{1}+c_{2}$, then $\hat{c}$
solves the diffusion equation $\dot{\hat{c}}=\hat{\delta}\Delta\hat{c}$
with a new mixed diffusion coefficient $\hat{\delta}=\frac{\beta\delta_{1}+\alpha\delta_{2}}{\alpha+\beta}$.
\end{thm}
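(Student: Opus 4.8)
The plan is to run the standard singular-perturbation scheme: first derive $\eps$-uniform a priori bounds from a weighted energy--dissipation estimate, then extract a limit by compactness, and finally pass to the limit in the weak formulation. The natural Lyapunov functional is the weighted quadratic energy $\cE(c)=\tfrac12\int_\Omega\bigl(c_1^2/\beta+c_2^2/\alpha\bigr)\,\d x$, whose weights $(\beta,\alpha)$ encode the detailed-balance equilibrium of the reaction. Testing the first equation of \eqref{eq:LRDS} with $c_1^\eps/\beta$ and the second with $c_2^\eps/\alpha$, summing and integrating by parts (using the no-flux conditions), I would obtain
\begin{equation*}
\frac{\d}{\d t}\cE(c^\eps)
= -\int_\Omega\Bigl(\tfrac{\delta_1}{\beta}|\nabla c_1^\eps|^2+\tfrac{\delta_2}{\alpha}|\nabla c_2^\eps|^2\Bigr)\,\d x
-\frac{c_0}{\eps}\int_\Omega(\alpha c_1^\eps-\beta c_2^\eps)^2\,\d x,
\end{equation*}
with $c_0=(\alpha\beta)^{-3/2}>0$, since the reaction factors recombine as $\sqrt{\alpha/\beta}\,c_1-\sqrt{\beta/\alpha}\,c_2=(\alpha c_1-\beta c_2)/\sqrt{\alpha\beta}$ and $c_1/\beta-c_2/\alpha=(\alpha c_1-\beta c_2)/(\alpha\beta)$. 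Integrating in time yields, uniformly in $\eps$: each $c_i^\eps$ is bounded in $\L^\infty(0,T;\L^2(\Omega))\cap \L^2(0,T;\H^1(\Omega))$, together with the crucial fast-reaction bound $\|\alpha c_1^\eps-\beta c_2^\eps\|_{\L^2((0,T)\times\Omega)}^2\le C\eps$.

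The main obstacle is strong $\L^2$ compactness. One cannot apply Aubin--Lions to the individual $c_i^\eps$, because their time derivatives contain the stiff reaction term $\tfrac1\eps(\sqrt{\alpha/\beta}\,c_1^\eps-\sqrt{\beta/\alpha}\,c_2^\eps)$, which is controlled only at order $\eps^{-1/2}$ in $\L^2$ and therefore blows up. The resolution is to split into a slow (conserved) and a fast (equilibrating) variable. For the conserved quantity $\hat c^\eps=c_1^\eps+c_2^\eps$ the stiff terms cancel exactly, $\partial_t\hat c^\eps=\delta_1\Delta c_1^\eps+\delta_2\Delta c_2^\eps$, so $\partial_t\hat c^\eps$ is bounded in $\L^2(0,T;\H^{-1}(\Omega))$ while $\hat c^\eps$ is bounded in $\L^2(0,T;\H^1(\Omega))$; Aubin--Lions then gives $\hat c^\eps\to\hat c$ strongly in $\L^2((0,T)\times\Omega)$. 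Combining this with the fast-reaction bound $\alpha c_1^\eps-\beta c_2^\eps\to0$ in $\L^2$ and inverting the (regular) linear change of variables,
\begin{equation*}
c_1^\eps=\frac{\beta\,\hat c^\eps+(\alpha c_1^\eps-\beta c_2^\eps)}{\alpha+\beta},\qquad
c_2^\eps=\frac{\alpha\,\hat c^\eps-(\alpha c_1^\eps-\beta c_2^\eps)}{\alpha+\beta},
\end{equation*}
I would recover strong $\L^2$ convergence $c_1^\eps\to c_1=\tfrac{\beta}{\alpha+\beta}\hat c$ and $c_2^\eps\to c_2=\tfrac{\alpha}{\alpha+\beta}\hat c$, which in particular gives $c_1/\beta=c_2/\alpha$.

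It remains to identify the effective equation. Writing the weak form of the conservation law, $\la\partial_t\hat c^\eps,\varphi\ra=-\int_\Omega(\delta_1\nabla c_1^\eps+\delta_2\nabla c_2^\eps)\cdot\nabla\varphi\,\d x$ for test functions $\varphi$, I would pass to the limit using the weak convergence $\nabla c_i^\eps\rightharpoonup\nabla c_i$ in $\L^2$; since $\nabla c_1=\tfrac{\beta}{\alpha+\beta}\nabla\hat c$ and $\nabla c_2=\tfrac{\alpha}{\alpha+\beta}\nabla\hat c$, the flux becomes $\delta_1\nabla c_1+\delta_2\nabla c_2=\hat\delta\,\nabla\hat c$ with $\hat\delta=\tfrac{\beta\delta_1+\alpha\delta_2}{\alpha+\beta}$, giving exactly the weak form of $\dot{\hat c}=\hat\delta\Delta\hat c$ with no-flux boundary conditions. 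Finally, since this linear diffusion equation with the initial datum $\hat c(0)=\lim_\eps(c_1^\eps+c_2^\eps)(0)$ has a unique weak solution, the convergence holds for the whole family rather than merely along a subsequence.
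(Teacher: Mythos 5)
Your argument is correct, but it takes a different route from the one the paper attributes to Bothe and Hilhorst (and mirrors in its own compactness section). The cited proof uses the Boltzmann free energy $\cE(\mu)=\int_{\Omega}\sum_{j}E_{B}(c_{j}/w_{j})\,w_{j}\,\d x$ as the Lyapunov functional; its dissipation controls Fisher-information-type quantities $\int|\nabla c_{j}|^{2}/c_{j}$ and the relative quantity $\tfrac{1}{\eps}\int(\sqrt{\rho_{1}}-\sqrt{\rho_{2}})^{2}$, from which convergence to the slow manifold is extracted in $\L^{1}$ via Cauchy--Schwarz manipulations (compare Lemma \ref{lem:ConvergenceToSlowManifoldAndStongCompactness}). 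You instead use the weighted quadratic energy $\tfrac12\int(c_{1}^{2}/\beta+c_{2}^{2}/\alpha)\,\d x$, whose weights likewise encode detailed balance but whose dissipation identity is exact and linear-algebraic: it hands you $\L^{2}(0,T;\H^{1})$ bounds on each $c_{i}^{\eps}$ and the clean rate $\|\alpha c_{1}^{\eps}-\beta c_{2}^{\eps}\|_{\L^{2}}^{2}\le C\eps$ directly, after which the slow/fast splitting ($\hat{c}^{\eps}$ via Aubin--Lions, the fast variable via the $O(\eps)$ bound) and the flux identification $\delta_{1}\nabla c_{1}+\delta_{2}\nabla c_{2}=\hat{\delta}\nabla\hat{c}$ are exactly the standard scheme. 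Your computation of the cross term is right: $\sqrt{\alpha/\beta}\,c_{1}-\sqrt{\beta/\alpha}\,c_{2}=(\alpha c_{1}-\beta c_{2})/\sqrt{\alpha\beta}$ paired with $c_{1}/\beta-c_{2}/\alpha=(\alpha c_{1}-\beta c_{2})/(\alpha\beta)$ gives the sign-definite term with $c_{0}=(\alpha\beta)^{-3/2}$. What each approach buys: yours is more elementary and yields strong $\L^{2}$ convergence with a quantitative equilibration rate, but it exploits linearity of the reaction term in an essential way; the entropy approach is what survives for nonlinear mass-action reactions (modulo $\eps$-uniform $\L^{\infty}$ bounds, as the paper notes) and is the one that matches the gradient-structure viewpoint developed in the rest of the paper. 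Two small points you should make explicit: the a priori estimate requires $\L^{2}$-bounded (and, for whole-family convergence, convergent) initial data, and testing with the solution itself is legitimate only in the usual $c^{\eps}\in\L^{2}(\H^{1})$, $\dot{c}^{\eps}\in\L^{2}(\H^{-1})$ weak-solution framework for each fixed $\eps$.
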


Essentially, the proof uses the free-energy as a Lyapunov function
to derive $\eps$-uniform bounds on the concentrations $c_{i}^{\eps}$
and their gradients $\nabla c_{i}^{\eps}$, which is then used to
prove convergence towards the slow manifold $\left\{ c\in[0,\infty[^{2}\ |\ \alpha c_{1}=\beta c_{2}\right\} $.
This proof also works for nonlinear reactions once $\eps$-uniform
$\L^{\infty}$-estimates are established (see \cite{BotHil02RDFRR}).
On the linear slow manifold, one easily verifies that the coarse grained
concentration $\hat{c}:=\frac{\alpha+\beta}{\beta}c_{1}=\frac{\alpha+\beta}{\alpha}c_{2}=c_{1}+c_{2}$
solves $\dot{\hat{c}}=\hat{\delta}\Delta\hat{c}$ where $\hat{\delta}=\frac{\beta\delta_{1}+\alpha\delta_{2}}{\alpha+\beta}$
is the effective mixed diffusion coefficient.

In this work, we are not primary interested in convergence of solutions
of system \eqref{eq:LRDS}. Instead, we perform the fast-reaction
limit on the level of the underlying variational structure, which
then implies convergence of solutions as a byproduct. Our starting
point is that reaction-diffusion systems such as \eqref{eq:LRDS}
can be written as a gradient flow equation induced by a gradient system
$(Q,\cE,\cR_{\eps}^{*})$, where the state space $Q$ is the space
of probability measures $Q=\Prob(\Omega\times\left\{ 1,2\right\} )$
and the driving functional is the free-energy ${\cal E}(\mu)=\int_{\Omega}\sum_{j=1}^{2}E_{B}\left(\frac{c_{j}}{w_{j}}\right)w_{j}\d x$
for measures $\mu=c\ \d x$, with the Boltzmann function $E_{B}(r)=r\log r-r+1$
and the (in general space dependent) stationary measure $w=(w_{1},w_{2})^{\mathrm{T}}$.
The dissipation potential $\cR_{\eps}^{*}$ that determines the geometry
of the underlying space is given by two parts $\cR_{\eps}^{*}=\cR_{\diff}^{*}+\cR_{\react,\eps}^{*}$
describing the diffusion and reaction separately. Since the pioneering
work of Otto and coauthors \cite{JoKiOt98VFFP,Otto01GDEE} it is known
that diffusion has to be understood as a gradient system driven by
the free-energy in the space of probability measures equipped with
the Wasserstein distance. The corresponding dissipation potential
$\cR_{\diff}^{*}$ is quadratic and given by 
\[
\cR_{\mathrm{diff}}^{*}(\mu,\xi)=\frac{1}{2}\int_{\Omega}\sum_{i=1}^{2}\delta_{i}\left|\nabla\xi_{i}\right|^{2}\ \d\mu_{i}.
\]
Later Mielke \cite{Miel11GSRD} proposed a quadratic gradient structure
also for reaction-diffusion systems with the same driving functional.
Geometric properties of that gradient structure were investigated
in \cite{LieMie13GSGC,GKMP20HODOT}. Here, we are not interested in
that gradient structure, but use a different, the so-called \textit{cosh-type
gradient structure}, where the reaction part is given by 
\[
\cR_{\mathrm{react},\eps}^{*}(\mu,\xi)=\frac{1}{\eps}\int_{\Omega}\mathsf{C}^{*}(\xi_{1}(x)-\xi_{2}(x))\ \sqrt{\d\mu_{1}\d\mu_{2}},
\]
 with $\mathsf{C}^{*}(r)=4(\mathrm{cosh}(r/2)-1)$. Setting $\cR_{\eps}^{*}=\cR_{\diff}^{*}+\cR_{\react,\eps}^{*}$,
the reaction-diffusion system \eqref{eq:LRDS} can now be written
as a gradient flow equation 
\[
\dot{\mu}=\partial_{\xi}\cR_{\eps}^{*}(\mu,-\D\cE(\mu)).
\]
Although there are many gradient structures for \eqref{eq:LRDS} (see
e.g. \cite[Sect. 4]{MieSte19ECLRS}) and the cosh-type gradient structure
entails several technical difficulties as defining a nonlinear kinetic
relation and not inducing metric on $Q$, it nevertheless has several
significant features. Historically, it has its origin in \cite{Marc15CECP}
where, following thermodynamic considerations, chemical reactions
are written in exponential terms. In recent years, the cosh-gradient
structure has been derived via a large-deviation principle \cite{MiPeRe14RGFL,MPPR17NETP},
and it was shown that it is stable under limit processes \cite{LMPR17MOGG}
that are similar to our approach. Moreover, it does not explicitly
depend on the stationary measure $w$, which allows for an rigorous
distinction between the energetic and dissipative part \cite{MieSte19ECLRS}.
This is physically reasonable because a change of the energy by an
external field should not influence the geometric structure of the
underlying space.

The goal of the paper is to construct an effective gradient system
$(Q,\cE,\cR_{\eff}^{*})$ and perform the limit $(Q,\cE,\cR_{\eps}^{*})\to(Q,\cE,\cR_{\eff}^{*})$
as $\eps\to0$. For this, we use the notion of \textit{convergence
of gradient systems in the sense of the energy-dissipation principle},
shortly called \textit{EDP-convergence}. EDP-convergence was introduced
in \cite{DoFrMi18GSWE} and further developed in \cite{MiMoPe20EFED,MieSte19ECLRS}
and is based on the dissipation functional
\[
\fD_{\eps}^{\eta}(\mu)=\int_{0}^{T}\cR_{\eps}(\mu,\dot{\mu})+\cR_{\eps}^{*}(\mu,\eta-\D\cE(\mu))\ \d t
\]
which, for solutions $\mu$ of the gradient flow equation describes
the total dissipation between initial time $\cE(\mu(0))$ and final
time $\cE(\mu(T))$, and can now be defined for general trajectories
$\mu\in\L^{1}([0,T],Q)$. The notion of EDP-convergence with tilting
requires $\Gamma$-convergences of the energies $\cE_{\eps}\Gammlim\cE_{0}$
and of the dissipation functionals $\fD_{\eps}^{\eta}\Gammlim\fD_{0}^{\eta}$
in suitable topologies, such that for all tilts $\eta$ the limit
$\fD_{0}^{\eta}$ has the form $\fD_{0}^{\eta}(\mu)=\int_{0}^{T}\cR_{\eff}(\mu,\dot{\mu})+\cR_{\eff}^{*}(\mu,\eta-\D\cE_{0}(\mu))\ \d t$,
see Section \ref{subsec:EDP-convergence} for a precise definition.
Importantly, the effective dissipation potential $\cR_{\eff}$ in
the $\Gamma$-limit is independent of the tilts, hence allowing for
extended energies. In our situation, the tilts $\eta$ correspond
to an external potential $V=(V_{1},V_{2})$ added to the energy $\cE$.
On the level of the PDE, the starting reaction-diffusion system is
extended to a reaction-drift-diffusion system of the form
\[
\frac{\d}{\d t}\begin{pmatrix}c_{1}\\
c_{2}
\end{pmatrix}=\div\left(\begin{pmatrix}\delta_{1}\nabla c_{1}\\
\delta_{2}\nabla c_{2}
\end{pmatrix}+\begin{pmatrix}\delta_{1}c_{1}\nabla V_{1}\\
\delta_{2}c_{2}\nabla V_{2}
\end{pmatrix}\right)+\frac{1}{\eps}\begin{pmatrix}-\sqrt{\frac{\alpha}{\beta}}\e^{\frac{V_{1}-V_{2}}{2}} & \sqrt{\frac{\beta}{\alpha}}\e^{\frac{V_{2}-V_{1}}{2}}\\
\sqrt{\frac{\alpha}{\beta}}\e^{\frac{V_{1}-V_{2}}{2}} & -\sqrt{\frac{\beta}{\alpha}}\e^{\frac{V_{2}-V_{1}}{2}}
\end{pmatrix}\begin{pmatrix}c_{1}\\
c_{2}
\end{pmatrix}.
\]

The main result of the paper is Theorem \ref{thm:EDP-convergence}
which asserts tilt EDP-convergence of $(Q,\cE,\cR_{\eps}^{*})$ to
$(Q,\cE,\cR_{\eff}^{*})$ as $\eps\to0$ where the effective dissipation
potential is given by
\begin{align*}
\cR_{\eff}^{*} & =\cR_{\diff}^{*}+\chi_{\left\{ \xi_{1}=\xi_{2}\right\} },
\end{align*}
where $\chi_{A}$ is the characteristic function of convex analysis
taking values zero in $A$ and infinity otherwise. The effective dissipation
potential describes diffusion but restricts the chemical potential
$\xi=(\xi_{1},\xi_{2})$ to a linear submanifold. The induced gradient
flow equation of the gradient system $(Q,\cE,\cR_{\eff}^{*})$ is
then given by a system of drift-diffusion equations on a linear submanifold
with a space and time dependent Lagrange multiplier $\lambda$
\begin{align*}
\begin{array}{cc}
\dot{c_{1}} & =\div\left(\delta_{1}\nabla c_{1}+\delta_{1}c_{1}\nabla V_{1}\right)-\lambda\\
\dot{c_{2}} & =\div\left(\delta_{2}\nabla c_{2}+\delta_{2}c_{2}\nabla V_{2}\right)+\lambda
\end{array}\quad,\quad\frac{c_{1}}{\beta\e^{-V_{1}}}=\frac{c_{2}}{\alpha\e^{-V_{2}}}\ .
\end{align*}
Moreover, as an immediate consequence of Theorem \ref{thm:EDP-convergence},
we obtain that the effective gradient flow equation can be equivalently
described as a drift-diffusion equation of the coarse-grained concentration
$\hat{c}$, see Proposition \ref{prop:DissipationFunctionalInCoarseGrainedVariables}.
Introducing the mixed diffusion coefficient $\hat{\delta}^{V}=\frac{\delta_{1}\beta\e^{-V_{1}}+\delta_{2}\alpha\e^{-V_{2}}}{\beta\e^{-V_{1}}+\alpha\e^{-V_{2}}}$
and the mixed potential $\hat{V}=-\log(\frac{\beta}{\alpha+\beta}\e^{-V_{1}}+\frac{\alpha}{\alpha+\beta}\e^{-V_{2}})$,
we obtain 
\[
\hat{c}=\div\left(\hat{\delta}^{V}\nabla\hat{c}+\hat{\delta}^{V}\hat{c}\nabla\hat{V}\right),
\]
which is in accordance with \cite{BotHil02RDFRR} in the potential-free
case $V=\mathrm{const}$. Moreover, we obtain a natural coarse-grained
gradient structure $(\hat{Q},\hat{\cE},\hat{\cR^{*}})$, where $\hat{Q}=\Prob(\Omega)$
is the coarse-grained state space and $\hat{\cE},\hat{\cR^{*}}$ are
the coarse-grained energy functional and dissipation potential, respectively.
Interestingly, this coarse-grained gradient structure $(\hat{Q},\hat{\cE},\hat{\cR^{*}})$
contains the same information as the effective gradient structure
$(Q,\cE,\cR_{\eff}^{*})$, although defined on a smaller state space,
see Proposition \ref{prop:DissipationFunctionalInCoarseGrainedVariables}.

The result on tilt EDP-convergence is an immediate consequence of
the $\Gamma$-convergence result of the dissipation functional $\fD_{\eps}^{\eta}$
(Theorem \ref{thm:LiminfEstimate}). The primal dissipation potential
$\cR_{\eps}$ is given by an infimal sum consisting of diffusion fluxes
and reaction fluxes coupled via a generalized continuity equation,
see Section \ref{subsec:DissipationFunctional}. Theorem \ref{thm:LiminfEstimate}
follows from the following observations: $\cR_{\eps}^{*}$ converges
monotonically to a singular limit $\cR_{\eff}^{*}$, the primal dissipation
potentials $\cR_{\eps}$ degenerate. It is not possible to control
the rates of $\dot{\mu}_{1}$ and $\dot{\mu}_{2}$ separately by $\cR_{\eps}$,
since the reaction flux between both species may become unbounded.
Instead, it is possible to prove compactness for the sum (or slow
variable) $\mu_{1}+\mu_{2}$ by $\cR_{\eps}$, and proving convergence
towards the slow-manifold where an equilibration takes place, i.e.
$\alpha c_{1}^{\eps}-\beta c_{2}^{\eps}\to0$. The two pieces of complementary
information provide strong convergence of the densities $c^{\eps}$
in $\L^{1}([0,T]\times\Omega)$. This procedure has been already successfully
applied for linear and nonlinear reaction systems \cite{MieSte19ECLRS,MiPeSt20EDPCNLRS}
and is here applied to a space-dependent evolution system. A posteriori
we conclude that the limit measure $\mu^{0}$ has indeed an absolutely
continuous representative using results from \cite{AmGiSa05GFMS}.
The construction of the recovery sequence relies on the fact that
the limit dissipation functional can be equivalently written as a
functional of coarse-grained variables. Only the reaction flux, which
is present for positive $\eps>0$ and hidden for $\eps=0$, has to
be reconstructed. One observes that diffusion causes the reaction
flux on an infinitesimally small scale. Since the dissipation functional
considers also fluctuations which may be not strictly positive and
not smooth in contrast to the solution of the linear reaction diffusion
system \eqref{eq:LRDS}, the construction of a recovery sequence is
completed by a suitable approximation argument.

Let us finally mention, that the same results can also be established
for reaction-diffusion systems, where more than two species are involved.
Applying the coarse-graining and reconstruction machinery as developed
in \cite{MieSte19ECLRS}, a similar $\Gamma$-convergence result for
the dissipation functional can be proved. For notational convenience
we restrict to the two-species situation and briefly discuss the multi-species
case in Section \ref{sec:Multispecies}. We refer also to \cite{Step21CGRCLRS},
where coarse-graining and reconstruction for concentrations as well
as the fluxes is developed.

\section{Gradient structures}

\subsection{Gradient systems and the energy-dissipation principle}

\label{SectionGeneralGS}

Let us briefly recall what we mean with a gradient system. Following
\cite{Miel16EGCG}, we call a triple $(Q,\cE,\cR)$ a \textsl{gradient
system} if 
\begin{enumerate}
\item[(1)]  $Q$ is a closed convex subset of a Banach space $X$
\item[(2)] $\cE:Q\rightarrow\R_{\infty}:=\R\cup\{\infty\}$ is a functional
(such as the free energy)
\item[(3)] $\cR:Q\times X\rightarrow\R_{\infty}$ is a dissipation potential,
which means that for any $u\in Q$ the functional $\cR(u,\cdot):X\rightarrow\R_{\infty}$
is lower semicontinuous (lsc), nonnegative and convex, and it satisfies
$\cR(u,0)=0$. 
\end{enumerate}
We define the dual dissipation potential $\cR^{*}:Q\times X^{*}\to[0,\infty]$
using the Legendre transform via 
\[
\cR^{*}(u,\xi)=(\cR(u,\cdot))^{*}(\xi)=\sup_{v\in X}\left\{ \langle v,\xi\rangle-\cR(u,v)\right\} \ .
\]
The gradient system is uniquely described by $(Q,\cE,\cR)$ or, equivalently
by $(Q,\cE,\cR^{*})$ and, in particular, in this paper we use the
second representation.

The dynamics of a gradient system can be formulated in different ways
as an equation in $X$, in $\R$ or in $X^{*}$ (the dual Banach space
of $X$), respectively: 
\begin{enumerate}
\item[(1)] \textbf{Force balance in $X^{*}$:} $0\in\partial_{\dot{u}}\cR(u,\dot{u})+\D\cE(u)\in X^{*}$, 
\item[(2)] \textbf{ Power balance in $\R$:} $\cR(u,\dot{u})+\cR^{*}(u,-\D\cE(u))=-\la\D\cE(u),\dot{u}\ra$, 
\item[(3)] \textbf{ Rate equation in $X$:} $\dot{u}\in\partial_{\xi}\cR^{*}(u,-\D\cE(u))\in X$.
\end{enumerate}
(Here, $\partial$ denotes the subdifferential of convex analysis.)
Equations (1) and (3) are called \textit{gradient flow equation} associated
with $(Q,\cE,\cR^{*})$. The equivalent formulations rely on the following
fact: Let $X$ be a reflexive Banach space and $\Psi:X\rightarrow\R_{\infty}$
be a proper, convex and lsc. Then for every $\xi\in X^{*}$ and $v\in X$
the following five statements, the so-called \textit{Legendre-Fenchel-equivalences},
are equivalent:
\begin{center}
\begin{tabular}{lcr}
$v\in\mathrm{Argmin}_{w\in X}(\Psi(w)-\la\xi,w\ra)$ & $\Leftrightarrow$ & $\Psi(v)+\Psi^{*}(\xi)=\la\xi,v\ra$\tabularnewline
$~~~~~~~~~~\Leftrightarrow$ & $\Psi(v)+\Psi^{*}(\xi)=\la\xi,v\ra$ & $\Leftrightarrow$~~~~~\tabularnewline
$v\in\partial\Psi^{*}(\xi)$ & $\Leftrightarrow$ & $\xi\in\mathrm{Argmin}_{\eta\in X^{*}}(\Psi^{*}(\eta)-\la\eta,v\ra)$\tabularnewline
\end{tabular}
\par\end{center}

\begin{center}
Especially the second dynamic formulation, the power balance (2),
is interesting for us. Integrating the power balance (2) in time form
0 to $T$ and using the chain rule for the time-derivative of $t\mapsto\cE(u(t))$,
we get another equivalent formulation of the dynamics of the gradient
system, which is called \textit{Energy-Dissipation-Balance}: 
\begin{align}
\mathrm{(EDB)}~~~~~~~\cE(u(T))+\int_{0}^{T}\left[\cR(u,\dot{u})+\cR^{*}(u,-\D\cE(u))\right]\d t=\cE(u(0)).\label{eq:EDB}
\end{align}
Equation (EDB) compares the energy of the system at time $t=0$ and
at time $t=T$, the difference is described by the total dissipation
from $t=0$ to $t=T$. This gives rise to another definition: We define
the\textit{ De Giorgi dissipation functional} as 
\begin{align*}
\fD(u)=\int_{0}^{T}\left[\cR(u,\dot{u})+\cR^{*}(u,-\D\cE(u))\right]\d t,
\end{align*}
for $u\in\W^{1,1}([0,T],Q)$ and extend it to infinity otherwise.
The following \textit{energy-dissipation principle} provides the definition
for solutions of the gradient flow equation, see e.g. \cite[Prop.  1.4.1]{AmGiSa05GFMS},
\cite[Def. 1.1]{AMPSV12PLWG}, \cite[Thm 2.5]{MiMoPe20EFED}.
\par\end{center}
\begin{defn}
\begin{flushleft}
\label{def:EDP}We say $u\in\W^{1,1}([0,T],Q)$ is a solution of the
gradient flow equation (1) or (3) induced by the gradient system $(Q,\cE,\cR^{*})$,
if $\cE(u(0))<\infty$ and the energy-dissipation balance holds.
\par\end{flushleft}
\end{defn}

\subsection{Definition of EDP-convergence \label{subsec:EDP-convergence}}

The definition of EDP-convergence for gradient systems relies on the
notion of $\Gamma$-conver-\\
gence for functionals (cf. \cite{Dalm93IGC}). If $Y$ is a Banach
space and $I_{\eps}:Y\to\R_{\infty}$ we write $I_{\eps}\Gammlim I_{0}$
and $I_{\eps}\wGammlim I_{0}$ for $\Gamma$-convergence in the strong
and weak topology, respectively. If both holds this is called Mosco-convergence
and written as $I_{\eps}\Moscolim I_{0}$. 

For families of gradient systems $(X,\cE_{\eps},\cR_{\eps})$, three
different levels of EDP-convergence are introduced and discussed in
\cite{DoFrMi18GSWE,MiMoPe20EFED}, called simple EDP-convergence,
EDP-convergence with tilting and contact EDP-convergence with tilting
EDP-convergence with tilting is the strongest notion, since it implies
the other two notions. Here we will only use the first two notions.
For all three notions the choice of weak or strong topology is still
to be decided according to the specific problem.
\begin{defn}[Simple EDP-convergence]
\label{def:EDPcvg} A family of gradient structures $(Q,\cE_{\eps},\cR_{\eps})$
is said to \textit{EDP-converge} to the gradient system $(Q,\cE_{\mathrm{0}},\cR_{\mathrm{eff}})$
if the following conditions hold:
\begin{enumerate}
\item $\cE_{\eps}\Gammlim\cE_{0}$ on $Q\subset X$;
\item $\fD_{\eps}$ strongly $\Gamma$-converges to $\fD_{0}$ on $\L^{1}([0,T],Q)$
conditioned to bounded energies (we write $\fD_{\eps}\GamLimE\fD_{0}$),
i.e. we
\begin{enumerate}
\item (Liminf-estimate) For all strongly converging families $u_{\eps}\to u$
in $\L^{1}([0,T],Q)$ which satisfy $\sup_{\eps>0}\mathrm{ess\,sup}_{t\in[0,T]}\cE_{\eps}(u_{\eps}(t))<\infty$,
we have $\liminf_{\eps\to0^{+}}\fD_{\eps}(u_{\eps})\geq\fD_{0}(u)$,
\item (Limsup-estimate) For all $\widetilde{u}\in\L^{1}([0,T],Q)$ there
exists a strongly converging family $u_{\eps}\to\widetilde{u}$ in
$\L^{1}([0,T],Q)$ with $\sup_{\eps>0}\mathrm{ess\,sup}_{t\in[0,T]}\cE_{\eps}(\widetilde{u}_{\eps}(t))<\infty$
such that we have $\limsup_{\eps\to0^{+}}\fD_{\eps}(u_{\eps})\leq\fD_{0}(\widetilde{u})$.
\end{enumerate}
\item There is an effective dissipation potential $\cR_{\mathrm{eff}}:Q\times X\to\R_{\infty}$
such that $\fD_{0}$ takes the form of a dual sum, namely $\fD_{0}(u)=\int_{0}^{T}\{\cR_{\mathrm{eff}}(u,\dot{u}){+}\cR_{\mathrm{eff}}^{*}(u,-\D\cE_{\mathrm{eff}}(u))\}\d t$.
\end{enumerate}
\end{defn}

Similarly, one can also use weak $\Gamma$- or Mosco-convergence conditioned
to bounded energy, which we will then write as $\fD_{\eps}\wGamLimE\fD_{0}$
and $\fD_{\eps}\MoscoLimE\fD_{0}$. In fact, for our fast-slow reaction
systems we are going to prove $\fD_{\eps}\MoscoLimE\fD_{0}$.

A general feature of EDP-convergence is that, under suitable conditions,
solutions $u$ of the gradient flow equation $\dot{u}=\partial_{\xi}\cR_{\eff}^{*}(u,{-}\D\cE_{0}(u))$
of the effective gradient system $(X,\cE_{0},\cR_{\eff})$ are indeed
limits of solutions $u^{\eps}$ of the gradient flow equation $\dot{u}=\partial_{\xi}\cR_{\eps}^{*}(u,{-}\D\cE_{\eps}(u))$,
see e.g. \cite[Thm. 11.3]{Brai13LMVE}, \cite[Lem. 3.4]{MieSte19ECLRS}
and \cite[Lem. 2.8]{MiMoPe20EFED}.

A strengthening of simple EDP-convergence is the so-called \textit{EDP-convergence
with tilting}. This notion involves the tilted energy functionals
$\cE_{\eps}^{\eta}:Q\ni u\mapsto\cE_{\eps}(u)-\langle\eta,u\rangle$,
where the tilt $\eta$ (also called \textit{external loading}) varies
through the whole dual space $X^{*}$. 
\begin{defn}[{EDP-convergence with tilting (cf. \cite[Def. 2. 14]{MiMoPe20EFED}}]
A family of gradient structures $(Q,\cE_{\eps},\cR_{\eps})$ is said
to \textit{EDP-converge with tilting} to the gradient system $(Q,\cE_{0},\cR_{\eff})$,
if for all tilts $\eta\in X^{*}$ we have $(Q,\cE_{\eps}^{\eta},\cR_{\eps})$
EDP-converges to $(Q,\cE_{\eps}^{\eta},\cR_{\eff})$.
\end{defn}

Clearly, we have that $\cE_{\eps}\Gammlim\cE_{0}$ implies $\cE_{\eps}^{\eta}\Gammlim\cE_{0}^{\eta}$
for all $\eta\in X^{*}$ (and similarly for weak $\Gamma$-convergence),
since the linear tilt $u\mapsto-\langle\eta,u\rangle$ is weakly continuous.
The main and nontrivial assumption is that additionally 
\[
\fD_{\eps}^{\eta}:u\mapsto\int_{0}^{T}\!\!\big\{\cR_{\eps}(u,\dot{u})+\cR_{\eps}^{*}(u,\eta{-}\D\cE_{\eps}(u))\big\}\d t
\]
$\Gamma$-converges in $\L^{1}([0,T],Q)$ to $\fD_{0}^{\eta}$ for
all $\eta\in X^{*}$ and that this limit $\fD_{0}^{\eta}$ is given
in $\cR\oplus\cR^{*}$-form with $\cR_{\eff}$ via 
\[
\fD_{0}^{\eta}(u)=\int_{0}^{T}\!\!\big\{\cR_{\mathrm{eff}}(u,\dot{u})+\cR_{\mathrm{eff}}^{*}(u,\eta{-}\D\cE_{\mathrm{eff}}(u))\big\}\d t.
\]
The main point is that $\cR_{\eff}$ remains independent of $\eta\in X^{*}$.
We refer to \cite{MiMoPe20EFED} for a discussion of this and the
other two notions of EDP-convergence.

\section{Gradient system of reaction-diffusion systems\label{sec:GSforLRDS}}

In this section, we present the gradient system $(Q,\cE,\cR_{\eps}^{*})$,
which induces the reaction-diffusion system \eqref{eq:LRDS}. In Section
\ref{subsec:TiltedGFE} we derive the gradient flow equation of the
gradient system including general tilts of the energy. In Section
\ref{subsec:DissipationFunctional} we compute the primal dissipation
potential $\cR_{\eps}$, which is only implicitly given via a infimal-convolution,
and the total dissipation functional $\fD_{\eps}^{\eta}$, which will
be the main object of interest in Section \ref{sec:EDP-convergence-result}.
In Section \ref{sec:GSforLRDS}, the computations are basically formal;
the precise functional analytic setting is presented in Section \ref{sec:EDP-convergence-result}
which also includes the $\Gamma$-convergence and EDP-convergence
result.

\subsection{Gradient structure for the linear reaction system}

Although a gradient system induces a unique gradient flow equation,
a general evolution equation can often be described by many different
gradient systems. The choice of the gradient structure is a question
of modeling since it adds thermodynamic information to the system,
which is not inherent in the evolution equation itself. Here, we follow
the pioneering work of Otto and coauthors \cite{JoKiOt98VFFP,Otto01GDEE}
who showed that certain diffusion type equations can be understood
as a gradient flow equation of the free energy in the space of probability
measures equipped with the Wasserstein distance. Later Mielke proposed
a gradient structure for a reaction diffusion system satisfying detailed
balance \cite{Miel11GSRD}. For a system with two species with a reversible
reaction detailed balance is always satisfied. For the reaction part,
we use the gradient structure which has been derived via a large-deviation
principle from a microscopic Markov process in \cite{MiPeRe14RGFL}.
We refer also to \cite{Reng18GGSF}, where our choice of gradient
structure has been formally derived.

The gradient system $(Q,\cE,\cR_{\eps}^{*})$ is defined as follows:
The state space is the space of probability measure on $Q\times\left\{ 1,2\right\} $
\begin{align*}
Q & :=\mathrm{Prob}(\Omega\times\left\{ 1,2\right\} )=\{\mu=(\mu_{1},\mu_{2})\in\R^{2}:\mu_{i}\in{\cal M}(\Omega),\ \mu_{i}\geq0,~\sum_{i=1}^{2}\mu_{i}(\Omega)=1\},
\end{align*}
where we assume that $\Omega\subset\R^{d}$ is a compact domain with
normalized mass $|\Omega|=1$. The driving energy functional $\cE:Q\rightarrow\R_{\infty}:=\R\cup\{\infty\}$
is the free-energy of the reaction-diffusion system. It is finite
for measures $\mu=(\mu_{1},\mu_{2})$ with Lebesgue density $c=(c_{1},c_{2})$
only and has the form 
\begin{align}
\cE(\mu):=\begin{cases}
\int_{\Omega}\sum_{j=1}^{2}E_{B}\left(\frac{c_{j}}{w_{j}}\right)w_{j}\d x, & \mathrm{~if~}\mu=c\cdot\d x\\
\infty, & \mathrm{otherwise}.
\end{cases}\label{eq:Energy}
\end{align}
where the Boltzmann function is defined as $E_{B}(r)=r\log r-r+1$
and the positive stationary measure is given by $w=\frac{1}{\alpha+\beta}(\beta,\alpha)^{\mathrm{T}}$.
Note that the stationary measure $w$ as well as the energy ${\cal E}$
is $\eps$-independent. The derivative of the energy ${\cal E}$ is
only defined in its domain, i.e. for measures with Lebesgue density
$c$, and has the form
\[
\D\cE(\mu)=\sum_{j=1}^{2}\left(\log c_{j}-\log w_{j}\right)=\sum_{j=1}^{2}\left(\log\frac{c_{j}}{w_{j}}\right)\ .
\]

As the equation splits into a diffusion and reaction part, so does
the dual dissipation functional. We define
\begin{align*}
\cR_{\eps}^{*}(\mu,\xi):=\cR_{\mathrm{diff}}^{*}(\mu,\xi)+\cR_{\mathrm{react},\eps}^{*}(\mu,\xi)
\end{align*}
where 
\begin{align*}
\cR_{\mathrm{diff}}^{*}(\mu,\xi) & :=\frac{1}{2}\int_{\Omega}\sum_{j=1}^{2}\delta_{j}|\nabla\xi_{j}(x)|^{2}\d\mu_{j},\\
\cR_{\mathrm{react},\eps}^{*}(\mu,\xi) & :=\frac{1}{\eps}\int_{\Omega}\mathsf{C}^{*}(\xi_{1}(x)-\xi_{2}(x))\ \d\sqrt{\mu_{1}\mu_{2}},
\end{align*}
where we use the cosh-function $\mathsf{C}^{*}(x)=4\left(\cosh(x/2)-1\right)$
and for measures $\mu$ with Lebesgue density $c$ we have $\d\sqrt{\mu_{1}\mu_{2}}:=\sqrt{c_{1}c_{2}}\d x$.

The diffusion part ${\cal R}_{\diff}^{*}$ induces the Wasserstein
distance on $Q$. The $\eps$-dependent reaction part ${\cal R}_{\mathrm{react},\eps}^{*}$
forces the evolution close to a linear submanifold given by 
\[
\cR_{\react,\eps}^{*}(\mu,-\D\cE(\mu))=0\Leftrightarrow\alpha c_{1}-\beta c_{2}=0\,.
\]
Note, that since ${\cal R}_{\mathrm{react},\eps}^{*}$ is not 2-homogeneous,
it does not define a metric on $Q$. We refer to \cite{PRST20JPGGF}
which treats similar and general dissipation potentials and understands
them as generalized transport costs on discrete spaces. Note that
${\cal R}_{\eps}^{*}$ does not depend on the stationary measure $w$
explicitly, as highlighted in \cite{MieSte19ECLRS}.

\subsection{The tilted gradient flow equation\label{subsec:TiltedGFE}}

In this section, we derive the gradient flow equation of the gradient
system $(Q,\cE,\cR_{\eps}^{*})$. To exploit the full information
of the dissipation potential, we consider general tilted energies.
First, we present how a change of energy by a linear tilt corresponds
to a change of stationary measure, and secondly, we compute the induced
gradient flow equation.

Let us first consider two free energies \eqref{eq:Energy} with different
stationary measures $w,\widetilde{w}$, which may be space dependent
but are assumed to be positive. Assuming a density $\mu=c\,\d x$
and using $\sum_{i}\int_{\Omega}w_{i}\d x=\sum_{i}\int_{\Omega}c_{i}\d x=1$
(where we used $|\Omega|=1$), we have 
\[
\cE(\mu)=\sum_{i=1}^{2}\int_{\Omega}E_{B}\left(\frac{c_{i}}{w_{i}}\right)w_{i}\d x=\sum_{i=1}^{2}\int_{\Omega}\left\{ c_{i}\log c_{i}-c_{i}\log w_{i}\right\} \d x.
\]
In particular, we conclude that $\widetilde{{\cal E}}(\mu)+\sum_{i=1}^{2}\int_{\Omega}c_{i}\log\widetilde{w}_{i}\d x={\cal E}(\mu)+\sum_{i=1}^{2}\int_{\Omega}c_{i}\log w_{i}\d x$
which implies
\[
\widetilde{{\cal E}}(\mu)={\cal E}(\mu)+\sum_{i=1}^{2}\int_{\Omega}c_{i}\log\left(\frac{w_{i}}{\widetilde{w}_{i}}\right)\d x.
\]
Hence, changing the underlying stationary measure corresponds to a
linear tilt of the energy by a two component potential $V=(V_{1},V_{2})$
where $V_{i}=\log\left(\frac{w_{i}}{\widetilde{w}_{i}}\right)$. On
the other hand, a tilted energy has a different stationary measure
as its minimum. To compute the new stationary measure, we introduce
tilted energies 
\[
{\cal E}^{V}(\mu):={\cal E}(\mu)+\sum_{i=1}^{2}\int_{\Omega}V_{i}\d\mu_{i},
\]
where $V\in\mathrm{C}^{1}(\Omega,\R^{2})$ is a two component smooth
potential. Moreover, we introduce $\eta_{i}:=\e^{-V_{i}}$ and clearly,
we have $\eta_{i}>0$ on $\Omega\subset\R^{d}$. We compute the stationary
state $w^{V}$ by minimizing ${\cal E}^{V}$ on the space $Q=\mathrm{Prob}(\Omega\times\left\{ 1,2\right\} )$.
We obtain the space dependent stationary measure
\begin{align}
w_{i}^{V}=\frac{1}{Z}w_{i}\e^{-V_{i}},\ \mathrm{where}\ Z:=\sum_{i=1}^{2}\int_{\Omega}w_{i}\e^{-V_{i}}\d x\ .\label{eq:StationaryMeasure}
\end{align}

Next, we compute the tilted gradient flow equation $\dot{c}=\partial_{\xi}{\cal R}_{\eps}^{*}(\mu,-\D\cE^{V}(\mu))$,
which is induced by the gradient system $(Q,\cE^{V},\cR_{\eps}^{*}=\cR_{\diff}^{*}+\cR_{\react,\eps}^{*})$.
First, we observe that ${\cal E}(\mu)<\infty$ if and only if ${\cal E}^{V}(\mu)<\infty$.
Inserting $\xi_{i}=\left(-\D\cE^{V}(\mu)\right)_{i}$ into $\partial_{\xi}{\cal R}_{\diff,\eps}^{*}(\mu,\xi)$,
we see that
\[
\partial_{\xi}\cR_{\mathrm{diff}}^{*}(\mu,\cdot)|_{\xi=-\D\cE^{V}(\mu)}=-(\div(\delta_{i}c_{i}\nabla(-\log(c_{i}/w_{i})-V_{i}))_{i=1,2}=\div\left(\delta_{i}\nabla c_{i}+\delta_{i}c_{i}\nabla V_{i}\right)_{i=1,2},
\]
which is a system of two uncoupled drift-diffusion equations or Fokker-Planck
equations for the concentrations $c_{i}$ where the fluxes are given
by a diffusion part $-\delta_{i}\nabla c_{i}$ and a drift part $-\delta_{i}c_{i}\nabla V_{i}$.

For the reaction part of the dual dissipation potential, we insert
$\xi_{i}=\left(-\D\cE^{V}(\mu)\right)_{i}$ into $\partial_{\xi}{\cal R}_{\mathrm{react},\eps}^{*}(\mu,-\D\cE^{V}(\mu))$.
On readily verifies the identity $\left(\mathsf{C}^{*}\right)'(\log p-\log q)=\frac{p-q}{\sqrt{pq}}$
for the cosh-function and conclude
\begin{align*}
\sqrt{c_{1}c_{2}}\left(\mathsf{C}^{*}\right)'(\xi_{1}(x)-\xi_{2}(x))|_{\xi=-\D\cE^{V}(\mu)} & =\sqrt{c_{1}c_{1}}\frac{\frac{c_{2}}{w_{2}\eta_{2}}-\frac{c_{1}}{w_{1}\eta_{1}}}{\sqrt{\frac{c_{1}}{w_{1}\eta_{1}}\frac{c_{2}}{w_{2}\eta_{2}}}}=\sqrt{w_{1}\eta_{1}w_{2}\eta_{2}}\left(\frac{c_{2}}{w_{2}\eta_{2}}-\frac{c_{1}}{w_{1}\eta_{1}}\right).
\end{align*}
Hence, we get
\begin{align*}
\partial_{\xi_{1}}\cR_{\mathrm{react},\eps}^{*}(\mu,\cdot)|_{\xi=-\D\cE^{V}(\mu)} & =-\partial_{\xi_{2}}\cR_{\mathrm{react}}^{*}(\mu,\cdot)|_{\xi=-\D\cE^{V}(\mu)}=\frac{1}{\eps}\left(\sqrt{\frac{\beta}{\alpha}}\sqrt{\frac{\eta_{1}}{\eta_{2}}}c_{2}-\sqrt{\frac{\alpha}{\beta}}\sqrt{\frac{\eta_{2}}{\eta_{1}}}c_{1}\right),
\end{align*}
which is linear in $c=(c_{1},c_{2})$. In vector notation, we get
a tilted Markov generator of the form
\[
\partial_{\xi}{\cal R}_{\mathrm{react},\eps}^{*}(\mu,-\D\cE^{V}(\mu))=\frac{1}{\eps}\begin{pmatrix}-\sqrt{\frac{\alpha}{\beta}\frac{\eta_{2}}{\eta_{1}}} & \sqrt{\frac{\beta}{\alpha}\frac{\eta_{1}}{\eta_{2}}}\\
\sqrt{\frac{\alpha}{\beta}\frac{\eta_{2}}{\eta_{1}}} & -\sqrt{\frac{\beta}{\alpha}\frac{\eta_{1}}{\eta_{2}}}
\end{pmatrix}c=\frac{1}{\eps}\begin{pmatrix}-\sqrt{\frac{\alpha}{\beta}}\e^{\frac{V_{1}-V_{2}}{2}} & \sqrt{\frac{\beta}{\alpha}}\e^{\frac{V_{2}-V_{1}}{2}}\\
\sqrt{\frac{\alpha}{\beta}}\e^{\frac{V_{1}-V_{2}}{2}} & -\sqrt{\frac{\beta}{\alpha}}\e^{\frac{V_{2}-V_{1}}{2}}
\end{pmatrix}c
\]
 which has the space dependent stationary measure 
\[
w^{V}=\frac{1}{Z(\alpha+\beta)}(\beta\eta^{1},\alpha\eta^{2})^{\mathrm{T}}=\frac{1}{Z}(w_{1}\e^{-V_{1}},w_{2}\e^{-V_{2}})^{\mathrm{T}}\,.
\]
Summarizing, the tilted evolution equation has the form
\begin{equation}
\frac{\d}{\d t}\begin{pmatrix}c_{1}\\
c_{2}
\end{pmatrix}=\div\left(\begin{pmatrix}\delta_{1}\nabla c_{1}\\
\delta_{2}\nabla c_{2}
\end{pmatrix}+\begin{pmatrix}\delta_{1}c_{1}\nabla V_{1}\\
\delta_{2}c_{2}\nabla V_{2}
\end{pmatrix}\right)+\frac{1}{\eps}\begin{pmatrix}-\sqrt{\frac{\alpha}{\beta}}\e^{\frac{V_{1}-V_{2}}{2}} & \sqrt{\frac{\beta}{\alpha}}\e^{\frac{V_{2}-V_{1}}{2}}\\
\sqrt{\frac{\alpha}{\beta}}\e^{\frac{V_{1}-V_{2}}{2}} & -\sqrt{\frac{\beta}{\alpha}}\e^{\frac{V_{2}-V_{1}}{2}}
\end{pmatrix}\begin{pmatrix}c_{1}\\
c_{2}
\end{pmatrix},\label{eq:RDDS}
\end{equation}
which is a linear reaction-drift-diffusion system with space dependent
reaction rates. In the special case without external forcing $V=\mathrm{const}$,
we get the linear reaction diffusion system \eqref{eq:LRDS}. Note
that the reaction part still inherits symmetry since the product of
the off-diagonal elements is constant in space. In particular, not
all general linear reaction-drift-diffusion system with space dependent
reaction rates for two species can be expressed in the form \eqref{eq:RDDS}
and are induced by the gradient system $(Q,\cE^{V},\cR_{\eps}^{*}=\cR_{\diff}^{*}+\cR_{\react,\eps}^{*})$.

\subsection{The dissipation functional\label{subsec:DissipationFunctional}}

In this section, we compute the dissipation functional $\fD_{\eps}$,
which consists of two parts: the velocity part given by the primal
dissipation potential $\cR_{\eps}$ and the slope-part (sometimes
also called \textit{Fisher information}) $\cR_{\eps}^{*}(\mu,-\D\cE(\mu))$.
Here, all computations are formal and we always assume that the measure
$\mu$ has a Lebesgue density $c$. The precise functional analytic
setting is presented in the Section \ref{sec:EDP-convergence-result}.

The primal dissipation potential $\cR_{\eps}$, given by the Legendre
transform of the dual dissipation potential $\cR_{\eps}^{*}=\cR_{\diff}^{*}+\cR_{\eps,\react}^{*}$,
can be computed via inf-convolution of $\cR_{\diff}$ and $\cR_{\react,\eps}$.
First, we compute both primal dissipation potentials separately. To
do this, we introduce the following notation: For a convex, lsc. function
$\mathsf{F}:X\rightarrow[0,\infty]$ on a reflexive and separable
Banach space $X$ with Legendre dual $\mathsf{F}^{*}$, we define
the function $\widetilde{\mathsf{F}}:[0,\infty[\times X\to[0,\infty]$
by 
\[
\widetilde{\mathsf{F}}(a,x):=\left(a\,\mathsf{F}^{*}(\cdot)\right)^{*}(x)=\begin{cases}
a\,\mathsf{F}\left(\frac{1}{a}\,x\right) & \mathrm{for\ }a>0\ ,\\
\chi_{0}(x) & \mathrm{for\ }a=0\ .
\end{cases}
\]
Introducing the quadratic function $\mathsf{Q}(x)=\frac{1}{2}|x|^{2}$
on $\R^{d}$, the primal dissipation potential of the diffusion part
${\cal R_{\diff}^{*}}$ is given by

\[
\cR_{\mathrm{diff}}(\mu,v)=\sum_{j=1}^{2}\int_{\Omega}\widetilde{\mathsf{Q}}\left(\delta_{j}c_{j},J_{j}\right)\d x,
\]
where $J_{j}$ is, by definition, the unique solution of the elliptic
equation $v_{j}+\div J_{j}=0$ with $J\cdot\nu=0$ on $\partial\Omega$.
For positive $c_{j}$, we have $\widetilde{\mathsf{Q}}\left(\delta_{j}c_{j},J_{j}\right)=\frac{1}{2}\frac{|J_{j}|^{2}}{\delta_{j}c_{j}}$.

The primal dissipation potential of the reaction part is

\[
\cR_{\mathrm{react,\eps}}(\mu,b)=\begin{cases}
\int_{\Omega}\widetilde{\mathsf{C}}\left(\frac{\sqrt{c_{1}c_{2}}}{\eps},b_{2}\right)\d x, & \mathrm{for\ }b_{1}+b_{2}=0\\
\infty & \mathrm{for\ }b_{1}+b_{2}\neq0
\end{cases}\ ,
\]
where $\mathsf{C}=\left(\mathsf{C}^{*}\right)^{*}$ is the Legendre
transform of the cosh-function $\mathsf{C}^{*}(x)=4\left(\cosh(x/2)-1\right)$.
In the following, we use the inequality\textcolor{black}{
\begin{equation}
\frac{1}{2}|r|\cdot\log(|r|+1)\leq\mathsf{C}(r)\leq2|r|\cdot\log(|r|+1),\label{eq:InequalityForC}
\end{equation}
 which, in particular, implies that the Orlicz class for $A\subset\R^{d}$
given by 
\[
\widetilde{\L}^{\mathsf{C}}(A):=\{u\in\L^{1}(A):\int_{A}\mathsf{C}(u)\d x<\infty\},
\]
is, in fact, a Banach space} $\widetilde{\L}^{\mathsf{C}}(A)=\L^{\mathsf{C}}(A)$
with the norm $\|u\|_{\mathsf{C}}=\sup_{\int_{A}\mathsf{C}^{*}(v)\leq1}\left|\int_{A}uv\ \d x\right|$.

Importantly, functions $\widetilde{\mathsf{Q}}$, $\widetilde{\mathsf{C}}$
as well as the functionals $\cR_{\diff},\cR_{\react,\eps}$ are convex
on their domain of definition.

The primal dissipation potential $\cR_{\eps}$ is the inf-convolution
of ${\cal R}_{\mathrm{diff}}$ and ${\cal R}_{\mathrm{react},\eps}$,
and is given by 
\begin{align*}
\cR_{\eps} & (\mu,v)=\inf_{v=u_{1}+u_{2}}\left\{ \cR_{\mathrm{diff}}(\mu,u_{1})+\cR_{\mathrm{react},\eps}(\mu,u_{2})\right\} \\
=\inf_{J,b} & \left\{ \sum_{j=1}^{2}\int_{\Omega}\widetilde{\mathsf{Q}}\left(\delta_{j}c_{j},J_{j}\right)\d x+\int_{\Omega}\widetilde{\mathsf{C}}\left(\tfrac{\sqrt{c_{1}c_{2}}}{\eps},b_{2}(x)\right)\d x:\left\{ \begin{array}{c}
v_{1}=-\div J_{1}+b_{1}\\
v_{2}=-\div J_{2}+b_{2}\\
b_{1}+b_{2}=0
\end{array}\right\} \right\} .
\end{align*}
In time-integrated form we get for $v=\dot{\mu}$ that 
\begin{align*}
\int_{0}^{T}\cR_{\eps} & (\mu,\dot{\mu})\ \d t=\inf_{v=v_{1}+v_{2}}\int_{0}^{T}\left\{ \cR_{\mathrm{diff}}(\mu,v_{1})+\cR_{\mathrm{react},\eps}(\mu,v_{2})\right\} \d t\\
=\inf_{J,b} & \left\{ \int_{0}^{T}\left\{ \sum_{j=1}^{2}\int_{\Omega}\widetilde{\mathsf{Q}}\left(\delta_{j}c_{j},J_{j}\right)\d x+\int_{\Omega}\widetilde{\mathsf{C}}\left(\tfrac{\sqrt{c_{1}c_{2}}}{\eps},b_{2}(x)\right)\right\} \d x\ \d t:\ (c,J,b)\in\mathrm{(gCE)}\right\} .
\end{align*}
where we introduce the notation of a (linear) generalized continuity
equation
\[
(c,J,b)\in\mathrm{(gCE)}\ \ \Leftrightarrow\ \ \left\{ b_{1}+b_{2}=0\ \mathrm{and}\ \left\{ \begin{array}{c}
\dot{c}_{1}=-\div J_{1}+b_{1}\\
\dot{c}_{2}=-\div J_{2}+b_{2}
\end{array}\right\} \right\} .
\]
Without the reaction part, $\int_{0}^{T}{\cal R}_{\eps}\d t$ is the
dynamic formulation à la Benamou-Brenier of the Wasserstein distance
in $Q$ \cite{BenBre00CFMS}, which can be equivalently written in
the form
\[
{\cal W}_{2}(\mu_{0},\mu_{1})^{2}=\inf\left\{ \int_{0}^{1}\int_{\Omega}\sum_{j=1}^{2}\delta_{j}|v_{j}|^{2}\d\mu_{j}:\dot{\mu}_{j}+\div(\mu_{j}v_{j})=0,\ \mu_{j,0}=\mu_{0},\ \mu_{j,1}=\mu_{1}\right\} 
\]
expressed in terms of transport velocities $v_{j}=J_{j}/c_{j}$. The
Wasserstein distance can be interpreted as a cost in transporting
mass from one measure $\mu_{0}$ to $\mu_{1}$. In our situation $\int_{0}^{T}{\cal R}_{\eps}\d t$
is jointly convex in $c$, $J$ and $b$ and corresponds to modified
cost function which also takes the reaction fluxes into account. The
optimal diffusion fluxes $J_{j}$ and reaction fluxes $b_{j}$ have
to satisfy the generalized continuity equation. Note that $\int_{0}^{T}{\cal R}_{\eps}\d t$
does not induce a metric on $Q$ since the reaction part is not quadratic.

Next, we compute the tilted slope part $\cR_{\eps}^{*}(\mu,-\D\cE^{V}(\mu))$.
To do this, we introduce the relative densities $\rho^{V}$ of $\mu$
w.r.t. the stationary measure $w^{V}\d x$, i.e. $\rho_{j}^{V}=\frac{\d\mu}{w_{j}^{V}\d x}=\frac{c_{j}}{w_{j}^{V}}$,
where by \eqref{eq:StationaryMeasure} the stationary measure is $w_{j}^{V}=\frac{1}{Z}w_{i}\e^{-V_{j}}$.
Since $V\in\mathrm{C}^{1}(\Omega,\R^{2})$ and $\Omega\subset\R^{d}$
is compact, $\mu$ is absolutely continuous w.r.t. the Lebesgue measure
$\d x$ if and only if it is w.r.t. the stationary measure $w^{V}\d x$,
. Inserting $\xi=-$$\D\cE^{V}(\mu)=-(\log(c_{i}/w_{i})+V_{i})_{i=1,2}$
in the dual dissipation potential $\cR_{\eps}^{*}$, we get for the
diffusive part
\[
\cR_{\diff}^{*}(\mu,-\D\cE^{V}(\mu))=\frac{1}{2}\int_{\Omega}\sum_{j=1}^{2}\delta_{j}c_{j}|\nabla\left(\log c_{j}/w_{j}+V_{j}\right)|^{2}\d x.
\]
Using $w_{j}^{V}=\frac{1}{Z}w_{j}\e^{-V_{j}}$, a short calculation
shows $\delta_{j}c_{j}\left|\nabla\left(\log c_{j}/w_{j}+V_{j}\right)\right|^{2}=\delta_{j}w_{j}^{V}\frac{\left|\nabla\rho_{j}^{V}\right|^{2}}{\rho_{j}^{V}}.$
Hence, we have 
\[
\cR_{\diff}^{*}(\mu,-\D\cE^{V}(\mu))=\frac{1}{2}\int_{\Omega}\sum_{j=1}^{2}\delta_{j}w_{j}^{V}\frac{\left|\nabla\rho_{j}^{V}\right|^{2}}{\rho_{j}^{V}}\d x.
\]
For the reaction part, we use the identity $\mathsf{C}^{*}(\log p-\log q)=2\frac{\left(\sqrt{p}-\sqrt{q}\right)^{2}}{\sqrt{pq}}$
and get 
\begin{align*}
\cR_{\mathrm{react,\eps}}^{*}(\mu,-\D\cE^{V}(\mu)) & =2\int_{\Omega}\frac{1}{\eps}\sqrt{c_{1}c_{2}}\frac{\left(\sqrt{c_{1}/\eta_{1}w_{1}}-\sqrt{c_{2}/\eta_{2}w_{2}}\right)^{2}}{\sqrt{c_{1}c_{2}/\eta_{1}w_{1}\eta_{2}w_{2}}}\d x\\
 & =\frac{2}{\eps}\int_{\Omega}\sqrt{w_{1}^{V}w_{2}^{V}}\left(\sqrt{\rho_{1}^{V}}-\sqrt{\rho_{2}^{V}}\right)^{2}\d x.
\end{align*}
Summarizing, the total dissipation functional is
\begin{align}
\fD_{\eps}^{V}(\mu)= & \int_{0}^{T}\cR_{\eps}(\mu,\dot{\mu})+\cR_{\eps}^{*}(\mu,-V-\D\cE(\mu))\d t\label{eq:EpsDissipationFunctional}\\
= & \inf_{(c,J,b)\in\mathrm{(gCE)}}\left\{ \int_{0}^{T}\left\{ \int_{\Omega}\sum_{j=1}^{2}\widetilde{\mathsf{Q}}\left(\delta_{j}c_{j},J_{j}\right)\d x+\int_{\Omega}\widetilde{\mathsf{C}}\left(\frac{\sqrt{c_{1}c_{2}}}{\eps},b_{2}(x)\right)\d x\right\} \ \d t\right\} +\nonumber \\
 & \quad\quad+\int_{0}^{T}\left\{ \frac{1}{2}\int_{\Omega}\sum_{j=1}^{2}\delta_{j}w_{j}^{V}\frac{\left|\nabla\rho_{j}^{V}\right|^{2}}{\rho_{j}^{V}}\d x+\frac{2}{\eps}\int_{\Omega}\sqrt{w_{1}^{V}w_{2}^{V}}\left(\sqrt{\rho_{1}^{V}}-\sqrt{\rho_{2}^{V}}\right)^{2}\d x\right\} \ \d t\,.\nonumber 
\end{align}

\section{EDP-convergence result\label{sec:EDP-convergence-result}}

In this section we state the EDP-convergence result for the gradient
systems $(Q,\cE,\cR_{\eps}^{*})$ to $(Q,\cE,\cR_{\eff}^{*})$ and
discuss the properties of the effective gradient system $(Q,\cE,\cR_{\eff}^{*})$.
Since the energy $\cE$ is $\eps$-independent the major challenge
is to prove $\Gamma$-convergence of the dissipation functional $\fD_{\eps}^{V}$,
which is a functional defined on the space of trajectories in the
state space $Q$. To be mathematical precise, we first fix the functional
analytic setting.

The state space $Q=\mathrm{Prob}(\Omega\times\left\{ 1,2\right\} )$
is equipped with the $p$-Wasserstein distance $d_{\mathcal{W}_{p}}$,
where in our situation either $p=1$ or $p=2$. Recall that for any
compact euclidean subspace $E\subset\R^{k}$ the $p$-Wasserstein
distance is defined on the space of probability measures $\mathrm{Prob}(E)$
by
\[
d_{\mathcal{W}_{p}}(\mu^{1},\mu^{2})^{p}=\min_{\gamma\in\Gamma(\mu^{1},\mu^{2})}\int_{E}|x-y|{}^{p}\d\gamma(x,y),
\]
where $\Gamma(\mu^{1},\mu^{2})$ is the set of all transport plans
with marginals $\mu^{1}$ and $\mu^{2}$ (see e.g. \cite{AmGiSa05GFMS}).
The $p$-Wasserstein distance $d_{\mathcal{W}_{p}}$ metrizises the
weak{*}-topology of measures, i.e. convergence tested against continuous
functions on $E$. In the following we will consider either $E=\Omega$
or $E=\Omega\times\left\{ 1,2\right\} $.

To define the topology in the space of trajectories on $Q$, we start
very coarse, where we understand the trajectories on $Q$ as measures
in space and time. We denote the space of trajectories by $\L_{w}^{\infty}([0,T],Q)$
equipped with the weak{*}-measureability. The weak{*}-convergence
is defined as usual by 
\[
\mu^{\eps}(\cdot)\rightarrow\mu^{0}(\cdot)\ \ :\Leftrightarrow\ \ \forall i\in\left\{ 1,2\right\} ,\,\forall\phi\in\mathrm{C}^{\infty}(\Omega\times[0,T]):\ \int_{0}^{T}\!\!\!\int_{\Omega}\phi\d\mu_{i}^{\eps}(x)\d t\rightarrow\int_{0}^{T}\!\!\!\int_{\Omega}\phi\d\mu_{i}^{0}(x)\d t.
\]
A finer topology, which enables to prove compactness and evaluate
the effective dissipation functional is then given by the a priori
bounds 
\[
\sup_{\eps\in]0,1]}\fD_{\eps}^{V}(\mu^{\eps})\leq C,\quad\sup_{\eps\in]0,1]}\underset{t\in[0,T]}{\mathrm{ess\ sup}}\ \cE(\mu^{\eps}(t))\leq C.
\]
In fact, as presented in Section \ref{subsec:Compactness}, these
bounds provide that the measures $\mu^{\eps}$ have Lebesgue densities
$c^{\eps}$ which converge strongly in $\L^{1}([0,T]\times\Omega,\R_{\geq0}^{2})$.
Moreover, the limiting coarse-grained measure $\hat{\mu}^{0}=\mu_{1}^{0}+\mu_{2}^{0}$
has an representative which is absolutely continuous in time with
values in $\left(\Prob(\Omega),d_{{\cal W}_{2}}\right)$, i.e. there
is a function $m\in\L^{1}([0,T]$ such that for all $t,s\in[0,T]$
with $s\leq t$ we have
\[
d_{{\cal W}_{2}}\left(\hat{\mu}^{0}(s),\hat{\mu}^{0}(t)\right)\leq\int_{s}^{t}m(r)\d r.
\]
Each component $\mu_{i}^{0}$, $i=1,2$ is not a trajectory in the
space of probability measure, but in the space of non-negative Radon
measures. Proposition \ref{prop:AbsolutelyContinuousRepresentative}
shows that $\mu_{i}^{0}$ is absolutely continuous in time with values
in $({\cal M}_{+}(\Omega),d_{{\cal W}_{1}})$ exploiting the dual
formulation of the 1-Wasserstein distance (see e.g. \cite{Edwa11KRT}).
This compactness result is comparable to the result of Bothe and Hilhorst
\cite{BotHil02RDFRR}, where also strong convergence of solutions
$c=(c_{1},c_{2})$ is proved. In particular, similar to the space
independent situation in \cite{MieSte19ECLRS,Step19EGCGSDTS,MiPeSt20EDPCNLRS}
one cannot guarantee that $\mu^{\eps}(t)\to\mu^{0}(t)$ in $Q$ for
all times $t\in[0,T]$ as jumps in time cannot be excluded. Instead
the limit $\mu^{0}=c^{0}\,\d x$ has an absolutely continuous representative.

\subsection{Main theorem}

Let us state our main EDP-convergence result. For doing this, we define
for $V\in\mathrm{C}^{1}(\Omega,\R^{2})$ the total dissipation functional
on $\L_{w}^{\infty}([0,T],Q)$ as
\begin{align*}
\fD_{\eps}^{V}(\mu) & =\begin{cases}
\int_{0}^{T}\left\{ \cR_{\eps}(\mu,\dot{\mu})+\cR_{\eps}^{*}(\mu,-\!\D\cE^{V}(\mu))\right\} \d t, & \mu\in\mathrm{AC}([0,T],Q),\mu=c\,\d x\ \mathrm{a.e.\,in}\,[0,T]\\
\infty & \mathrm{otherwise}.
\end{cases}
\end{align*}
If $\mu=c\,\d x$ a.e. in $[0,T]$, then the dissipation functional
is given by 
\begin{align}
 & \int_{0}^{T}\cR_{\eps}(\mu,\dot{\mu})+\cR_{\eps}^{*}(\mu,-V-\D\cE(\mu))\d t\nonumber \\
 & =\inf_{(c,J,b)\in\mathrm{(gCE)}}\left\{ \int_{0}^{T}\left\{ \int_{\Omega}\sum_{j=1}^{2}\widetilde{\mathsf{Q}}(\delta_{j}c_{j},J_{j})\d x+\int_{\Omega}\widetilde{\mathsf{C}}\left(\frac{\sqrt{c_{1}c_{2}}}{\eps},b_{2}(x)\right)\d x\right\} \ \d t\right\} +\nonumber \\
 & \ \ +\int_{0}^{T}\left\{ \frac{1}{2}\int_{\Omega}\sum_{j=1}^{2}\delta_{j}w_{j}^{V}\frac{\left|\nabla\rho_{j}^{V}\right|^{2}}{\rho_{j}^{V}}\d x+\frac{2}{\eps}\int_{\Omega}\sqrt{w_{1}^{V}w_{2}^{V}}\left(\sqrt{\rho_{1}^{V}}-\sqrt{\rho_{2}^{V}}\right)^{2}\d x\ \right\} \ \d t,\label{eq:EpsDissipationFunctional-2}
\end{align}
where the infimum is taken over all Borel fluxes $J_{j}\in{\cal M}([0,T]\times\Omega,\R^{d}),b_{j}\in{\cal M}([0,T]\times\Omega,\R)$
which satisfy the generalized continuity equation (gCE) in the sense
of distributions, i.e.
\begin{align*}
\forall j & =1,2\ \forall\phi\in\mathrm{C}_{c}^{\infty}([0,T]\times\Omega):\int_{0}^{T}\!\!\int_{\Omega}\dot{\phi}c_{j}-\nabla\phi\cdot J_{j}\d x\d t=-\int_{0}^{T}\!\!\int_{\Omega}b_{j}\phi\d x\d t,\\
 & \ J\cdot\nu=0\mathrm{\ \ on}\ \partial\Omega.
\end{align*}

\begin{rem}
Strictly speaking, the functions $\widetilde{\mathsf{Q}}$ and $\widetilde{\mathsf{C}}$
are not defined for measures $J_{j}$,$b_{j}$ and hence, the formula
for the dissipation functional \eqref{eq:EpsDissipationFunctional}
is a priori not correct. In fact, introducing the related functional
as in Lemma \ref{lem:LiminfEstimateAGS}, the dissipation functional
can be expressed via the densities of $J_{j},b_{j}$. These densities
are in $\L^{1}([0,T]\times\Omega)$ as Lemma \ref{lem:L1BoundBySuperlinerarGrowth}
shows. For notational convenience, we identify the measures with their
Lebesgue densities and stick to the above expression \eqref{eq:EpsDissipationFunctional}.
In Lemma \ref{lem:RegularityFluxesAndSpaceAndLiminf}, we, in fact,
show compactness for the sequence of measures $J_{i}^{\eps}$.
\end{rem}

The main result is the $\Gamma$-convergence of $\fD_{\eps}^{V}$
to the effective dissipation functional $\fD_{0}^{V}$ which is defined
by

\begin{align*}
\fD_{0}^{V}(\mu)=\begin{cases}
\int_{0}^{T}\cR_{\mathrm{eff}}(\mu,\dot{\mu})+\cR_{\mathrm{eff}}^{*}(\mu,-\D\cE^{V}(\mu))\d t, & \mathrm{if~}\mu\in\mathrm{AC}([0,T],Q),\mu=c\,\d x\ \mathrm{a.e.\,in}\,[0,T]\\
~~\infty & \mathrm{otherwise}.
\end{cases}
\end{align*}
where
\begin{equation}
\cR_{\mathrm{eff}}^{*}(\mu,\xi)=\cR_{\mathrm{diff}}^{*}(\mu,\xi)+\chi_{\left\{ \xi_{1}=\xi_{2}\right\} }(\xi),\ \ \cR_{\mathrm{eff}}(\mu,v)=\left(\cR_{\mathrm{eff}}(\mu,\cdot)\right)^{*}(v).\label{eq:Reff}
\end{equation}

\begin{thm}
\label{thm:GammaConvergenceDissipation}Let $V\in\mathrm{C}^{1}(\Omega,\R^{2})$.
On $\L_{w}^{\infty}([0,T],Q)$, we have $\Gamma$-convergence constraint
to bounded energies of $\fD_{\eps}^{V}$, i.e. $\fD_{\eps}^{V}\xrightarrow{\Gamma_{E}}\fD_{0}^{V}$
where
\begin{align}
\fD_{0}^{V}(\mu)=\begin{cases}
\int_{0}^{T}\cR_{\mathrm{eff}}(\mu,\dot{\mu})+\cR_{\mathrm{eff}}^{*}(\mu,-V-\D\cE(\mu))\d t, & \mu\in\mathrm{AC}([0,T],Q),\mu=c\,\d x\ \mathrm{a.e.}\ [0,T]\\
~~\infty & \mathrm{otherwise}
\end{cases}\label{eq:EffectiveDissipationFunctional}
\end{align}
with
\begin{align*}
\cR_{\mathrm{eff}}^{*}(\mu,\xi) & =\cR_{\mathrm{diff}}^{*}(\mu,\xi)+\chi_{\left\{ \xi_{1}=\xi_{2}\right\} }(\xi),\\
\cR_{\mathrm{eff}}(\mu,v) & =\inf_{u+\tilde{u}=v}\left\{ \cR_{\mathrm{diff}}(\mu,\tilde{u})+\chi_{0}(u_{1}+u_{2})\right\} =\\
 & =\inf\left\{ \sum_{j=1}^{2}\int_{\Omega}\widetilde{\mathsf{Q}}(\delta_{j}c_{j},J_{j})\d x:u_{1}+u_{2}=0,\left\{ \begin{array}{c}
v_{1}=-\div J_{1}+u_{1}\\
v_{2}=-\div J_{2}+u_{2}
\end{array}\right\} \right\} \ .
\end{align*}
\end{thm}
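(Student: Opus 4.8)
The plan is to establish the two halves of the $\Gamma$-convergence separately, both conditioned to the a priori bounds $\sup_\eps \fD_\eps^V(\mu^\eps)\le C$ and $\sup_\eps \mathrm{ess\,sup}_{t}\,\cE(\mu^\eps(t))\le C$.

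\emph{Liminf-estimate.} Given $\mu^\eps\to\mu^0$ in the weak-$*$ sense in $\L_w^\infty([0,T],Q)$ under these bounds, the first task is compactness. I would use the energy bound to obtain $\L^1$-equiintegrability of the densities $c^\eps$, and the diffusive velocity cost $\sum_j\int\widetilde{\mathsf Q}(\delta_j c_j,J_j)\,\d x$ to bound the diffusion fluxes $J^\eps$ and to produce $d_{\mathcal{W}_2}$-time-regularity of the coarse-grained slow variable $\hat\mu^\eps=\mu_1^\eps+\mu_2^\eps$; note that the reaction flux satisfies $b_1+b_2=0$ and hence drops out of the continuity equation for $\hat\mu^\eps$, which is the structural reason the slow variable is controllable. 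The singular reaction Fisher-information $\frac{2}{\eps}\int\sqrt{w_1^V w_2^V}(\sqrt{\rho_1^V}-\sqrt{\rho_2^V})^2\,\d x$, being bounded while carrying the prefactor $1/\eps$, forces $\sqrt{\rho_1^V}-\sqrt{\rho_2^V}\to0$, so the limit lies on the linear slow manifold $\rho_1^V=\rho_2^V$, that is $\frac{c_1}{\beta\e^{-V_1}}=\frac{c_2}{\alpha\e^{-V_2}}$. Combining the compactness of $\hat\mu^\eps$ with this constraint upgrades the weak-$*$ convergence to strong convergence $c^\eps\to c^0$ in $\L^1([0,T]\times\Omega,\R_{\ge0}^2)$.

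\emph{Liminf, lower bound on the cost.} With strong convergence of densities available, the liminf of each summand follows from standard lower semicontinuity of jointly convex integral functionals of measures (Ambrosio, Gigli, and Savar\'e): the diffusion Fisher-information and the velocity cost $\widetilde{\mathsf Q}(\delta_j c_j,J_j)$ are convex in $(c_j,\nabla\rho_j^V)$ and in $(c_j,J_j)$, hence lower semicontinuous along the converging densities. The decisive $\eps$-effect is the degeneration of the reaction velocity cost: since $\mathsf C(r)\sim\tfrac12 r^2$ near the origin, one has $\widetilde{\mathsf C}(a,x)=a\,\mathsf C(x/a)\to0$ as $a\to\infty$ for fixed $x$, so that $\widetilde{\mathsf C}(\sqrt{c_1 c_2}/\eps,b_2^\eps)\to0$ while $b^\eps$ may still converge, as measures, to a nontrivial limit flux $u$ with $u_1+u_2=0$. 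Passing to the limit in the generalized continuity equation then yields exactly the effective primal potential $\cR_{\eff}$, in which the reaction flux $u$ is admitted cost-free subject only to $u_1+u_2=0$ and $v_j=-\div J_j+u_j$.

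\emph{Limsup-estimate and main obstacle.} For the recovery sequence I would exploit that $\fD_0^V$ can be written purely in coarse-grained variables: on the slow manifold the reaction penalty vanishes and only the diffusion cost of the coarse-grained flow survives. Given a target $\mu^0=c^0\,\d x$ on the slow manifold with $\fD_0^V(\mu^0)<\infty$, the choice $\mu^\eps=\mu^0$ already matches the energy and the diffusion Fisher-information and makes the reaction Fisher-information vanish identically; taking the optimal limit fluxes $(J^0,u^0)$ and setting $J^\eps=J^0$, $b^\eps=u^0$ satisfies the $\eps$-continuity equation since $u_1^0+u_2^0=0$, and the only surplus term is the reaction velocity cost, which by the quadratic behaviour of $\mathsf C$ scales like $\eps\int (u_2^0)^2/\sqrt{c_1^0 c_2^0}\,\d x$ and therefore vanishes. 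The main obstacle, flagged in the introduction, is that the optimal fields $(J^0,u^0)$ are neither smooth nor strictly positive, whereas the reaction cost $\widetilde{\mathsf C}(\sqrt{c_1^0 c_2^0}/\eps,\cdot)$ degenerates precisely where $c_1^0 c_2^0$ vanishes, so the above scaling may fail to be integrable. I would therefore first prove the limsup-estimate on a dense class of regular, strictly positive trajectories, for which the flux reconstruction is unproblematic, and then extend to general $\mu^0$ by an approximation and diagonal argument using lower semicontinuity of $\fD_0^V$ and continuity of the relevant integrals along the approximating family. Controlling this density step, ensuring simultaneous convergence of densities, diffusion fluxes, and reconstructed reaction fluxes without inflating the cost near the vanishing set of $c^0$, is the technical heart of the argument.
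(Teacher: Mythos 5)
Your overall architecture matches the paper's proof: compactness of the coarse-grained variable $\hat\mu^\eps$ combined with the forced equilibration $\rho_1^V=\rho_2^V$ to get strong $\L^1$ convergence of $c^\eps$, lower semicontinuity of the convex integrands for the liminf, and a recovery sequence built from the coarse-grained representation with positivity and regularity restored by approximation. Two points, however, deserve to be called out as genuine gaps rather than omitted routine details.

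First, in the liminf you propose to pass to the limit in the reaction fluxes, asserting that $b^\eps$ ``may still converge, as measures, to a nontrivial limit flux $u$.'' The dissipation bound does \emph{not} give compactness of the fast reaction fluxes: taking $b^\eps=-\log\eps$ constant shows $\int\widetilde{\mathsf{C}}(\sqrt{c_1c_2}/\eps,b^\eps)\to0$ while $b^\eps\to\infty$ (this is Remark \ref{rem:NoBoundOnFastReactionFluxes}). The correct route, which the paper takes, is to avoid $b^\eps$ entirely: one defines $A_i^\eps:=\dot c_i^\eps+\div J_i^\eps$, uses that $A_1^\eps+A_2^\eps=b_1^\eps+b_2^\eps=0$ exactly, passes to the limit in this \emph{sum}, and then defines the admissible limit fluxes by $u_1:=\dot c_2^0+\div J_2^0$, $u_2:=\dot c_1^0+\div J_1^0$ from the limiting continuity equation. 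Your conclusion is reachable, but not by the mechanism you describe.

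Second, in the limsup you estimate the surplus reaction cost by the quadratic expansion $\eps\int(u_2^0)^2/\sqrt{c_1^0c_2^0}\,\d x$. This is only valid where $\eps|b_2|/\sqrt{c_1c_2}$ stays small, i.e.\ on your dense class of smooth, strictly positive trajectories; there it is fine. But the reconstructed reaction flux contains $\div\hat J$ (see \eqref{eq:Reconstruction}), which for a generic finite-dissipation trajectory is only a distribution, and the extension to general $\mu^0$ is precisely where the work lies. The paper needs: the logarithmic bound $\mathsf{C}(r)\le 2|r|\log(|r|+1)$ in place of the quadratic one; the interpolation argument giving $\hat c\in\L^{(d+1)/d}$ and hence $\hat J\in\L^{\tilde p}$ with $\tilde p=\tfrac{2d+2}{2d+1}>1$; a time-mollification of $(\hat c,\hat J)$ exploiting $\div\hat J=-\dot{\hat c}$; and a quantitative coupling $\gamma\gtrsim\epsilon^{1-\lambda}$, $\|\dot{\hat c}^{\epsilon}\|_{\tilde p}\lesssim\epsilon^{-\alpha}$ with $d+1\le\lambda/(2\alpha)$ between the positivity shift, the mollification scale and $\eps$ (Proposition \ref{prop:ConvOfCTermWithSmoothing}). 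You correctly identify this as the technical heart, but the proposal does not indicate how to carry it out, and in particular gives no reason why the three approximation parameters can be chosen compatibly so that the reaction cost still vanishes. As it stands the limsup half is a plan, not a proof.
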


The theorem states that the limit dissipation functional is again
of $\cR\oplus\cR^{*}$-form with an effective dissipation potential
$\cR_{\eff}^{*}$. The effective dissipation potential $\cR_{\eff}^{*}$
consists again of two terms describing the diffusion and a coupling,
which forces the chemical potential $-\D\cE^{V}$ to equilibration.
This equilibration provides the microscopic equilibria of the densities
$\rho^{V}$ defining the slow manifold of the evolution.

In the Section \ref{sec:ProofGammaConvergence}, we will present the
detailed proof of this $\Gamma$-convergence result. In this section,
we discuss the effective gradient system and its induced gradient
flow equation. As we will see the associated gradient flow equation
can be understood as an evolution equation on $Q$ and also on a smaller
state space $\hat{Q}:=\Prob(\Omega)$ of coarse-grained variables.

As an immediate consequence, Theorem \ref{thm:GammaConvergenceDissipation}
implies that $(Q,\cE,\cR_{\eps}^{*})$ EDP-converges with tilting
to $(Q,\cE,\cR_{\eff}^{*})$.
\begin{thm}
\label{thm:EDP-convergence} Let $\cR_{\eff}^{*}$ be defined by \eqref{eq:Reff}.
Then the gradient system $(Q,\cE,\cR_{\eps}^{*})$ EDP-converges with
tilting to $(Q,\cE,\cR_{\eff}^{*})$.
\end{thm}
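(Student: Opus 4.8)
The plan is to show that Theorem~\ref{thm:EDP-convergence} follows almost immediately from Theorem~\ref{thm:GammaConvergenceDissipation} by unwinding the definition of EDP-convergence with tilting. Recall that EDP-convergence with tilting of $(Q,\cE,\cR_{\eps}^{*})$ to $(Q,\cE,\cR_{\eff}^{*})$ means that for \emph{every} tilt $\eta\in X^{*}$ the tilted family $(Q,\cE^{\eta},\cR_{\eps})$ EDP-converges (in the simple sense of Definition~\ref{def:EDPcvg}) to $(Q,\cE^{\eta},\cR_{\eff})$, with the crucial feature that the effective dissipation potential $\cR_{\eff}$ is the same for all tilts. In our situation the admissible tilts correspond precisely to linear perturbations of the energy by a two-component potential $V\in\mathrm{C}^{1}(\Omega,\R^{2})$, i.e. $\cE^{V}(\mu)=\cE(\mu)+\sum_{i}\int_{\Omega}V_{i}\,\d\mu_{i}$, as established in Section~\ref{subsec:TiltedGFE}.

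First I would verify the three conditions of Definition~\ref{def:EDPcvg} for each fixed $V$. Condition~(1), namely $\cE^{V}\Gammlim\cE^{V}$, is trivial here because the energy $\cE$ is $\eps$-independent and the linear tilt $\mu\mapsto\sum_{i}\int_{\Omega}V_{i}\,\d\mu_{i}$ is weak$^{*}$-continuous, so $\cE_{\eps}^{V}=\cE^{V}$ is constant in $\eps$ and its $\Gamma$-limit is itself. Condition~(2), the $\Gamma_{\mathrm E}$-convergence $\fD_{\eps}^{V}\GamLimE\fD_{0}^{V}$ conditioned to bounded energies, is exactly the content of Theorem~\ref{thm:GammaConvergenceDissipation}; in fact that theorem is stated for the full Mosco convergence $\fD_{\eps}^{V}\MoscoLimE\fD_{0}^{V}$, which implies the required $\Gamma_{\mathrm E}$-convergence. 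Condition~(3), that $\fD_{0}^{V}$ takes the dual-sum form $\int_{0}^{T}\{\cR_{\eff}(\mu,\dot\mu)+\cR_{\eff}^{*}(\mu,-\D\cE^{V}(\mu))\}\,\d t$, is read off directly from the form of $\fD_{0}^{V}$ given in~\eqref{eq:EffectiveDissipationFunctional} together with the explicit formulas for $\cR_{\eff}^{*}$ and $\cR_{\eff}$ in~\eqref{eq:Reff}.

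The one genuine point that requires attention, and which I would emphasize as the heart of the matter, is that the effective dissipation potential $\cR_{\eff}$ appearing in $\fD_{0}^{V}$ is \emph{independent of the tilt $V$}. This is the defining and nontrivial requirement distinguishing EDP-convergence with tilting from a tilt-by-tilt simple EDP-convergence. I would note that this independence is manifest from the statement of Theorem~\ref{thm:GammaConvergenceDissipation}: the effective potential $\cR_{\eff}^{*}(\mu,\xi)=\cR_{\diff}^{*}(\mu,\xi)+\chi_{\{\xi_{1}=\xi_{2}\}}(\xi)$ and its Legendre dual $\cR_{\eff}$ contain no dependence on $V$ whatsoever; the entire $V$-dependence of $\fD_{0}^{V}$ enters only through the slope term $\cR_{\eff}^{*}(\mu,-\D\cE^{V}(\mu))$, i.e. through the driving force, exactly as it should. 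This reflects the structural fact stressed in the introduction that the cosh-type dissipation potential $\cR_{\eps}^{*}$ does not depend explicitly on the stationary measure $w$, so that tilting the energy changes only the force $-\D\cE^{V}$ and never the geometry.

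Assembling these observations completes the proof: for every $V$ the tilted gradient system EDP-converges in the simple sense with a tilt-independent effective dissipation potential, which is precisely the definition of EDP-convergence with tilting. The main obstacle is therefore \emph{not} in this final argument at all but lies entirely in Theorem~\ref{thm:GammaConvergenceDissipation}, whose proof (deferred to Section~\ref{sec:ProofGammaConvergence}) must handle the degeneration of the primal potentials $\cR_{\eps}$, the compactness of the slow variable $\mu_{1}+\mu_{2}$, the equilibration $\alpha c_{1}^{\eps}-\beta c_{2}^{\eps}\to0$ onto the slow manifold, and the reconstruction of the vanishing reaction flux in the recovery sequence. Once that $\Gamma$-convergence is granted uniformly in $V$, Theorem~\ref{thm:EDP-convergence} is a direct corollary.
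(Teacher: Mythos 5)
Your argument is correct and coincides with the paper's own proof: both reduce the statement to Theorem \ref{thm:GammaConvergenceDissipation}, noting that $\cE$ is $\eps$-independent (so its $\Gamma$-limit is itself), that $\fD_{0}^{V}$ has the required $\cR\oplus\cR^{*}$ form, and that $\cR_{\eff}$ carries no dependence on the tilt $V$. Your write-up is simply a more explicit unwinding of the same two-line argument the paper gives.
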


\begin{proof}
The energy $\cE$ is $\eps$-independent and lsc. on $Q$. Hence,
it $\Gamma$-converges to itself. Theorem \ref{thm:GammaConvergenceDissipation}
implies that $\fD_{\eps}^{V}$ $\Gamma$-converges to $\fD_{0}^{V}$
and $\fD_{0}^{V}$ is of $\cR\oplus\cR^{*}$ structure, where the
effective dissipation potential $\cR_{\eff}$ is independent of the
tilts $\eta=V$. Hence, EDP-convergence with tilting is established.
\end{proof}

\subsection{Effective gradient flow equation}

In this section, we discuss the effective gradient flow equation that
is associated by the limit gradient structure $(Q,\cE,\cR_{\eff}^{*})$.
Similar to the space-independent situation in \cite{MieSte19ECLRS,MiPeSt20EDPCNLRS}
the limit gradient structure can also be equivalently understood as
a gradient structure on a smaller coarse-grained space of slow variables
$\hat{Q}$. In particular, we obtain an effective gradient flow equation
on the original state space $Q$ with a Lagrange multiplier ensuring
the projection on the slow manifold, and, moreover, an effective gradient
flow equation in coarse-grained variables. First, we discuss the effective
gradient flow equation with Lagrange multipliers, and secondly, the
coarse-grained gradient structure and its induced gradient flow equation.
Throughout the section the potential $V\in\mathrm{C}^{1}(\Omega,\R^{2})$
is fixed.

\subsubsection{Gradient flow equation with Lagrange multipliers}

For being brief, the calculations in this section are rather formal.
The effective dissipation potential $\cR_{\eff}^{*}=\cR_{\mathrm{diff}}^{*}+\chi_{\left\{ \xi_{1}=\xi_{2}\right\} }$
consists of two parts: the first describes the dissipation of the
evolution and the second provides the linear constraint of being on
the slow manifold and also the corresponding Lagrange multiplier.
The evolution equation is given by 
\[
\dot{\mu}\in\partial_{\xi}\cR_{\eff}^{*}(\mu,-V-\D\cE(\mu))=\partial_{\xi}\left\{ \cR_{\diff}^{*}(\mu,-V-\D\cE(\mu))+\chi_{\left\{ \xi_{1}=\xi_{2}\right\} }(-V-\D\cE(\mu))\right\} .
\]
Following \cite{EkeTem76CAVP}, the subdifferential of a sum is given
by the sum of the subdifferential, if one term is continuous, which
holds for the first term. For the second term, the subdifferential
of the characteristic function is only definite in its domain, i.e.
if 
\[
-V_{1}-\D\cE(\mu)_{1}=-V_{2}-\D\cE(\mu)_{2},
\]
which implies that $\mu=c\d x$ and that their densities satisfy the
relation $\frac{c_{1}}{\beta\e^{-V_{1}}}=\frac{c_{2}}{\alpha\e^{-V_{2}}}$
defining the linear slow manifold. Moreover, on its domain we have
for the subdifferential that $\partial\chi_{\left\{ \xi_{1}=\xi_{2}\right\} }={\cal M}(\Omega)\begin{pmatrix}1\\
-1
\end{pmatrix}$. Hence, we conclude that 
\begin{align*}
\dot{\mu} & \in\partial_{\xi}\left\{ \cR_{\diff}^{*}(\mu,-V-\D\cE(\mu))\right\} +{\cal M}(\Omega)\begin{pmatrix}1\\
-1
\end{pmatrix},\quad\frac{c_{1}}{\beta\e^{-V_{1}}}=\frac{c_{2}}{\alpha\e^{-V_{2}}}\ ,
\end{align*}
which implies the gradient flow equation on the slow manifold with
a Lagrange multiplier $\lambda=(\lambda_{1},\lambda_{2})$ for the
densities of the form

\begin{equation}
\begin{cases}
\dot{c_{1}}=\div\left\{ \delta_{1}\nabla c_{1}+\delta_{1}c_{1}\nabla V_{1}\right\} +\lambda_{1}(t,x)\\
\dot{c_{2}}=\div\left\{ \delta_{2}\nabla c_{2}+\delta_{2}c_{2}\nabla V_{2}\right\} +\lambda_{2}(t,x)
\end{cases},\quad\lambda_{1}+\lambda_{2}=0,\quad\frac{c_{1}}{\beta\e^{-V_{1}}}=\frac{c_{2}}{\alpha\e^{-V_{2}}}\ .\label{eq:GradientFlowEquationWithLagrangeMultiplier}
\end{equation}

\subsubsection{Coarse-grained gradient structure and its gradient flow equation}

Now, we discuss the effective gradient structure $(Q,\cE,\cR_{\eff}^{*})$
in the slow coarse-grained variables. To do this, we introduce the
coarse grained probability measure $\hat{\mu}=\mu_{1}+\mu_{2}$ on
$\Omega$ and the corresponding concentrations $\hat{c}:=c_{1}+c_{2}$.
Moreover, we define the equilibrated densities $\hat{\rho}^{V}=\rho_{1}^{V}=\rho_{2}^{V}$
and the coarse-grained stationary measure $\hat{w}^{V}=w_{1}^{V}+w_{2}^{V}$,
for which we get $\hat{c}=\rho_{1}^{V}w_{1}^{V}+\rho_{2}^{V}w_{2}^{V}=\hat{\rho}^{V}(w_{1}^{V}+w_{2}^{V})=\hat{\rho}^{V}\hat{w}^{V}$.
We introduce the coarse-grained diffusion coefficient $\hat{\delta}^{V}=\frac{\delta_{1}w_{1}^{V}+\delta_{2}w_{2}^{V}}{w_{1}^{V}+w_{2}^{V}}$.

With this notation, we may define the coarse-grained gradient structure
$(\hat{Q},\hat{\cE},\hat{\cR}^{*})$. On the state space $\hat{Q}=\Prob(\Omega)$,
we define

\begin{align}
\hat{\cR}^{*}(\hat{\mu},\hat{\xi}) & :=\cR_{\mathrm{eff}}^{*}\left(\left(\frac{w_{1}^{V}}{w_{1}^{V}+w_{2}^{V}}\hat{\mu},\frac{w_{2}^{V}}{w_{1}^{V}+w_{2}^{V}}\hat{\mu}\right),(\hat{\xi},\hat{\xi})\right)=\frac{1}{2}\int_{\Omega}\hat{\delta}^{V}|\nabla\hat{\xi}|^{2}\d\hat{\mu}\ ,\label{eq:CoarseGrainedGS}\\
\hat{\cE}(\hat{\mu}) & :=\cE^{V}\left(\frac{w_{1}^{V}}{w_{1}^{V}+w_{2}^{V}}\hat{\mu},\frac{w_{2}^{V}}{w_{1}^{V}+w_{2}^{V}}\hat{\mu}\right)\ .\nonumber 
\end{align}
Introducing the coarse-grained potential $\hat{V}=-\log(w_{1}^{V}+w_{2}^{V})-\log Z=-\log(\hat{w}^{V})-\log Z=-\log\left(w_{1}\e^{-V_{1}}+w_{2}\e^{-V_{2}}\right)$,
for which the exponential is given by the weighted arithmetic mean
of the exponentials $\e^{-V_{1}}$ and $\e^{-V_{2}}$, i.e. $\e^{-\hat{V}}=w_{1}\e^{-V_{1}}+w_{2}\e^{-V_{2}}$
(we used that $w_{1}+w_{2}=1$). Easy calculations show that the energy
has for the explicit form
\[
\hat{\cE}(\hat{\mu})=\int_{\Omega}\hat{\mu}\log\hat{\mu}+\hat{\mu}\hat{V}\d x.
\]
The coarse-grained dissipation functional is defined by 
\[
\hat{\fD}(\hat{\mu})=\int_{0}^{T}\hat{\cR}(\hat{\mu},\dot{\hat{\mu}})+\hat{\cR}^{*}(\hat{\mu},-\D\hat{\cE}(\hat{\mu}))\d t,
\]
which incorporates the tilt via the coarse-grained variables. Note,
that the coarse-grained dissipation potential $\hat{\cR}^{*}$ depends
explicitly on the tilt $V$ via the diffusion coefficient $\delta^{V}$.
This is not a contradiction to tilt-EDP convergence (Theorem \ref{thm:EDP-convergence}),
because in original variables the effective dissipation potential
\eqref{eq:EffectiveDissipationFunctional} is indeed independent of
the tilts. The tilts dependence of $\hat{\cR}^{*}$ originates from
the energy and tilt dependent slow manifold.

To relate the dissipation functional $\fD_{0}^{V}$ with the coarse
grained dissipation functional $\hat{\fD}$, we first show that also
an equilibration of the fluxes occurs. To do this the following convexity
property is important.
\begin{lem}
\label{lem:Convexity}Let $X$ a separable and reflexive Banach space
and let $\mathsf{F}:X\rightarrow\R_{\infty}$ be convex and lsc. Then
for the function $\widetilde{\mathsf{F}}:[0,\infty[\times X\to\R_{\infty}$,
we have
\begin{align*}
\widetilde{\mathsf{F}}\left(\sum_{i=1}^{I}a_{i},\sum_{i=1}^{I}x_{i}\right)\leq\sum_{i=1}^{I}\widetilde{\mathsf{F}}(a_{i},x_{i}).
\end{align*}
If $\mathsf{F}$ is strictly convex then equality holds if and only
if $(a_{i},x_{i})=(0,0)$ whenever $a_{i}=0$ or $x_{i}/a_{i}=x_{j}/a_{j}$
whenever $a_{i},a_{j}>0$. Moreover, if $\mathsf{F}(0)=0$, we have
the following monotonicity property
\[
\widetilde{\mathsf{F}}(a_{1},x)\leq\widetilde{\mathsf{F}}(a_{2},x),\ \ \mathrm{if}\ a_{1}\geq a_{2}.
\]
\end{lem}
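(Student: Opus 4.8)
The plan is to recognise $\widetilde{\mathsf{F}}$ as the \emph{perspective function} of $\mathsf{F}$ and to extract all three assertions from two elementary features of it: its biconjugate (dual) representation, which settles the subadditivity inequality in one line and uniformly across $a_{i}>0$ and $a_{i}=0$; and its primal form $a\,\mathsf{F}(x/a)$ together with the strict convexity of $\mathsf{F}$, which settles the equality characterisation through Jensen's inequality. The monotonicity will then drop out as an immediate corollary of the subadditivity.

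For the inequality I would start from the defining identity $\widetilde{\mathsf{F}}(a,x)=(a\mathsf{F}^{*})^{*}(x)=\sup_{\xi\in X^{*}}\{\la\xi,x\ra-a\mathsf{F}^{*}(\xi)\}$, valid for every $a\geq0$ once one uses the convention $0\cdot\mathsf{F}^{*}\equiv0$ (so that for $a=0$ the supremum is $\sup_{\xi}\la\xi,x\ra=\chi_{0}(x)$, consistent with the stated definition). Writing $A=\sum_{i}a_{i}$ and $X=\sum_{i}x_{i}$ and using linearity, the integrand splits as $\la\xi,X\ra-A\mathsf{F}^{*}(\xi)=\sum_{i}\bigl(\la\xi,x_{i}\ra-a_{i}\mathsf{F}^{*}(\xi)\bigr)$, whence $\widetilde{\mathsf{F}}(A,X)=\sup_{\xi}\sum_{i}(\cdots)\leq\sum_{i}\sup_{\xi}(\cdots)=\sum_{i}\widetilde{\mathsf{F}}(a_{i},x_{i})$, because the supremum of a sum never exceeds the sum of the suprema. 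The advantage of the dual form is that no case distinction between vanishing and positive $a_{i}$ is needed. (Equivalently one may observe that $\widetilde{\mathsf{F}}$ is jointly convex and positively $1$-homogeneous, hence sublinear.)

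For the equality statement under strict convexity I would pass to the primal form $\widetilde{\mathsf{F}}(a,x)=a\,\mathsf{F}(x/a)$ and split the indices into $Z=\{i:a_{i}=0\}$ and $S=\{i:a_{i}>0\}$. If some $i\in Z$ had $x_{i}\neq0$, the right-hand side would contain $\widetilde{\mathsf{F}}(0,x_{i})=\chi_{0}(x_{i})=\infty$; hence finiteness already forces $x_{i}=0$ on $Z$, these terms drop out, and the identity reduces to the sum over $S$. On $S$, writing $A_{S}=\sum_{i\in S}a_{i}$ and $\lambda_{i}=a_{i}/A_{S}$, one has $\frac{\sum_{i\in S}x_{i}}{A_{S}}=\sum_{i\in S}\lambda_{i}\frac{x_{i}}{a_{i}}$, so the claimed inequality is exactly $A_{S}$ times Jensen's inequality for $\mathsf{F}$ at the points $x_{i}/a_{i}$ with weights $\lambda_{i}$. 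Strict convexity of $\mathsf{F}$ then yields equality if and only if all these points coincide, i.e. $x_{i}/a_{i}=x_{j}/a_{j}$ for all $i,j\in S$, which together with $x_{i}=0$ on $Z$ is precisely the stated characterisation; multi-point Jensen handles general $I$ directly, with no induction needed. I expect the only genuinely delicate point to be the bookkeeping that separates this finite case from the degenerate ones (a term with $a_{i}=0$ and $x_{i}\neq0$, or a ratio $x_{i}/a_{i}$ outside $\mathrm{dom}\,\mathsf{F}$), where the inequality is strict against an infinite right-hand side.

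Finally, for the monotonicity with $\mathsf{F}(0)=0$ I would simply apply the already-proven subadditivity to the two pairs $(a_{2},x)$ and $(a_{1}-a_{2},0)$, which is admissible since $a_{1}-a_{2}\geq0$. This gives $\widetilde{\mathsf{F}}(a_{1},x)=\widetilde{\mathsf{F}}(a_{2}+(a_{1}-a_{2}),x+0)\leq\widetilde{\mathsf{F}}(a_{2},x)+\widetilde{\mathsf{F}}(a_{1}-a_{2},0)$, and $\widetilde{\mathsf{F}}(a_{1}-a_{2},0)=0$: for $a_{1}-a_{2}>0$ it equals $(a_{1}-a_{2})\mathsf{F}(0)=0$, and for $a_{1}-a_{2}=0$ it equals $\chi_{0}(0)=0$. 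Hence $\widetilde{\mathsf{F}}(a_{1},x)\leq\widetilde{\mathsf{F}}(a_{2},x)$, which completes the argument.
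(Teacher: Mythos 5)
Your proposal is correct. The equality characterisation (Jensen on the primal form $a\,\mathsf{F}(x/a)$ over the indices with $a_{i}>0$, after discarding the degenerate indices with $a_{i}=0$) and the monotonicity step (subadditivity applied to the pairs $(a_{2},x)$ and $(a_{1}-a_{2},0)$, using $\widetilde{\mathsf{F}}(a,0)=0$) coincide with the paper's argument. Where you genuinely diverge is the subadditivity inequality itself: the paper proves it directly on the primal side, first disposing of indices with $a_{i}=0$ by a case distinction and then invoking Jensen's inequality with weights $a_{i}/\sum_{i}a_{i}$, whereas you read it off from the dual representation $\widetilde{\mathsf{F}}(a,x)=\sup_{\xi}\{\la\xi,x\ra-a\,\mathsf{F}^{*}(\xi)\}$ and the fact that a supremum of a sum is at most the sum of the suprema. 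Your route is cleaner in that it treats $a_{i}=0$ and $a_{i}>0$ uniformly and exposes $\widetilde{\mathsf{F}}$ as sublinear (jointly convex and positively $1$-homogeneous); its only cost is that it leans on the Fenchel--Moreau identification of $(a\mathsf{F}^{*})^{*}$ with $a\,\mathsf{F}(\cdot/a)$, i.e.\ on $\mathsf{F}=\mathsf{F}^{**}$, which the paper's definition of $\widetilde{\mathsf{F}}$ already asserts, so you are entitled to it. Since the strict-convexity clause forces you back to the primal Jensen computation anyway, in the end you carry out the paper's argument plus the dual one; both are valid, and nothing is missing.
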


\begin{proof}
Let pairs $(a_{i},x_{i})$ for $i=1,\dots,I$ be given. If $a_{i}=0$,
then either $x_{i}=0$ and the claim has to be shown for $I-1$ -number
of pairs, or $x_{i}\neq0$ and the right-hand side is $\infty$ meaning
that the claim is trivial. So let us assume that $a_{i}>0$ for all
$i=1\dots,I$. Then $\widetilde{\mathsf{F}}(a_{i},x_{i})=a_{i}\mathsf{F}(x_{i}/a_{i})$
and the claim is equivalent to 
\begin{align*}
\sum_{i=1}^{I}\frac{a_{i}}{\sum_{i=1}^{I}a_{i}}\mathsf{F}(x_{i}/a_{i})\geq\mathsf{F}\left(\sum_{i=1}^{I}\frac{a_{i}}{\sum_{i=1}^{I}a_{i}}\frac{x_{i}}{a_{i}}\right)=\mathsf{F}\left(\frac{\sum_{i=1}^{I}x{}_{i}}{\sum_{i=1}^{I}a_{i}}\right),
\end{align*}
which holds since $\mathsf{F}$ is convex. If $F$ is strictly convex
then we immediately observe that whenever $a_{i},a_{j}>0$ we have
$\tfrac{x_{i}}{a_{i}}=\tfrac{x_{j}}{a_{j}},$ and whenever $a_{i}=0$
that also $x_{i}=0$.

To see the monotonicity property, we observe that $\widetilde{\mathsf{F}}(a,0)=0$
for all $a\geq0$. Hence, we have
\[
\widetilde{\mathsf{F}}(a_{1}+a_{2},x)\leq\widetilde{\mathsf{F}}(a_{1},x)+\widetilde{\mathsf{F}}(a_{2},0)=\widetilde{\mathsf{F}}(a_{1},x),
\]
which proves the claim.
\end{proof}
Recalling formula \eqref{eq:EffectiveDissipationFunctional} of the
effective dissipation functional $\fD_{0}^{V}$ and using the above
lemma, we observe that the velocity part of the dissipation functional
$\fD_{0}^{V}$ can now be estimated. In particular, we will see that
the limit dissipation functional $\fD_{0}^{V}$ can be equivalently
expressed in coarse-grained variables $(\hat{\mu},\hat{J})$ by using
that an equilibration of concentrations also provides an equilibration
of the corresponding fluxes. In the reconstruction strategy in Section
\ref{subsec:ConstructionRecoverySequence} this equilibration is explicitly
used (see \ref{eq:ReconstructionFluxes} and \eqref{eq:Reconstruction}).
\begin{prop}
\label{prop:DissipationFunctionalInCoarseGrainedVariables}Let $\mu\in\mathrm{AC}([0,T],Q)$
with $\fD_{0}^{V}(\mu)<\infty$ and $\mathrm{ess\,\sup}_{t\in[0,T]}\cE(\mu(t))<\infty$.
Then the following holds:
\begin{enumerate}
\item We have $\fD_{0}^{V}(\mu)=\hat{\fD}(\hat{\mu})$ where $\hat{\mu}=\mu_{1}+\mu_{2}$
and
\begin{align*}
\mathsf{\hat{\fD}}(\hat{\mu}) & =\int_{0}^{T}\hat{\cR}(\hat{\mu},\dot{\hat{\mu}})+\hat{\cR}^{*}(\hat{\mu},-\D\hat{\cE}(\hat{\mu}))\ \d t\\
 & =\inf_{\hat{J}:\dot{\hat{\mu}}+\div\hat{J}=0}\left\{ \int_{0}^{T}\int_{\Omega}\widetilde{\mathsf{Q}}(\hat{\delta}^{V}\hat{c},\hat{J})+\frac{\hat{\delta}\hat{w}^{V}}{2}\frac{|\nabla\hat{\rho}^{V}|^{2}}{\hat{\rho}^{V}}\ \d x\d t\right\} .
\end{align*}
\item The chain-rule holds for $[0,T]\ni t\mapsto\hat{\cE}(\hat{\mu}(t))\in\R$,
i.e. we have 
\[
\frac{\d}{\d t}\hat{\cE}(\hat{\mu}(t))=\langle\D\hat{\cE}(\hat{\mu}(t)),\dot{\hat{\mu}}(t)\rangle\ .
\]
\item The gradient flow equation of the gradient system $(\hat{Q},\hat{\cE},\hat{\cR}^{*})$
is given by
\begin{equation}
\dot{\hat{c}}=-\div\left(\hat{\delta}^{V}\hat{c}\,\nabla\left(-\D\hat{\cE}(\hat{\mu})\right)\right)=\div\left(\hat{\delta}^{V}\nabla\hat{c}+\hat{\delta}^{V}\hat{c}\nabla\hat{V}\right),\label{eq:CoarseGrainedGradienFlowEquation}
\end{equation}
with the potential $\hat{V}=-\log\hat{w}^{V}$ and stationary measure
$\hat{w}^{V}$.
\end{enumerate}
\end{prop}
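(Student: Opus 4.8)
The plan is to treat the three claims in order, with~(1) being the analytical heart and~(2)--(3) following as consequences. For item~(1) I would first use that finiteness of $\fD_0^V(\mu)$ forces $\mu$ onto the slow manifold: since the term $\chi_{\{\xi_1=\xi_2\}}$ in $\cR_{\eff}^*$ must be finite for a.e. $t$, one has $(-\D\cE^V(\mu))_1=(-\D\cE^V(\mu))_2$, which is equivalent to $\rho_1^V=\rho_2^V=:\hat\rho^V$. Together with the energy bound this determines $\mu$ completely from $\hat\mu=\mu_1+\mu_2$ via $\mu_j=\frac{w_j^V}{w_1^V+w_2^V}\hat\mu$. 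The functional then splits into a velocity part $\int_0^T\cR_{\eff}(\mu,\dot\mu)\,\d t$ and a slope part $\int_0^T\cR_{\eff}^*(\mu,-\D\cE^V(\mu))\,\d t$, which I would reduce to coarse-grained form separately.

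The slope part reduces by direct substitution: starting from the earlier expression for $\cR_{\diff}^*(\mu,-\D\cE^V(\mu))$, the constraint $\rho_1^V=\rho_2^V=\hat\rho^V$ and the identity $\delta_1 w_1^V+\delta_2 w_2^V=\hat\delta^V\hat w^V$ collapse the sum over $j$ to $\tfrac12\int_\Omega\hat\delta^V\hat w^V\frac{|\nabla\hat\rho^V|^2}{\hat\rho^V}\,\d x$, which is exactly $\hat\cR^*(\hat\mu,-\D\hat\cE(\hat\mu))$. The velocity part is where Lemma~\ref{lem:Convexity} enters. Setting $\hat J=J_1+J_2$ and adding the two rows of the effective $(\mathrm{gCE})$ while using $u_1+u_2=0$ produces the single coarse-grained continuity equation $\dot{\hat c}+\div\hat J=0$. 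Since on the slow manifold $\delta_1 c_1+\delta_2 c_2=\hat\rho^V(\delta_1 w_1^V+\delta_2 w_2^V)=\hat\delta^V\hat c$, Lemma~\ref{lem:Convexity} with $a_j=\delta_j c_j$ gives $\sum_j\widetilde{\mathsf Q}(\delta_j c_j,J_j)\ge\widetilde{\mathsf Q}(\hat\delta^V\hat c,\hat J)$, hence $\int_0^T\cR_{\eff}(\mu,\dot\mu)\,\d t$ is bounded below by the coarse-grained velocity integral. For the reverse inequality I would take an optimal $\hat J$ and reconstruct the equilibrated fluxes $J_j=\frac{\delta_j c_j}{\hat\delta^V\hat c}\hat J$ with $u_j=\dot c_j+\div J_j$; then $J_1+J_2=\hat J$, the equality case of Lemma~\ref{lem:Convexity} yields $\sum_j\widetilde{\mathsf Q}(\delta_j c_j,J_j)=\widetilde{\mathsf Q}(\hat\delta^V\hat c,\hat J)$, and $u_1+u_2=\dot{\hat c}+\div\hat J=0$ shows admissibility. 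Combining the two parts gives $\fD_0^V(\mu)=\hat\fD(\hat\mu)$ in the stated form; this is precisely the flux equilibration exploited later in Section~\ref{subsec:ConstructionRecoverySequence} and in~\eqref{eq:ReconstructionFluxes}.

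Item~(2) I would obtain by invoking the chain rule for the relative Boltzmann entropy along absolutely continuous curves in $(\Prob(\Omega),d_{\mathcal{W}_2})$, as in \cite{AmGiSa05GFMS}: the finiteness of $\hat\fD(\hat\mu)$ controls both the metric speed of $\hat\mu$ and the Fisher-information/slope term $\hat\cR^*(\hat\mu,-\D\hat\cE(\hat\mu))$, which are exactly the ingredients ensuring that $t\mapsto\hat\cE(\hat\mu(t))$ is absolutely continuous with $\frac{\d}{\d t}\hat\cE(\hat\mu)=\langle\D\hat\cE(\hat\mu),\dot{\hat\mu}\rangle$. Item~(3) is then a direct computation of the rate equation $\dot{\hat\mu}\in\partial_{\hat\xi}\hat\cR^*(\hat\mu,-\D\hat\cE(\hat\mu))$ for the quadratic Wasserstein-type potential $\hat\cR^*(\hat\mu,\hat\xi)=\tfrac12\int_\Omega\hat\delta^V|\nabla\hat\xi|^2\,\d\hat\mu$: its subdifferential gives $\dot{\hat c}=-\div(\hat\delta^V\hat c\,\nabla(-\D\hat\cE(\hat\mu)))$, and inserting $\D\hat\cE(\hat\mu)=\log\hat c+1+\hat V$ and simplifying $\hat c\nabla\log\hat c=\nabla\hat c$ produces $\dot{\hat c}=\div(\hat\delta^V\nabla\hat c+\hat\delta^V\hat c\nabla\hat V)$, which is~\eqref{eq:CoarseGrainedGradienFlowEquation}.

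The step I expect to be the main obstacle is the rigorous justification of item~(1) at the level of measures rather than smooth densities: both the convex/monotone estimate and the reconstruction $J_j=\frac{\delta_j c_j}{\hat\delta^V\hat c}\hat J$ must be carried out for Borel fluxes $J_j\in\mathcal{M}([0,T]\times\Omega,\R^d)$ with $\L^1$-densities (as furnished by Lemma~\ref{lem:L1BoundBySuperlinerarGrowth}), ensuring that $\hat\delta^V\hat c>0$ almost everywhere so the quotient is well defined and that the coarse-grained continuity equation holds distributionally together with the no-flux condition. Once the slow-manifold identification and the entropy chain rule are in place, the remaining computations in~(2)--(3) are comparatively routine.
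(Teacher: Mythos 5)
Your proposal is correct and follows essentially the same route as the paper: finiteness of $\fD_{0}^{V}$ forces $\rho_{1}^{V}=\rho_{2}^{V}$, the slope part collapses by direct substitution using $\delta_{1}w_{1}^{V}+\delta_{2}w_{2}^{V}=\hat{\delta}^{V}\hat{w}^{V}$, the velocity part is handled by Lemma \ref{lem:Convexity} for the lower bound and by reconstructing the equilibrated fluxes $J_{j}=\frac{\delta_{j}w_{j}^{V}}{\delta_{1}w_{1}^{V}+\delta_{2}w_{2}^{V}}\hat{J}$ (equivalently $\frac{\delta_{j}c_{j}}{\hat{\delta}^{V}\hat{c}}\hat{J}$) for equality, exactly as in \eqref{eq:Reconstruction}. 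The only cosmetic difference is that the paper cites the chain rule of \cite[Lem.\ 4.8]{FreLie19EDTS} (following \cite[Prop.\ 2.4]{MiRoSa13NADN}) rather than \cite{AmGiSa05GFMS}, but the underlying time-regularization/convexity argument is the same.
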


Equation \eqref{eq:CoarseGrainedGradienFlowEquation} shows that the
coarse-grained gradient flow equation induced by $(\hat{Q},\hat{\cE},\hat{\cR}^{*})$
is a drift-diffusion equation of the coarse-grained concentration
$\hat{c}$ with mixed diffusion constant $\hat{\delta}^{V}$. In particular,
in the tilt free case we have $\hat{\delta}^{V=\mathrm{const}}=\frac{\beta\delta_{1}+\alpha\delta_{2}}{\alpha+\beta}$,
and we recover the result of \cite{BotHil02RDFRR}.
\begin{proof}
To prove Part 1, we first observe that the bounded energy and dissipation
for the trajectory $\mu$ implies that we have $\frac{c_{1}}{w_{1}^{V}}=\frac{c_{2}}{w_{2}^{V}}$
a.e. in $[0,T]\times\Omega$. Using $\hat{c}=c_{1}+c_{2}$ for the
densities, we get $\hat{c}=\frac{w_{1}^{V}+w_{2}^{V}}{w_{1}^{V}}c_{1}=\frac{w_{1}^{V}+w_{2}^{V}}{w_{2}^{V}}c_{2}$.
The Fisher information ${\cal S}_{0}^{V}(\mu):=\cR_{\mathrm{eff}}^{*}(\mu,-V-\D\cE(\mu))$
has the form
\begin{align}
{\cal S}_{0}^{V}(\mu) & =\frac{1}{2}\int_{\Omega}\sum_{j=1}^{2}\delta_{j}w_{j}^{V}\frac{\left|\nabla\rho_{j}^{V}\right|^{2}}{\rho_{j}^{V}}\d x=\frac{1}{2}\int_{\Omega}\left(\frac{\delta_{1}w_{1}^{V}+\delta_{2}w_{2}^{V}}{w_{1}^{V}+w_{2}^{V}}\right)\left(w_{1}^{V}+w_{2}^{V}\right)\frac{|\nabla\hat{\rho}^{V}|^{2}}{\hat{\rho}^{V}}\d x\label{eq:LimitFisherInformation}\\
 & =\frac{1}{2}\int_{\Omega}\hat{\delta}^{V}\hat{w}^{V}\frac{|\nabla\hat{\rho}^{V}|^{2}}{\hat{\rho}^{V}}\d x=\hat{\cR}^{*}(\hat{\mu},-\D\hat{\cE}(\hat{\mu})).
\end{align}

Lemma \ref{lem:Convexity} provides that also an equilibration of
the fluxes occurs. Indeed, defining the coarse-grained flux $\hat{J}=J_{1}+J_{2}$,
we conclude
\begin{align}
\frac{|J_{1}|^{2}}{\delta_{1}c_{1}}+\frac{|J_{2}|^{2}}{\delta_{2}c_{2}}\geq\frac{|J_{1}+J_{2}|^{2}}{\delta_{1}c_{1}+\delta_{2}c_{2}}=\frac{|\hat{J}|^{2}}{\frac{\delta_{1}w_{1}^{V}+\delta_{2}w_{2}^{V}}{w_{1}^{V}+w_{2}^{V}}\hat{c}}=\frac{|\hat{J}|^{2}}{\hat{\delta}\hat{c}},\label{eq:EstimateCoarseGrainedFluxes}
\end{align}
where $\hat{\delta}^{V}:=\frac{\delta_{1}w_{1}^{V}+\delta_{2}w_{2}^{V}}{w_{1}^{V}+w_{2}^{V}}$.
Equality holds if and only if $(J_{1},c_{1})=0$ or $(J_{2},c_{2})=0$
or $J_{1}/\delta_{1}c_{1}=J_{2}/\delta_{2}c_{2}=\hat{J}/\hat{\delta}^{V}\hat{c}$.
The last condition is equivalent to 
\begin{equation}
\hat{J}=\frac{\delta_{1}w_{1}^{V}}{\delta_{1}w_{1}^{V}+\delta_{2}w_{2}^{V}}J_{1}=\frac{\delta_{2}w_{2}^{V}}{\delta_{1}w_{1}^{V}+\delta_{2}w_{2}^{V}}J_{2}\ ,\label{eq:ReconstructionFluxes}
\end{equation}
which provides an explicit formula for the coarse-grained diffusion
flux.

For the dissipation functional that means 
\begin{align}
\fD_{0}^{V}(\mu) & =\inf_{(c,J,b)\in\mathrm{(gCE)}}\int_{0}^{T}\left\{ \int_{\Omega}\sum_{j=1}^{2}\widetilde{\mathsf{Q}}(\delta_{j}c_{j},J_{j})\d x+\frac{1}{2}\int_{\Omega}\sum_{j=1}^{2}\delta_{j}w_{j}^{V}\frac{|\nabla\rho_{j}|^{2}}{\rho_{j}}\d x\right\} \d t\nonumber \\
 & \geq\inf_{\dot{\hat{c}}+\div\hat{J}=0}\int_{0}^{T}\left\{ \int_{\Omega}\widetilde{\mathsf{Q}}\left(\hat{\delta}\hat{c},\hat{J}\right)\d x+\frac{1}{2}\int_{\Omega}\hat{\delta}\hat{w}^{V}\frac{|\nabla\hat{\rho}|^{2}}{\hat{\rho}}\d x\right\} \d t.\label{eq:CoarseGrainingEstimateDissipationFunctional}
\end{align}
To prove equality, we first observe that $\hat{J},J_{1},J_{2}$ satisfy
the same boundary conditions. Moreover, the explicitly derived reaction
flux $b_{1},b_{2}$ from \eqref{eq:Reconstruction} shows that the
reconstructed fluxes $(J_{1},J_{2})$ from coarse-grained flux $\hat{J}$
is admissible. Hence, we obtain equality
\[
\fD_{0}^{V}(\mu)=\inf_{\dot{\hat{c}}+\div\hat{J}=0}\int_{0}^{T}\left\{ \frac{1}{2}\int_{\Omega}\mathsf{Q}\left(\hat{\delta}\hat{c},\hat{J}\right)\d x+\frac{1}{2}\int_{\Omega}\hat{\delta}\hat{w}^{V}\frac{|\nabla\hat{\rho}|^{2}}{\hat{\rho}}\d x\right\} \d t,
\]
which proves the first part.

For the chain-rule in Part 2, we refer to the proof \cite[Lem 4.8]{FreLie19EDTS}
since we consider the pure diffusive situation. The proof uses a time-regularization
argument and convexity of the Fisher-information following the ideas
of \cite[Prop. 2.4]{MiRoSa13NADN}.

For Part 3, we compute the evolution equation that is induced by the
gradient system is $(\hat{Q},\hat{\cE},\hat{\cR}^{*})$. We have
\begin{align*}
\partial_{\hat{\xi}}\hat{\cR}^{*}(\hat{\mu},\hat{\xi}) & =-\div\left(\hat{\delta}^{V}\hat{c}\nabla\hat{\xi}\right),\\
\D\hat{\cE}(\hat{\mu}) & =\log\hat{\mu}+1-\log\hat{w}^{V}-\log Z,\\
\nabla\left(-\D\hat{\cE}(\hat{\mu})\right) & =-\frac{\nabla\hat{\mu}}{\hat{\mu}}+\frac{\nabla\hat{w}^{V}}{\hat{w}^{V}}=-\frac{\nabla\hat{\mu}}{\hat{\mu}}-\nabla\hat{V},
\end{align*}
which results in
\[
\hat{c}=-\div\left(\hat{\delta}^{V}\hat{c}\,\nabla\left(-\D\mathsf{E}(\hat{\mu})\right)\right)=\div\left(\hat{\delta}^{V}\nabla\hat{c}+\hat{\delta}^{V}\hat{c}\nabla\hat{V}\right).
\]
\end{proof}
Note that the coarse-grained gradient flow equation \eqref{eq:CoarseGrainedGradienFlowEquation}
is equivalent to the gradient flow equation with Lagrange multipliers
\eqref{eq:GradientFlowEquationWithLagrangeMultiplier}. Indeed, adding
both equations in \eqref{eq:GradientFlowEquationWithLagrangeMultiplier}
together and using that the original concentrations can be expressed
by the coarse-grained concentrations via
\begin{equation}
c_{i}=\frac{w_{i}\e^{-V_{i}}}{w_{1}\e^{-V_{1}}+w_{2}\e^{-V_{2}}}\hat{c},\label{eq:RelationCoarseGrainedConcentrationsOriginalConcentrations}
\end{equation}
the coarse-grained gradient flow equation \eqref{eq:CoarseGrainedGradienFlowEquation}
with the drift term $\hat{\delta}^{V}\hat{c}\nabla\hat{V}$ can be
readily derived. Conversely, using \eqref{eq:RelationCoarseGrainedConcentrationsOriginalConcentrations},
we see that $c=(c_{1},c_{2})$ are on the slow manifold and satisfy
\eqref{eq:GradientFlowEquationWithLagrangeMultiplier}. The corresponding
Lagrange multipliers $\lambda=(\lambda_{1},\lambda_{2})$ can be explicitly
calculated. Introducing the difference of the diffusion constants
$\overline{\delta}=\delta_{1}-\delta_{2}$ and the potentials $\overline{V}=V_{1}-V_{2}$,
we have 
\begin{align*}
\lambda_{1} & =\frac{w_{2}\e^{-V_{2}}}{w_{1}\e^{-V_{1}}+w_{2}\e^{-V_{2}}}\left(-\overline{\delta}\Delta c_{1}+\left(\delta_{2}\nabla\overline{V}-\overline{\delta}\nabla V_{1}\right)\cdot\nabla c_{1}+c_{1}\left\{ \delta_{2}\nabla\overline{V}\,\nabla V_{1}-\overline{\delta}\Delta V_{1}\right\} \right)\\
\lambda_{2} & =\frac{w_{1}\e^{-V_{1}}}{w_{1}\e^{-V_{1}}+w_{2}\e^{-V_{2}}}\left(\overline{\delta}\Delta c_{2}+\left(-\delta_{1}\nabla\overline{V}+\overline{\delta}\nabla V_{2}\right)\cdot\nabla c_{2}+c_{2}\left\{ -\delta_{1}\nabla\overline{V}\,\nabla V_{2}+\overline{\delta}\Delta V_{2}\right\} \right).
\end{align*}
We observe that the Lagrange multiplier $\lambda_{i}$ has the same
regularity as the right-hand side of the evolution of $c_{i}$. Moreover,
both evolution equations are completely uncoupled but contain a linear
annihilation/creation term, which depends on the potential $V=(V_{1},V_{2})$
and the diffusion coefficient $\delta=\left(\delta_{1},\delta_{2}\right)$.
A lengthy calculation shows that indeed we have $\lambda_{1}+\lambda_{2}=0$.

\section{Proof of $\Gamma$-convergence\label{sec:ProofGammaConvergence}}

In this section we prove the $\Gamma$-convergence result of Theorem
\ref{thm:GammaConvergenceDissipation}. As usual, we prove $\Gamma$-convergence
in three steps: First deriving compactness, secondly establishing
the liminf-estimate by exploiting the compactness, thirdly constructing
the recovery sequence for the limsup-estimate.

In the following the next lemma will be useful.
\begin{lem}
\label{lem:L1BoundBySuperlinerarGrowth}Let $\mathsf{F}:\R^{k}\rightarrow[0,\infty[$
be a convex, lsc. function of superlinear growth, i.e. $\mathsf{F}(r)/r\to\infty$
as $r\to\infty$. Then there is a constant $k_{\mathsf{F}}>0$ such
that for any measurable functions $W:\Omega\rightarrow\R^{k}$ and
$\rho:\Omega\rightarrow\R_{\geq0}$ it holds 
\[
\int_{\Omega}|W|\d x\leq\int_{\Omega}\widetilde{\mathsf{F}}(\rho,W)\d x+k_{\mathsf{F}}\int_{\Omega}\rho\d x\ .
\]
\end{lem}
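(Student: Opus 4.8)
The plan is to reduce the claim to a \emph{pointwise} inequality and then integrate. Concretely, I would show that for the fixed constant
\[
k_{\mathsf{F}}:=\sup_{y\in\R^{k}}\big(|y|-\mathsf{F}(y)\big)
\]
(whose finiteness is the only substantial point, see below) one has, for every $x\in\Omega$,
\[
|W(x)|\;\leq\;\widetilde{\mathsf{F}}\big(\rho(x),W(x)\big)+k_{\mathsf{F}}\,\rho(x).
\]
Integrating this inequality over $\Omega$ then yields the assertion immediately. So everything is local in $x$, and I may argue at a single point, writing $a=\rho(x)\geq0$ and $w=W(x)\in\R^{k}$.

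For the pointwise inequality I would distinguish the two branches in the definition of $\widetilde{\mathsf{F}}$. If $a=0$, then either $w=0$, in which case both sides vanish, or $w\neq0$, in which case $\widetilde{\mathsf{F}}(0,w)=\chi_{\{0\}}(w)=\infty$ and the inequality is trivial. If $a>0$, then $\widetilde{\mathsf{F}}(a,w)=a\,\mathsf{F}(w/a)$, and dividing the desired inequality by $a$ and substituting $y=w/a$ reduces everything to the scaling-invariant scalar statement
\[
|y|\;\leq\;\mathsf{F}(y)+k_{\mathsf{F}}\qquad\text{for all }y\in\R^{k},
\]
which is precisely the definition of $k_{\mathsf{F}}$ as a supremum. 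Thus the entire lemma collapses to showing $k_{\mathsf{F}}<\infty$.

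The main (and essentially only) obstacle is therefore the finiteness of $k_{\mathsf{F}}$, and here I would use Legendre duality. Writing $|y|=\sup_{|\xi|\leq1}\langle\xi,y\rangle$ and exchanging the two suprema, I get
\[
k_{\mathsf{F}}=\sup_{y}\sup_{|\xi|\leq1}\big(\langle\xi,y\rangle-\mathsf{F}(y)\big)=\sup_{|\xi|\leq1}\sup_{y}\big(\langle\xi,y\rangle-\mathsf{F}(y)\big)=\sup_{|\xi|\leq1}\mathsf{F}^{*}(\xi).
\]
It remains to check that $\mathsf{F}^{*}$ is finite on the closed unit ball. This is exactly where the superlinear growth of $\mathsf{F}$ enters: for fixed $\xi$ with $|\xi|\leq1$ the bound $\langle\xi,y\rangle\leq|y|$ together with $\mathsf{F}(y)/|y|\to\infty$ forces $\langle\xi,y\rangle-\mathsf{F}(y)\to-\infty$ as $|y|\to\infty$, so the supremum defining $\mathsf{F}^{*}(\xi)$ is attained on a bounded set and is finite. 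Hence $\mathsf{F}^{*}$ is a convex function finite on all of $\R^{k}$, therefore continuous, and in particular bounded on the compact unit ball, giving $k_{\mathsf{F}}=\sup_{|\xi|\leq1}\mathsf{F}^{*}(\xi)<\infty$. Tracing back through the reduction then completes the proof; note that the resulting constant is uniform in $W$ and $\rho$ and depends only on $\mathsf{F}$, as claimed.
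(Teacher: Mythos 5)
Your proof is correct and follows essentially the same route as the paper's: both reduce the claim to the pointwise consequence of superlinearity that $|y|-\mathsf{F}(y)$ is bounded above on $\R^{k}$, and then integrate. The paper realizes this by partitioning $\Omega$ into the sets where $\rho=0$, where $\tfrac{1}{\rho}|W|\leq\mathsf{F}(\tfrac{1}{\rho}W)$, and where the reverse inequality holds (on the last set superlinearity gives $|W|/\rho\leq k_{\mathsf{F}}$ directly), while your Fenchel-duality identification $k_{\mathsf{F}}=\sup_{|\xi|\leq1}\mathsf{F}^{*}(\xi)$ is a clean, equally valid way of producing the same constant.
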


\begin{proof}
Let $W$ and $\rho$ be given. We define three measurable subsets
of $\Omega$:
\[
\Omega_{0}=\{x:\rho(x)=0\},\ \Omega_{1}=\{x:\rho\neq0,\tfrac{1}{\rho}|W|\leq\mathsf{F}(\tfrac{1}{\rho}W)\},\ \Omega_{2}=\{x:\rho\neq0,\tfrac{1}{\rho}|W|>\mathsf{F}(\tfrac{1}{\rho}W)\}\ .
\]
Since $\mathsf{F}$ is superlinear, there is a constant $k_{\mathsf{F}}>0$
such that on $\Omega_{2}$ it holds $W/\rho\leq k_{\mathsf{F}}$.
Hence we can estimate
\begin{align*}
\int_{\Omega}|W|\d x & \leq\int_{\Omega_{0}}|W|\d x+\int_{\Omega_{1}}\frac{|W|}{\rho}\rho\d x+\int_{\Omega_{2}}\frac{|W|}{\rho}\rho\d x\\
 & \leq\int_{\Omega_{0}}\widetilde{\mathsf{F}}(\rho,W)\d x+\int_{\Omega_{1}}\mathsf{F}(\tfrac{1}{\rho}W)\rho\d x+k_{\mathsf{F}}\int_{\Omega_{2}}\rho\d x\\
 & \leq\int_{\Omega}\widetilde{\mathsf{F}}(\rho,W)\d x+k_{\mathsf{F}}\int_{\Omega}\rho\d x\ .
\end{align*}
\end{proof}
Moreover, we need the following classical lemma. It guarantees the
necessary regularity for the limits, and moreover, it provides the
desired liminf-estimate.
\begin{lem}[AGS05-Lemma 9.4.3, \cite{AmGiSa05GFMS}]
 \label{lem:LiminfEstimateAGS}Let $F:[0,\infty[\rightarrow[0,\infty]$
be a proper, lsc, convex function with superlinear growth. We define
the related functional
\[
\mathcal{F}(\mu,\gamma)=\begin{cases}
\int_{A}F(\frac{\d\mu}{\d\gamma})\d\gamma, & \mathrm{if}\ \ \mu\ll\gamma,\\
\infty, & \mathrm{otherwise}.
\end{cases}
\]
Let $\mu^{\eps},\gamma^{\eps}\in\mathrm{Prob}(A)$ be two sequences
with $\mu^{\eps}\wstarlim\mu^{0}$ and $\gamma^{\eps}\wstarlim\gamma^{0}$.
Then
\[
\liminf_{\eps\rightarrow0}\mathcal{F}(\mu^{\eps},\gamma^{\eps})\geq\mathcal{F}(\mu^{0},\gamma^{0}).
\]
In particular, if the left-hand side is finite then for the limits
it holds $\mu^{0}\ll\gamma^{0}.$
\end{lem}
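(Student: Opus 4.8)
The statement is a joint lower-semicontinuity result for an integral functional of two measures, and the natural route is convex duality. The plan is to represent $\mathcal{F}$ as a supremum of affine functionals that are individually weak-$*$ continuous, so that lower semicontinuity becomes automatic. Concretely, since $F$ is convex, lsc and superlinear, its Legendre conjugate $F^{*}(t)=\sup_{s\geq0}(st-F(s))$ is finite for every $t\in\R$ and hence, being a finite convex function on $\R$, continuous. I claim the dual representation
\[
\mathcal{F}(\mu,\gamma)=\sup_{\phi\in\mathrm{C}_b(A)}\Big\{\int_{A}\phi\,\d\mu-\int_{A}F^{*}(\phi)\,\d\gamma\Big\}.
\]
Granting this, the lemma follows at once: for fixed $\phi$ the functional $G_{\phi}(\mu,\gamma):=\int_A\phi\,\d\mu-\int_A F^{*}(\phi)\,\d\gamma$ is weak-$*$ continuous, because $\phi$ and $F^{*}(\phi)$ are bounded and continuous on the compact set $A$; hence $\liminf_{\eps}\mathcal{F}(\mu^{\eps},\gamma^{\eps})\geq\liminf_{\eps}G_{\phi}(\mu^{\eps},\gamma^{\eps})=G_{\phi}(\mu^{0},\gamma^{0})$, and taking the supremum over $\phi$ together with the representation at $(\mu^{0},\gamma^{0})$ yields the liminf-estimate. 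The final assertion (that $\mu^{0}\ll\gamma^{0}$ when the liminf is finite) is then immediate from the definition of $\mathcal{F}$.

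It remains to establish the representation. The inequality ``$\geq$'' is elementary: if $\mu\ll\gamma$ with density $u=\d\mu/\d\gamma\ge0$, then the Fenchel--Young inequality $\phi(x)u(x)\le F(u(x))+F^{*}(\phi(x))$, integrated against $\gamma$, gives $G_{\phi}(\mu,\gamma)\le\int_A F(u)\,\d\gamma=\mathcal{F}(\mu,\gamma)$; if $\mu\not\ll\gamma$ there is nothing to prove since $\mathcal{F}=\infty$.

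The content is the reverse inequality ``$\leq$'', which I would split into two cases. When $\mu\ll\gamma$, one must recover the density integral by choosing near-optimal test functions $\phi\approx F'(u)$. I would use a measurable-selection argument to pick a Borel function $p(x)\in\partial F(u(x))$ realising the Fenchel equality $p(x)u(x)-F^{*}(p(x))=F(u(x))$, truncate it to $p_n=\max(-n,\min(n,p))$, and invoke Lusin's theorem to approximate $p_n$ by functions in $\mathrm{C}_b(A)$; one then passes to the limit by a convergence argument (dominated or Fatou, according as $\int_A F(u)\,\d\gamma$ is finite or not) to force $\sup_\phi G_\phi\ge\int_A F(u)\,\d\gamma$. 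When $\mu\not\ll\gamma$, I use the Lebesgue decomposition $\mu=u\gamma+\mu^{s}$ with $\mu^{s}\perp\gamma$, $\mu^{s}\neq0$, choose a Borel set $B$ carrying the singular mass $m:=\mu^{s}(B)>0$ with $\gamma(B)=0$, and exploit superlinearity (equivalently $\mathrm{dom}\,F^{*}=\R$): for each $t>0$ outer regularity produces an open $U_t\supset B$ with $\gamma(U_t)$ so small that $|F^{*}(t)|\,\gamma(U_t)\le1$, and Urysohn's lemma a $\psi_t\in\mathrm{C}_b(A)$ with $\one_B\le\psi_t\le\one_{U_t}$; then $G_{t\psi_t}(\mu,\gamma)\ge t\,m-|F^{*}(0)|-1\to\infty$, so $\mathcal{F}(\mu,\gamma)=\infty=\sup_\phi G_\phi$.

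The main obstacle is this reverse inequality, and within it the absolutely continuous case: turning the pointwise Fenchel-optimal subgradient $F'(u)$, which is in general neither bounded nor continuous, into admissible test functions $\phi\in\mathrm{C}_b(A)$ while keeping the loss in $G_\phi$ controlled. Truncation plus Lusin's theorem together with the convergence step is the delicate book-keeping; once the representation is in hand, the lower-semicontinuity conclusion is purely formal.
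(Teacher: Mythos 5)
The paper does not actually prove this lemma: it is quoted verbatim from \cite{AmGiSa05GFMS} (Lemma 9.4.3) and used as a black box, so there is no internal proof to compare against. Your proposal supplies a correct, self-contained proof, and it follows the same route as the cited reference: the joint weak-$*$ lower semicontinuity is reduced, via the duality formula
\[
\mathcal{F}(\mu,\gamma)=\sup_{\phi\in\mathrm{C}_b(A)}\Bigl\{\int_{A}\phi\,\d\mu-\int_{A}F^{*}(\phi)\,\d\gamma\Bigr\},
\]
to the trivial lower semicontinuity of a supremum of weak-$*$ continuous affine functionals. The ``$\geq$'' direction by Fenchel--Young, the blow-up argument for the singular part (using $\mathrm{dom}\,F^{*}=\R$, outer regularity and Urysohn), and the final deduction $\mu^{0}\ll\gamma^{0}$ are all correct as written. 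One technical refinement in the absolutely continuous case: selecting $p(x)\in\partial F(u(x))$ fails on the set where $F(u(x))=+\infty$ (the subdifferential is empty there), and truncating a selection afterwards does not repair this. The cleaner standard device is to work with the Lipschitz envelopes $F_{n}(s):=\sup_{|t|\leq n}\bigl(st-F^{*}(t)\bigr)$, which are finite, $n$-Lipschitz and increase pointwise to $F^{**}=F$; a measurable maximizer $t_{n}(x)\in[-n,n]$ always exists, Lusin's theorem turns it into an admissible $\phi\in\mathrm{C}_b(A)$ with controlled loss, and monotone convergence in $n$ recovers $\int_{A}F(u)\,\d\gamma$ whether it is finite or not. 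With that substitution your argument is complete; the rest is, as you say, book-keeping.
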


\subsection{Compactness\label{subsec:Compactness}}

In this section, we derive the required compactness for proving the
liminf-estimate in Section \ref{subsec:Liminf-estimate}.

Recall that for given potential $V\in\mathrm{C}^{1}(\Omega,\R^{2})$
the dissipation functional $\fD_{\eps}^{V}$ is defined on the space
of trajectories equipped with the weak topology, i.e. $\mu^{\eps}\rightarrow\mu^{0}\in\L_{w}^{\infty}([0,T],Q)$
if and only if it holds

\[
\forall i\in\left\{ 1,2\right\} ,\,\forall\phi\in\mathrm{C}_{0}^{\infty}(\Omega\times[0,T]):\ \ \int_{0}^{T}\int_{\Omega}\phi\d\mu_{i}^{\eps}(x)\d t\rightarrow\int_{0}^{T}\int_{\Omega}\phi\d\mu_{i}^{0}(x)\d t\ .
\]
In the following we want to derive compactness for a sequence $(\mu^{\eps})_{\eps>0}$
of trajectories, satisfying the a priori bounds
\begin{align}
\sup_{\eps>0}\ \underset{t\in[0,T]}{\mathrm{ess\,sup}\ }\cE(\mu^{\eps}(t)) & \leq C,\quad\quad\sup_{\eps>0}\fD_{\eps}^{V}(\mu^{\eps})\leq C,\label{eq:AprioriBoundsDissipationFunctional}
\end{align}
where the total dissipation functional is
\begin{align*}
\fD_{\eps}^{V}(\mu) & =\begin{cases}
\int_{0}^{T}\cR_{\eps}(\mu,\dot{\mu})+\cR_{\eps}^{*}(\mu,-\D\cE^{V}(\mu))\d t, & \mu\in\mathrm{AC}([0,T],Q),\mu=c\,\d x\ \mathrm{a.e.\ in\ }[0,T]\\
\infty & \mathrm{otherwise},
\end{cases}
\end{align*}
and for $\mu=c\,\d x$ we have
\begin{align}
 & \int_{0}^{T}\cR_{\eps}(\mu,\dot{\mu})+\cR_{\eps}^{*}(\mu,-V-\D\cE(\mu))\d t\nonumber \\
 & =\inf_{(c,J,b)\in\mathrm{(gCE)}}\left\{ \int_{0}^{T}\left\{ \int_{\Omega}\sum_{j=1}^{2}\widetilde{\mathsf{Q}}\left(\delta_{j}c_{j},J_{j}\right)\d x+\int_{\Omega}\widetilde{\mathsf{C}}\left(\frac{\sqrt{c_{1}c_{2}}}{\eps},b_{2}(x)\right)\d x\right\} \ \d t\right\} +,\nonumber \\
 & +\int_{0}^{T}\frac{1}{2}\int_{\Omega}\sum_{j=1}^{2}\delta_{j}w_{j}^{V}\frac{\left|\nabla\rho_{j}^{V}\right|^{2}}{\rho_{j}^{V}}\d x+\frac{2}{\eps}\int_{\Omega}\sqrt{w_{1}^{V}w_{2}^{V}}\left(\sqrt{\rho_{1}^{V}}-\sqrt{\rho_{2}^{V}}\right)^{2}\d x\ \ \d t\ .\label{eq:EpsDissipationFunctional-1}
\end{align}
Using the bound of the dissipation functional $\fD_{\eps}^{V}(\mu^{\eps})\leq C$,
we conclude that there are diffusive fluxes $J^{\eps}=(J_{1}^{\eps},J_{2}^{\eps})$
and reaction fluxes $b^{\eps}=(b_{1}^{\eps},b_{2}^{\eps})$ such that
$J_{i}^{\eps}\in{\cal M}(\Omega,\R^{d})$, $b_{i}^{\eps}\in{\cal M}(\Omega,\R)$
and $(\mu^{\eps},J^{\eps},b^{\eps})$ satisfies the continuity equation
\[
(c,J,b)\in\mathrm{{(gCE)}}\ \ \Leftrightarrow\ \ \left\{ b_{1}+b_{2}=0\ \mathrm{and}\ \left\{ \begin{array}{c}
\dot{c}_{1}=-\div J_{1}+b_{1}\\
\dot{c}_{2}=-\div J_{2}+b_{2}
\end{array}\right\} \right\} .
\]
 Moreover, we get bounds:
\begin{align*}
i=1,2 & :\ \ \int_{0}^{T}\int_{\Omega}\frac{|\nabla\rho_{i}^{V,\eps}|^{2}}{\rho_{i}^{V,\eps}}\d x\d t\leq C,\quad\int_{0}^{T}\int_{\Omega}\widetilde{\mathsf{Q}}\left(c_{i}^{\eps},J_{i}^{\eps}\right)\d x\d t\leq C,\\
 & \int_{0}^{T}\int_{\Omega}\widetilde{\mathsf{C}}\left(\frac{\sqrt{c_{1}^{\eps}c_{2}^{\eps}}}{\eps},b_{2}^{\eps}(x)\right)\d x\d t\leq C,\quad\frac{1}{\eps}\int_{0}^{T}\int_{\Omega}\left(\sqrt{\rho_{1}^{V,\eps}}-\sqrt{\rho_{2}^{V,\eps}}\right)^{2}\d x\d t\leq C\ .
\end{align*}

\begin{rem}
Following \cite{Mani07PRUR} a distributional solution $(\mu,J,B)$
of the generalized continuity equation $\dot{\mu}=-\div J+B$ satisfying
$\int_{0}^{T}\int_{\Omega}|B|+|J|\d x<\infty$ can be assumed to be
absolutely continuous. The bounds can be obtained easily using Lemma
\ref{lem:L1BoundBySuperlinerarGrowth} for fixed $\eps>0$.
\end{rem}

Although the functional is convex in the concentration $c$ and in
the fluxes $J$ and $b$, weak convergence would be sufficient to
prove a liminf-estimate using a Joffe-type argument. But, comparing
the situation with the evolution equation, we aim in proving even
strong convergence for the densities $c^{\eps}\to c^{0}$ in $\L^{1}([0,T]\times\Omega,\R_{\geq0}^{2})$.
This is done in two steps: First, compactness of coarse-grained variables,
and secondly, convergence towards the slow manifold is shown, which
together implies strong compactness. This strategy has successfully
been applied already in the space-independent case in \cite{MieSte19ECLRS,MiPeSt20EDPCNLRS}.
Moreover, we show that the limit trajectory $\mu^{0}=c^{0}\d x$ has
a representative which is in $\mathrm{AC}([0,T],Q)$. Note that it
is not possible to prove pointwise convergence $\mu^{\eps}(t)\wstarlim\mu^{0}(t)$
for all $t\in[0,T]$. Instead, pointwise convergence is only shown
for the coarse-grained variables $\hat{\mu}^{\eps}:=\mu_{1}^{\eps}+\mu_{2}^{\eps}$.

First, we derive weak compactness in space-time, which immediately
follows from the uniform bound in $\eps$ and time on the energy.
\begin{lem}[Very Weak compactness in space-time]
Let $(\mu^{\eps})_{\eps>0}$, $\mu^{\eps}\in\L_{w}^{\infty}([0,T],Q)$
satisfy $\sup_{\eps>0}\ \underset{t\in[0,T]}{\mathrm{ess\,sup}\ }\cE(\mu^{\eps}(t))\leq C$.
Then for a.e. $t\in[0,T]$ the measure $\mu^{\eps}(t,\cdot)$ has
a Lebesgue density $c^{\eps}(t,\cdot)$. Moreover, there is a subsequence
(not relabeled), such that their densities $c^{\eps}$ are uniformly
integrable in $\Omega\times[0,T]\times\left\{ 1,2\right\} $ and hence,
$c_{i}^{\eps}$ converges weakly in $\L^{1}([0,T]\times\Omega)$ to
$c_{i}^{0}$ for $i=1,2$.
\end{lem}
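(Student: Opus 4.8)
The first assertion is immediate. The hypothesis $\mathrm{ess\,sup}_{t\in[0,T]}\cE(\mu^\eps(t))\leq C$ forces $\cE(\mu^\eps(t))<\infty$ for almost every $t\in[0,T]$, and by the definition \eqref{eq:Energy} the free energy takes finite values only on measures that are absolutely continuous with respect to the Lebesgue measure. Thus $\mu^\eps(t,\cdot)=c^\eps(t,\cdot)\,\d x$ for a.e. $t$. The remaining two statements are a standard combination of the de la Vall\'ee--Poussin criterion and the Dunford--Pettis theorem, the decisive ingredient being the superlinear growth of the Boltzmann function $E_B(r)=r\log r-r+1$.

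First I would convert the pointwise energy bound into a space-time entropy estimate. Integrating the hypothesis over $t$ yields
\[
\sup_{\eps>0}\int_0^T\!\!\int_\Omega\sum_{j=1}^2 E_B\!\left(\tfrac{c_j^\eps}{w_j}\right)w_j\,\d x\,\d t\leq CT.
\]
Because the stationary measure $w=\tfrac{1}{\alpha+\beta}(\beta,\alpha)^{\mathrm T}$ is a fixed positive vector, there are constants $0<w_-\leq w_j\leq w_+$, so $\log w_j$ is bounded. The elementary identity $E_B(c_j)=E_B(c_j/w_j)\,w_j+c_j\log w_j+1-w_j$, combined with the mass constraint $\int_\Omega c_j^\eps(t)\,\d x\leq 1$ (recall $\mu^\eps(t)\in Q$) and $|\Omega|=1$, then gives the uniform bound
\[
\sup_{\eps>0}\int_0^T\!\!\int_\Omega E_B(c_j^\eps)\,\d x\,\d t\leq C'\qquad(j=1,2).
\]

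Since $E_B$ is nonnegative, convex and superlinear, i.e. $E_B(r)/r\to\infty$ as $r\to\infty$, the de la Vall\'ee--Poussin criterion applies on the finite measure space $[0,T]\times\Omega\times\{1,2\}$ and shows that the densities $(c_1^\eps,c_2^\eps)$ are uniformly integrable. Together with the uniform $\L^1$-bound $\int_0^T\!\int_\Omega c_j^\eps\,\d x\,\d t\leq T$ (each time slice being a probability measure), the Dunford--Pettis theorem furnishes a subsequence (not relabeled) along which $c_i^\eps\rightharpoonup c_i^0$ weakly in $\L^1([0,T]\times\Omega)$ for $i=1,2$; one extracts a common subsequence for the two components.

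The computation itself is routine; the only point requiring some care is the joint measurability of the density $(t,x)\mapsto c^\eps(t,x)$, which is needed before the space-time integrals above are meaningful. This follows from $\mu^\eps\in\L_w^\infty([0,T],Q)$ together with the a.e.-in-$t$ absolute continuity established in the first step, after which the whole argument reduces to invoking the two classical compactness theorems, the superlinear growth of $E_B$ carrying all the analytic weight.
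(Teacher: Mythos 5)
Your proposal is correct and follows essentially the same route as the paper: finiteness of $\cE$ a.e.\ in $t$ gives the Lebesgue densities, and the superlinearity of the Boltzmann function combined with the de la Vall\'ee--Poussin criterion and Dunford--Pettis yields uniform integrability and weak $\L^{1}$-compactness. You merely spell out the reduction from the relative entropy $E_{B}(c_{j}/w_{j})w_{j}$ to $E_{B}(c_{j})$ (using positivity and boundedness of $w$), which the paper leaves implicit.
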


\begin{proof}
The bound on the energy implies that a.e. $t\in[0,T]$ the measure
$\mu^{\eps}(t,\cdot)$ has a Lebesgue density $c^{\eps}(t,\cdot)$.
Moreover, the functional $\mu\mapsto\int_{0}^{T}{\cal E}(\mu)\d t$
is superlinear and convex. Hence, it follows by the Theorem of de
Valleé -Poussin that $\mu^{\eps}$ are uniformly integrable and hence,
$\mu_{i}^{\eps}$ converges weakly in $\L^{1}([0,T]\times\Omega)$
to $\mu_{i}^{0}$.
\end{proof}
In the following, we are going to derive compactness for the concentrations
$c_{i}^{\eps}$ and the diffusive fluxes $J_{i}^{\eps}$. It is not
possible to get compactness for the fast reaction flux $b_{2}^{\eps}$
by bounding the dissipation functional. In particular, pointwise convergence
for the measures $\mu^{\eps}(t)$ cannot be achieved.
\begin{rem}
\label{rem:NoBoundOnFastReactionFluxes}To see that compactness for
the fast reaction flux $b_{2}^{\eps}$ is not possible obtain, we
set $\rho^{\eps}=1$ constant in $[0,T]\times\Omega\times\left\{ 1,2\right\} $
and $b_{2}^{\eps}=b^{\eps}$ constant in $[0,T]\times\Omega$. Then,
a bound means on the dissipation functional implies a bound 
\[
\infty>\int_{0}^{T}\int_{\Omega}\mathsf{C}\left(\frac{\sqrt{c_{1}^{\eps}c_{2}^{\eps}}}{\eps},b_{2}^{\eps}(x)\right)\d x\approx\mathsf{C}(\frac{1}{\eps},b^{\eps})=\frac{1}{\eps}\mathsf{C}(\eps b^{\eps})\approx|b^{\eps}|\log(\eps|b^{\eps}|+1).
\]
Setting $b^{\eps}=-\log\eps$, we easily see that $|b^{\eps}|\log(\eps|b^{\eps}|+1)\rightarrow0$
as $\eps\rightarrow0$, however, $b^{\eps}\rightarrow\infty$. Hence,
it is not possible to obtain compactness for the fast reaction flux
$b_{i}^{\eps}$. Later in Lemma \ref{lem:BinLCimpliesCepstozero}
the ``converse'' statement is proved: If $\int\!\!\int\mathsf{C}(b)\d x\d t<\infty$
then $\int\!\!\int\mathsf{C}(\frac{1}{\eps},b)\d x\d t\rightarrow0$.
\end{rem}

Next, we are going to derive time-regularity for the sequence $(\mu^{\eps})_{\eps>0}$
in proving compactness for the coarse-grained trajectories $\hat{\mu}^{\eps}=\mu_{1}^{\eps}+\mu_{2}^{\eps}$.
In particular, we are able to prove pointwise convergence in time.
\begin{lem}
[Time Regularity of $\mu^{\eps}$]\label{LemmaRegInTime} Let $(\mu^{\eps})_{\eps>0}$,
$\mu^{\eps}\in\L_{w}^{\infty}([0,T],Q)$ satisfying the a priori bounds
\eqref{eq:AprioriBoundsDissipationFunctional}. Then the curves $t\mapsto\hat{\mu}^{\eps}:=\mu_{1}^{\eps}(t)+\mu_{2}^{\eps}(t)$
have $\eps$-uniform bounded total variation in the space $\mathrm{Prob}(\Omega)$
equipped with the 1-Wasserstein distance, i.e.
\[
\|\hat{\mu}^{\eps}\|_{TV}:=\sup\left\{ \sum_{k=1}^{K}{\cal W}_{1}(\hat{\mu}^{\eps}(t_{k}),\hat{\mu}^{\eps}(t_{k-1}))\ :\ 0=t_{0}<\cdots<t_{k}<\cdots<t_{K}=T\right\} .
\]
 In particular, by Helly's selection principle, we conclude pointwise
convergence $\hat{\mu}^{\eps}(t):=\mu_{1}^{\eps}(t)+\mu_{2}^{\eps}(t)\wstarlim\hat{\mu}^{0}(t):=\mu_{1}^{0}(t)+\mu_{2}^{0}(t)$
for all $t\in[0,T]$ in $\mathrm{Prob}(\Omega)$ along a suitable
subsequence.
\end{lem}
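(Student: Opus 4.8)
The plan is to exploit the fact that, although the reaction fluxes $b^\eps$ cannot be controlled (see Remark \ref{rem:NoBoundOnFastReactionFluxes}), they cancel in the coarse-grained variable, so that $\hat\mu^\eps$ solves a genuine reaction-free continuity equation whose flux is uniformly $\L^1$-bounded in $\eps$; the $\mathrm{TV}$-bound then follows from the Kantorovich--Rubinstein duality for $\mathcal{W}_1$, and pointwise convergence from Helly's principle. First I would record the key structural observation: given $(c^\eps,J^\eps,b^\eps)\in\mathrm{(gCE)}$, summing the two equations $\dot c_1^\eps=-\div J_1^\eps+b_1^\eps$ and $\dot c_2^\eps=-\div J_2^\eps+b_2^\eps$ and using the constraint $b_1^\eps+b_2^\eps=0$ yields, in the distributional sense with no-flux boundary conditions,
\[
\dot{\hat c}^\eps=-\div\hat J^\eps,\qquad \hat J^\eps:=J_1^\eps+J_2^\eps,\qquad \hat c^\eps:=c_1^\eps+c_2^\eps.
\]
Thus $\hat\mu^\eps=\mu_1^\eps+\mu_2^\eps\in\mathrm{Prob}(\Omega)$ (its total mass is $\mu_1^\eps(\Omega)+\mu_2^\eps(\Omega)=1$) is a curve driven by the single diffusive flux $\hat J^\eps$, with the fast reaction entirely eliminated.

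Next I would bound the total variation via duality. For $\phi\in\mathrm{C}^1(\Omega)$ with $\|\nabla\phi\|_\infty\le1$ the continuity equation gives $\frac{\d}{\d r}\int_\Omega\phi\,\d\hat\mu^\eps(r)=\int_\Omega\nabla\phi\cdot\hat J^\eps\,\d x$, so that for $s\le t$
\[
\int_\Omega\phi\,\d\big(\hat\mu^\eps(t)-\hat\mu^\eps(s)\big)=\int_s^t\!\!\int_\Omega\nabla\phi\cdot\hat J^\eps\,\d x\,\d r\le\int_s^t\!\!\int_\Omega|\hat J^\eps|\,\d x\,\d r.
\]
Taking the supremum over such $\phi$ gives ${\cal W}_1(\hat\mu^\eps(s),\hat\mu^\eps(t))\le\int_s^t\!\int_\Omega|\hat J^\eps|\,\d x\,\d r$; summing over any partition $0=t_0<\dots<t_K=T$ telescopes to $\|\hat\mu^\eps\|_{TV}\le\int_0^T\!\int_\Omega|\hat J^\eps|\,\d x\,\d r\le\sum_{j=1}^2\int_0^T\!\int_\Omega|J_j^\eps|\,\d x\,\d r$. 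This already shows $t\mapsto\hat\mu^\eps(t)$ is absolutely continuous in ${\cal W}_1$.

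The main quantitative step is the $\eps$-uniform $\L^1$-bound on the diffusive fluxes. Since $\mathsf{Q}(x)=\tfrac12|x|^2$ has superlinear growth, Lemma \ref{lem:L1BoundBySuperlinerarGrowth} applied with $\rho=\delta_j c_j^\eps$ and $W=J_j^\eps$ gives
\[
\int_\Omega|J_j^\eps|\,\d x\le\int_\Omega\widetilde{\mathsf{Q}}(\delta_j c_j^\eps,J_j^\eps)\,\d x+k_{\mathsf{Q}}\,\delta_j\int_\Omega c_j^\eps\,\d x.
\]
Integrating in time, the first term is controlled by the dissipation bound in \eqref{eq:AprioriBoundsDissipationFunctional}, while the second is bounded by $k_{\mathsf{Q}}\delta_j T$ because $\int_\Omega c_j^\eps\,\d x=\mu_j^\eps(\Omega)\le1$ for a.e.\ $t$. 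Hence $\int_0^T\!\int_\Omega|J_j^\eps|\,\d x\,\d t\le C$ uniformly, and therefore $\|\hat\mu^\eps\|_{TV}\le C$ uniformly in $\eps$. (Equivalently one may estimate $\int_\Omega|J_j^\eps|\le(2\int_\Omega\widetilde{\mathsf{Q}}(\delta_j c_j^\eps,J_j^\eps))^{1/2}(\delta_j\int_\Omega c_j^\eps)^{1/2}$ directly by Cauchy--Schwarz, since $\widetilde{\mathsf{Q}}(\delta_j c_j,J_j)=\tfrac12|J_j|^2/(\delta_j c_j)$.)

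Finally I would invoke Helly's selection principle. As $\Omega$ is compact, $(\mathrm{Prob}(\Omega),{\cal W}_1)$ is a compact metric space metrizing weak-$*$ convergence; the curves $t\mapsto\hat\mu^\eps(t)$ have uniformly bounded total variation, so Helly yields a subsequence converging at every $t\in[0,T]$ to some limit curve. Its pointwise values must agree for a.e.\ $t$ with the weak $\L^1$-limit $\mu_1^0+\mu_2^0$ from the preceding very-weak-compactness lemma, which identifies the limit as $\hat\mu^0(t)=\mu_1^0(t)+\mu_2^0(t)$ and furnishes pointwise weak-$*$ convergence for all $t$. I expect the conceptual crux to be the reaction-flux cancellation $b_1^\eps+b_2^\eps=0$ that turns the coarse-grained variable into a solution of a reaction-free continuity equation; once this is in place, the uniform $\L^1$-flux bound is the only point requiring genuine estimation, and the passage to a pointwise limit is routine.
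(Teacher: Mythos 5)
Your proposal is correct and follows essentially the same route as the paper: the reaction fluxes cancel in the coarse-grained variable (the paper realizes this by testing with $\phi=(\hat\phi,\hat\phi)$, you by summing the two equations first), the $\mathcal{W}_1$-increments are bounded via Kantorovich--Rubinstein duality by $\int\!\!\int|J_1^\eps|+|J_2^\eps|$, the uniform $\L^1$-flux bound comes from Lemma \ref{lem:L1BoundBySuperlinerarGrowth} together with the dissipation and mass bounds, and Helly's selection principle gives the pointwise limit. The extra Cauchy--Schwarz variant and the explicit identification of the limit with $\mu_1^0+\mu_2^0$ are harmless additions.
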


\begin{proof}
We exploit the dual formulation of the $\L^{1}$-Wasserstein distance,
i.e. integrating against Lipschitz functions (see e.g. \cite{AmGiSa05GFMS}).
Take $\phi\in\mathrm{C}^{1}(\Omega\times\left\{ 1,2\right\} )$ with
$\|\phi\|_{\W^{1,\infty}(\Omega)}\leq1$ and $\phi=(\hat{\phi},\hat{\phi})$.
Using the continuity equations $\left\{ b_{1}+b_{2}=0\ \mathrm{and}\ \left\{ \begin{array}{c}
\dot{c}_{1}=-\div J_{1}+b_{1}\\
\dot{c}_{2}=-\div J_{2}+b_{2}
\end{array}\right\} \right\} $, we conclude for all $[t_{1},t_{2}]\subset[0,T]$ that 
\begin{align*}
\int_{\Omega}\phi\cdot(\d\mu^{\eps}(t_{2})-\d\mu^{\eps}(t_{1})) & =\int_{t_{1}}^{t_{2}}\langle\phi,\dot{\mu}^{\eps}\rangle\d t\\
 & \leq\int_{t_{1}}^{t_{2}}\!\!\int_{\Omega}\nabla\hat{\phi}\cdot\left(J_{1}^{\eps}+J_{2}^{\eps}\right)\ \d x\ \d t\leq\int_{t_{1}}^{t_{2}}\!\!\int_{\Omega}\sum_{i=1}^{2}|J_{i}^{\eps}|\ \d x\ \d t.
\end{align*}
By the bound on the dissipation functional, we obtain $\eps$-uniform
bounds on the term $\int_{t_{1}}^{t_{2}}\int_{\Omega}\mathsf{Q}(\delta_{j}c_{j},J_{j})\d x\d t$.
Moreover, we have $\int_{\Omega}\rho_{j}^{\eps}\leq1$ for all $t\in[0,T]$.
Hence by Lemma \ref{lem:L1BoundBySuperlinerarGrowth} we conclude
an $\eps$-uniform bound on each addend $\int_{t_{1}}^{t_{2}}\!\!\int_{\Omega}\sum_{i=1}^{2}|J_{i}^{\eps}|\ \d x\ \d t$,
which by summing up implies that $\|\hat{\mu}^{\eps}\|_{TV}$ is $\eps$-uniformly
bounded.
\end{proof}
Next, the compactness result from Lemma \ref{lem:LiminfEstimateAGS}
is used in order to prove compactness for the fluxes and spatial regularity.
\begin{lem}[Regularity for the fluxes and spatial regularity]
\label{lem:RegularityFluxesAndSpaceAndLiminf} Let $(\mu^{\eps})_{\eps>0}$
with $\mu^{\eps}\in\L_{w}^{\infty}([0,T],Q)$ satisfying the a priori
bounds \eqref{eq:AprioriBoundsDissipationFunctional}. Then the corresponding
diffusive fluxes $J^{\eps}:[0,T]\times\Omega\rightarrow\R^{2}$ converge
weakly-star $J^{\eps}\wstarlim J^{0}$ in ${\cal M}([0,T]\times\Omega\times\left\{ 1,2\right\} )$
and $J_{j}^{0}\ll\mu_{j}^{0}$. Moreover $\nabla\rho^{V,\eps}\wstarlim\nabla\rho^{V,0}$
in ${\cal M}([0,T]\times\Omega\times\left\{ 1,2\right\} )$ and $\nabla\rho_{j}^{0}\ll\mu_{j}^{0}$.
In particular, we conclude that $\rho_{j}^{\eps}$ is uniformly bounded
in $\L^{1}([0,T],\W^{1,1}(\Omega))$, which also implies that $\hat{c}^{\eps}$
is uniformly in $\L^{1}([0,T],\W^{1,1}(\Omega))$.
\end{lem}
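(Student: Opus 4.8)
The plan is to derive uniform $\L^{1}$-bounds on the diffusive fluxes and on the gradients of the relative densities, deduce weak-$*$ precompactness in the space of measures, and then pin down the absolute continuity of the limits through the lower-semicontinuity Lemma~\ref{lem:LiminfEstimateAGS}. I would first treat the fluxes. Applying Lemma~\ref{lem:L1BoundBySuperlinerarGrowth} with the superlinear convex function $\mathsf{Q}$, density $\rho=\delta_{j}c_{j}^{\eps}$ and $W=J_{j}^{\eps}$ gives, pointwise in $t$,
\[
\int_{\Omega}|J_{j}^{\eps}|\,\d x\leq\int_{\Omega}\widetilde{\mathsf{Q}}(\delta_{j}c_{j}^{\eps},J_{j}^{\eps})\,\d x+k_{\mathsf{Q}}\,\delta_{j}\int_{\Omega}c_{j}^{\eps}\,\d x.
\]
Integrating in time, the first term is controlled by the dissipation bound in \eqref{eq:AprioriBoundsDissipationFunctional} and the second by $\int_{\Omega}c_{j}^{\eps}\,\d x\leq1$, so that $J^{\eps}$ is bounded in $\L^{1}([0,T]\times\Omega)$ and hence of uniformly bounded total variation. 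Banach--Alaoglu then provides a (non-relabeled) subsequence with $J^{\eps}\wstarlim J^{0}$ in $\mathcal{M}([0,T]\times\Omega\times\{1,2\})$.

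To obtain $J_{j}^{0}\ll\mu_{j}^{0}$ I would invoke Lemma~\ref{lem:LiminfEstimateAGS} with $F(r)=\tfrac{1}{2}|r|^{2}$, reading $\int_{0}^{T}\!\int_{\Omega}\widetilde{\mathsf{Q}}(\delta_{j}c_{j}^{\eps},J_{j}^{\eps})\,\d x\,\d t$ as the relative-density functional $\mathcal{F}(J_{j}^{\eps}\,\d x\,\d t,\ \delta_{j}\mu_{j}^{\eps}\,\d t)$ of the vector measure $J_{j}^{\eps}$ with respect to the scalar reference measure $\delta_{j}\mu_{j}^{\eps}$. Since $J_{j}^{\eps}\wstarlim J_{j}^{0}$ and $\delta_{j}\mu_{j}^{\eps}\wstarlim\delta_{j}\mu_{j}^{0}$ (the latter from weak $\L^{1}$-convergence of $c_{j}^{\eps}$), while the functional stays bounded, the lemma forces $J_{j}^{0}\ll\delta_{j}\mu_{j}^{0}$, i.e.\ $J_{j}^{0}\ll\mu_{j}^{0}$.

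The relative densities are handled in the same spirit, now exploiting that $w_{j}^{V}\in\mathrm{C}^{1}(\Omega)$ is bounded above and below by positive constants, whence $\int_{\Omega}\rho_{j}^{V,\eps}\,\d x=\int_{\Omega}c_{j}^{\eps}/w_{j}^{V}\,\d x\leq C$. Applying Lemma~\ref{lem:L1BoundBySuperlinerarGrowth} with $\rho=\rho_{j}^{V,\eps}$ and $W=\nabla\rho_{j}^{V,\eps}$, and using $\widetilde{\mathsf{Q}}(\rho_{j}^{V,\eps},\nabla\rho_{j}^{V,\eps})=\tfrac{1}{2}|\nabla\rho_{j}^{V,\eps}|^{2}/\rho_{j}^{V,\eps}$ together with the Fisher-information bound, yields a uniform $\L^{1}$-bound on $\nabla\rho^{V,\eps}$, hence $\nabla\rho^{V,\eps}\wstarlim G$ in $\mathcal{M}$ along a subsequence. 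As $c_{j}^{\eps}\wstarlim c_{j}^{0}$ weakly in $\L^{1}$ and $w_{j}^{V}$ is fixed and smooth, $\rho_{j}^{V,\eps}\to\rho_{j}^{V,0}=c_{j}^{0}/w_{j}^{V}$ weakly in $\L^{1}$; testing the distributional identity $\int\nabla\phi\,\rho_{j}^{V,\eps}=-\int\phi\,\nabla\rho_{j}^{V,\eps}$ against $\phi\in\mathrm{C}_{c}^{\infty}$ then identifies $G=\nabla\rho^{V,0}$ as the distributional gradient of the limit. A second application of Lemma~\ref{lem:LiminfEstimateAGS}, with numerator $\nabla\rho_{j}^{V,\eps}$ and scalar reference $\rho_{j}^{V,\eps}\,\d x\,\d t$, gives $\nabla\rho_{j}^{V,0}\ll\mu_{j}^{0}$.

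Finally the regularity statements follow by collecting estimates: the uniform bounds $\int_{0}^{T}\!\int_{\Omega}(\rho_{j}^{V,\eps}+|\nabla\rho_{j}^{V,\eps}|)\,\d x\,\d t\leq C$ are exactly the assertion that $\rho_{j}^{V,\eps}$ is bounded in $\L^{1}([0,T],\W^{1,1}(\Omega))$. For $\hat{c}^{\eps}=c_{1}^{\eps}+c_{2}^{\eps}$ I would write $c_{j}^{\eps}=\rho_{j}^{V,\eps}w_{j}^{V}$ and use the product rule $\nabla c_{j}^{\eps}=w_{j}^{V}\nabla\rho_{j}^{V,\eps}+\rho_{j}^{V,\eps}\nabla w_{j}^{V}$; boundedness of $w_{j}^{V}$ and $\nabla w_{j}^{V}$ on the compact set $\Omega$ transfers the $\L^{1}$-bounds to $\nabla\hat{c}^{\eps}$, while $\int_{\Omega}\hat{c}^{\eps}\,\d x=1$ controls the $\L^{1}$-part. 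I expect the main obstacle to be the correct use of Lemma~\ref{lem:LiminfEstimateAGS} in these two settings: matching $\widetilde{\mathsf{Q}}$ to the functional $\mathcal{F}(\mu,\gamma)=\int F(\d\mu/\d\gamma)\,\d\gamma$ with a vector-valued numerator and a scalar reference measure (the stated lemma must be applied in its vector-valued extension), and securing the joint weak-$*$ convergence of numerator and denominator that it requires.
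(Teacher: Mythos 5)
Your proposal is correct and follows essentially the same route as the paper: $\L^{1}$-bounds on $J^{\eps}$ and $\nabla\rho^{V,\eps}$ via Lemma \ref{lem:L1BoundBySuperlinerarGrowth}, weak-$*$ compactness, absolute continuity of the limits via Lemma \ref{lem:LiminfEstimateAGS}, and identification of the limiting gradient by testing against smooth functions. The paper's own proof is considerably terser and silently uses the vector-valued form of Lemma \ref{lem:LiminfEstimateAGS} on non-normalized measures, exactly the point you flag; your version simply spells out these details.
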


\begin{proof}
By the bound on the dissipation functional, we get (after extracting
a suitable subsequence of $\eps\rightarrow0$) that $J^{\eps}\wstarlim J^{0}$.
Moreover, we have $\int_{0}^{T}\int_{\Omega}\frac{|J_{j}^{\eps}|^{2}}{\rho_{j}^{\eps}}\d x\leq C$
and $\rho_{j}^{\eps}\wstarlim\rho_{j}^{0}$ . Hence applying the Lemma
\ref{lem:LiminfEstimateAGS}, we conclude that $J_{j}^{0}\ll\mu_{j}^{0}$.
Similarly, we conclude compactness for the gradients $\nabla\rho^{V,\eps}$.
The only thing that remains is to identify the limit. But this is
clear by definition of the weak derivatives, i.e. integrating against
smooth test functions, because this is captured in the weak star convergence.
Lemma \ref{lem:L1BoundBySuperlinerarGrowth} implies that $\rho_{j}^{\eps}$
is uniformly bounded in $\L^{1}([0,T],\W^{1,1}(\Omega))$.
\end{proof}
The spatial regularity and the temporal regularity provides a compactness
result by a BV-generalization of the Aubin-Lions-Simon Lemma.
\begin{thm}[\cite{BarPre86COBS,HePaRe18TMSF}]
\label{thm:AubinLionsForBV}Let $X,Y,Z$ be Banach spaces such that
$X$ is compactly embedded in $Y$, and $Y$ is continuously in $Z^{*}$.
Let $u^{\eps}$ be a bounded sequence in $\L^{1}([0,T],X)$ and in
$BV([0,T],Z^{*})$. Then (up to a sequence) $u^{\eps}$ strongly converges
in $\L^{1}([0,T],Y)$.
\end{thm}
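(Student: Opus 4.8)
The plan is to reduce the statement to the vector-valued compactness theorem of Simon type, whose hypotheses are precisely an $\L^1$-bound in the fine space together with a uniform modulus of continuity in time measured in a coarse space. The starting observation is that the compact embedding $X\hookrightarrow\hookrightarrow Y$ together with the continuous embedding $Y\hookrightarrow Z^{*}$ yields, by Ehrling's lemma, the interpolation inequality
\begin{equation*}
\forall\,\eta>0\ \ \exists\,C_{\eta}>0:\qquad \|v\|_{Y}\le\eta\,\|v\|_{X}+C_{\eta}\,\|v\|_{Z^{*}}\quad(v\in X).
\end{equation*}
This is the only place where compactness of $X\hookrightarrow Y$ enters, and it is what lets the $X$-bound and the $Z^{*}$-regularity cooperate.

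Next I would establish the key time-translation estimate in the coarse space. For any Banach-space-valued $u\in BV([0,T],E)$ with distributional derivative measure $\D u$ one has $\|u(t+h)-u(t)\|_{E}\le|\D u|((t,t+h])$, whence by Fubini
\begin{equation*}
\int_{0}^{T-h}\|u(t+h)-u(t)\|_{E}\,\d t\le\int_{0}^{T-h}|\D u|((t,t+h])\,\d t\le h\,|\D u|([0,T])=h\,\|u\|_{BV([0,T],E)}.
\end{equation*}
Applying this with $E=Z^{*}$ and the $\eps$-uniform bound $\sup_{\eps}\|u^{\eps}\|_{BV([0,T],Z^{*})}\le C$ gives $\sup_{\eps}\int_{0}^{T-h}\|u^{\eps}(t+h)-u^{\eps}(t)\|_{Z^{*}}\,\d t\le Ch\to0$ as $h\to0$; that is, the time-translates are equismall in $Z^{*}$, uniformly in $\eps$. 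The $BV$ hypothesis is in fact comfortably stronger than needed here, since it produces a modulus linear in $h$, so no borderline integrability issue arises.

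Combining the two ingredients through Ehrling, for every $\eta>0$ one finds
\begin{align*}
\int_{0}^{T-h}\|u^{\eps}(t+h)-u^{\eps}(t)\|_{Y}\,\d t
&\le\eta\int_{0}^{T-h}\|u^{\eps}(t+h)-u^{\eps}(t)\|_{X}\,\d t+C_{\eta}\int_{0}^{T-h}\|u^{\eps}(t+h)-u^{\eps}(t)\|_{Z^{*}}\,\d t\\
&\le 2\eta\,\|u^{\eps}\|_{\L^{1}([0,T],X)}+C_{\eta}\,Ch\le 2\eta C+C_{\eta}Ch,
\end{align*}
and letting first $h\to0$ and then $\eta\to0$ shows that the time-translates of $(u^{\eps})_{\eps}$ tend to zero in $\L^{1}([0,T],Y)$, uniformly in $\eps$. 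Together with the $\eps$-uniform bound in $\L^{1}([0,T],X)$ and the compact embedding $X\hookrightarrow\hookrightarrow Y$, which supplies the spatial tightness in $Y$, the Fréchet--Kolmogorov--Riesz criterion in the vector-valued setting yields relative compactness of $(u^{\eps})_{\eps}$ in $\L^{1}([0,T],Y)$; extracting a subsequence gives the asserted strong convergence.

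I expect the main obstacle to be the compactness criterion itself rather than the two estimates above: one must justify that uniform smallness of time-translates in $Y$ plus the $\L^{1}([0,T],X)$-bound implies relative compactness in $\L^{1}([0,T],Y)$. This is exactly the content of the cited compactness theorems \cite{BarPre86COBS,HePaRe18TMSF}, whose proofs combine the translation estimate with a mollification argument in time and the compactness of $X\hookrightarrow Y$ for the spatial variable. Reproducing that machinery in full is the technical core, which is precisely why the statement is quoted rather than proved here.
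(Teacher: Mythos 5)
The paper does not prove this statement at all: it is quoted verbatim as a known result from the cited references \cite{BarPre86COBS,HePaRe18TMSF}, so there is no in-paper proof to compare against. Your outline is the standard argument for exactly this BV-variant of the Aubin--Lions--Simon lemma, and each step is sound: Ehrling's inequality $\|v\|_{Y}\le\eta\|v\|_{X}+C_{\eta}\|v\|_{Z^{*}}$, the Fubini bound $\int_{0}^{T-h}\|u(t+h)-u(t)\|_{Z^{*}}\,\d t\le h\,\|u\|_{BV([0,T],Z^{*})}$, and the resulting uniform smallness of time-translates in $\L^{1}([0,T],Y)$ combined with the $\L^{1}([0,T],X)$-bound (which gives relative compactness in $Y$ of the time-averages $\int_{t_{1}}^{t_{2}}u^{\eps}\,\d t$) are precisely the two hypotheses of Simon's compactness criterion. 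You correctly identify that the remaining content is that criterion itself, which is what the cited works supply; your sketch is consistent with how they argue.
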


In our situation we immediately conclude that $\hat{c}^{\eps}$ converges
strongly.
\begin{cor}[Strong convergence of coarse-grained variables]
Let $(\mu^{\eps})_{\eps>0}$ with $\mu^{\eps}\in\L_{w}^{\infty}([0,T],Q)$
satisfying the a priori bounds \eqref{eq:AprioriBoundsDissipationFunctional}.
Then the coarse-grained densities $\hat{c^{\eps}}$ converge strongly
in $\L^{1}([0,T]\times\Omega)$.
\end{cor}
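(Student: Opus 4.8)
The plan is to deduce the strong convergence from the BV-version of the Aubin--Lions--Simon lemma, Theorem \ref{thm:AubinLionsForBV}, applied to the Banach spaces
\[
X=\W^{1,1}(\Omega),\qquad Y=\L^{1}(\Omega),\qquad Z=\W^{1,\infty}(\Omega).
\]
Since $\Omega\subset\R^{d}$ is compact with Lipschitz boundary, the Rellich--Kondrachov theorem gives that $X=\W^{1,1}(\Omega)$ is compactly embedded in $Y=\L^{1}(\Omega)$, while $Y=\L^{1}(\Omega)$ embeds continuously into $Z^{*}=(\W^{1,\infty}(\Omega))^{*}$ via $f\mapsto(\phi\mapsto\int_{\Omega}f\phi\,\d x)$, because $|\int_{\Omega}f\phi\,\d x|\leq\|f\|_{\L^{1}}\|\phi\|_{\W^{1,\infty}}$. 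Thus the two structural hypotheses of Theorem \ref{thm:AubinLionsForBV} hold, and it remains to verify the two a priori bounds on $\hat{c}^{\eps}$.

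For the spatial bound I would simply quote Lemma \ref{lem:RegularityFluxesAndSpaceAndLiminf}, which already states that $\hat{c}^{\eps}$ is uniformly bounded in $\L^{1}([0,T],\W^{1,1}(\Omega))=\L^{1}([0,T],X)$. For the temporal bound I would convert the $\eps$-uniform bounded variation of $t\mapsto\hat{\mu}^{\eps}$ in $(\Prob(\Omega),d_{\mathcal{W}_{1}})$ from Lemma \ref{LemmaRegInTime} into a $BV([0,T],Z^{*})$ bound. The key observation is that for two probability measures their difference is a signed measure of zero mass, and since the unit ball of $Z=\W^{1,\infty}(\Omega)$ consists (up to the intrinsic-distance identification on the compact domain $\Omega$) of $1$-Lipschitz functions, Kantorovich--Rubinstein duality yields
\[
\|\hat{\mu}^{\eps}(t)-\hat{\mu}^{\eps}(s)\|_{Z^{*}}=\sup_{\|\phi\|_{\W^{1,\infty}}\leq1}\langle\hat{\mu}^{\eps}(t)-\hat{\mu}^{\eps}(s),\phi\rangle\leq\sup_{\mathrm{Lip}(\phi)\leq1}\langle\hat{\mu}^{\eps}(t)-\hat{\mu}^{\eps}(s),\phi\rangle=d_{\mathcal{W}_{1}}(\hat{\mu}^{\eps}(t),\hat{\mu}^{\eps}(s)).
\]
Summing this inequality over an arbitrary partition of $[0,T]$ shows that the $Z^{*}$-total variation of $t\mapsto\hat{\mu}^{\eps}(t)$ is bounded by $\|\hat{\mu}^{\eps}\|_{TV}$, which is $\eps$-uniformly bounded by Lemma \ref{LemmaRegInTime}; hence $\hat{c}^{\eps}$ is uniformly bounded in $BV([0,T],Z^{*})$.

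With both bounds established, Theorem \ref{thm:AubinLionsForBV} yields a subsequence (not relabeled) along which $\hat{c}^{\eps}$ converges strongly in $\L^{1}([0,T],Y)=\L^{1}([0,T]\times\Omega)$, which is the assertion. The only point requiring a little care --- and the natural candidate for the main obstacle --- is the passage from the Wasserstein-$1$ variation bound to a genuine $BV$ bound in the fixed dual space $Z^{*}$: this rests on the inclusion of the $\W^{1,\infty}$-unit ball in the $1$-Lipschitz functions, which is where the equal-mass property of probability measures is used so that the additive constant in $\phi$ drops out. Everything else is a direct application of the cited embeddings and regularity lemmas.
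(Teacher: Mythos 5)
Your proposal is correct and follows essentially the same route as the paper: the BV-version of Aubin--Lions--Simon with $X=\W^{1,1}(\Omega)$, $Y=\L^{1}(\Omega)$, $Z^{*}=\W^{1,\infty}(\Omega)^{*}$, the spatial bound from Lemma \ref{lem:RegularityFluxesAndSpaceAndLiminf} and the temporal bound from Lemma \ref{LemmaRegInTime}. Your extra step converting the $d_{\mathcal{W}_{1}}$-variation into a $BV([0,T],Z^{*})$ bound is a correct elaboration of what the paper leaves implicit (indeed, the proof of Lemma \ref{LemmaRegInTime} already tests against $\|\phi\|_{\W^{1,\infty}}\leq1$, so the $Z^{*}$-bound is what is actually established there).
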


\begin{proof}
Lemma \ref{LemmaRegInTime} provides that $\hat{c}^{\eps}$ is bounded
in $BV([0,T],\W^{1,\infty}(\Omega)^{*})$ and Lemma \ref{lem:RegularityFluxesAndSpaceAndLiminf}
provides that $\hat{c}^{\eps}$ is bounded in $\L^{1}([0,T],\W^{1,1}(\Omega))$.
Since the embedding $\W^{1,1}(\Omega)\subset\L^{1}(\Omega)$ is compact
and the embedding $\L^{1}(\Omega)\subset\W^{1,\infty}(\Omega)^{*}$
is continuous, Theorem \ref{thm:AubinLionsForBV} yields that the
sequence $\hat{c^{\eps}}$ is compact in $\L^{1}([0,T]\times\Omega)$.
\end{proof}
It is also clear that we get convergence towards the fast manifold,
which results from the Fisher information of the fast reaction.
\begin{lem}[Convergence towards microscopic equilibrium and strong compactness]
\label{lem:ConvergenceToSlowManifoldAndStongCompactness} Let $(\mu^{\eps})_{\eps>0}$,
$\mu^{\eps}\in\L_{w}^{\infty}([0,T],Q)$ satisfying the a priori bounds
\eqref{eq:AprioriBoundsDissipationFunctional}. Then there is a subsequence
such that $c^{\eps}\rightarrow c^{0}$ strongly in $\L^{1}([0,T]\times\Omega)$
and, moreover, it holds $\rho_{1}^{V,0}=\rho_{2}^{V,0}$ a.e. in $[0,T]\times\Omega$.
\end{lem}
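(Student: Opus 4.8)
The plan is to combine the two complementary pieces of compactness already derived---strong $\L^1$-convergence of the coarse-grained density $\hat c^\eps$ and equilibration of the relative densities---into pointwise a.e.\ convergence of each component, and then to upgrade to strong $\L^1$-convergence by uniform integrability.

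First I would extract the equilibration of the microscopic densities from the reaction Fisher information. Since $V\in\mathrm{C}^1(\Omega,\R^2)$ and $\Omega$ is compact, the stationary weights $w_1^V,w_2^V$ are bounded and bounded away from zero, so $\sqrt{w_1^V w_2^V}\ge c_0>0$. The bound $\frac{1}{\eps}\int_0^T\!\!\int_\Omega(\sqrt{\rho_1^{V,\eps}}-\sqrt{\rho_2^{V,\eps}})^2\,\d x\,\d t\le C$ from \eqref{eq:AprioriBoundsDissipationFunctional} then forces $\int_0^T\!\!\int_\Omega(\sqrt{\rho_1^{V,\eps}}-\sqrt{\rho_2^{V,\eps}})^2\,\d x\,\d t \le C\eps/c_0 \to 0$, i.e.\ $\sqrt{\rho_1^{V,\eps}}-\sqrt{\rho_2^{V,\eps}}\to 0$ strongly in $\L^2([0,T]\times\Omega)$. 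Passing to a subsequence, this difference tends to $0$ for a.e.\ $(t,x)$. By the Corollary on strong convergence of the coarse-grained variables, $\hat c^\eps=c_1^\eps+c_2^\eps\to\hat c^0$ strongly in $\L^1([0,T]\times\Omega)$, and along a further subsequence $\hat c^\eps\to\hat c^0$ pointwise a.e., with $\hat c^0<\infty$ a.e.\ since $\hat c^0\in\L^1$.

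Next I would run a pointwise algebraic argument to pin down each relative density. Writing $r_i^\eps=\sqrt{\rho_i^{V,\eps}}$, the identity $\hat c^\eps=w_1^V(r_1^\eps)^2+w_2^V(r_2^\eps)^2$ and the lower bound on $w_i^V$ show that at a.e.\ fixed $(t,x)$ the sequence $r_i^\eps$ is bounded, being controlled by the convergent sequence $\hat c^\eps$. Substituting $r_1^\eps=r_2^\eps+\theta^\eps$ with $\theta^\eps\to0$ a.e.\ yields $\hat c^\eps=\hat w^V (r_2^\eps)^2+2w_1^V r_2^\eps\theta^\eps+w_1^V(\theta^\eps)^2$, where $\hat w^V=w_1^V+w_2^V$; the last two terms vanish a.e.\ by boundedness of $r_2^\eps$, so $(r_2^\eps)^2=\rho_2^{V,\eps}\to \hat c^0/\hat w^V=:\hat\rho^{V,0}$ a.e., and likewise $\rho_1^{V,\eps}\to\hat\rho^{V,0}$. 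This simultaneously establishes the equilibrium identity $\rho_1^{V,0}=\rho_2^{V,0}$ a.e.\ and, multiplying by the fixed positive weights, the pointwise convergence $c_i^\eps=w_i^V\rho_i^{V,\eps}\to w_i^V\hat\rho^{V,0}=c_i^0$ a.e.

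Finally I would upgrade a.e.\ convergence to strong $\L^1$-convergence. The uniform integrability of $c^\eps$ in $[0,T]\times\Omega\times\{1,2\}$, obtained from the $\eps$-uniform energy bound via de la Vallée-Poussin, together with the pointwise a.e.\ convergence $c^\eps\to c^0$, gives $c^\eps\to c^0$ strongly in $\L^1([0,T]\times\Omega,\R_{\geq0}^2)$ by Vitali's convergence theorem. The main obstacle is the middle step: one only controls the difference of the square roots, not of the densities themselves, so the argument must exploit the pointwise boundedness coming from the convergent coarse-grained density to conclude that the $\sqrt{\cdot}$-equilibration indeed propagates to the densities; the lower bound on $w_i^V$, which uses $V\in\mathrm{C}^1$ and compactness of $\Omega$, is essential here.
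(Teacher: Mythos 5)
Your proposal is correct, and it follows the same strategic skeleton as the paper's proof --- both arguments combine the two complementary pieces of information, namely the strong $\L^{1}$-convergence of the coarse-grained density $\hat{c}^{\eps}$ and the equilibration $\sqrt{\rho_{1}^{V,\eps}}-\sqrt{\rho_{2}^{V,\eps}}\to0$ in $\L^{2}$ forced by the reaction Fisher information. Where you differ is in the mechanics of the final upgrade. The paper stays entirely at the level of norms: it uses the identity $x-y=(\sqrt{x}-\sqrt{y})(\sqrt{x}+\sqrt{y})$ together with Cauchy--Schwarz and the AM--GM bound $\|\sqrt{\rho_{1}^{V,\eps}}+\sqrt{\rho_{2}^{V,\eps}}\|_{\L^{2}}^{2}\leq2\int\!\!\int(\rho_{1}^{V,\eps}+\rho_{2}^{V,\eps})$ (controlled by the total mass) to conclude $\|\rho_{1}^{V,\eps}-\rho_{2}^{V,\eps}\|_{\L^{1}}\to0$ directly, and then writes $c_{i}^{\eps}-w_{i}^{V}\hat{c}^{0}/(w_{1}^{V}+w_{2}^{V})$ explicitly as a linear combination of $\rho_{2}^{V,\eps}-\rho_{1}^{V,\eps}$ and $\hat{c}^{\eps}-\hat{c}^{0}$, both of which vanish in $\L^{1}$; no further subsequence extraction and no appeal to Vitali is needed. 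Your route instead passes to pointwise a.e.\ convergence along further subsequences, runs the algebraic substitution $r_{1}^{\eps}=r_{2}^{\eps}+\theta^{\eps}$ pointwise (correctly using the lower bound on $w_{i}^{V}$ to get pointwise boundedness of $r_{i}^{\eps}$ from the convergent $\hat{c}^{\eps}$), and then reinstates strong $\L^{1}$-convergence via uniform integrability and Vitali's theorem. Both are valid; the paper's version is slightly more economical and yields convergence of the originally extracted subsequence without invoking the de la Vall\'ee-Poussin/Vitali machinery, while yours is more elementary in that it reduces the key step to a pointwise computation and would survive even if the Cauchy--Schwarz trick were unavailable (e.g.\ for a nonquadratic coupling of the square roots).
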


\begin{proof}
The bound on the dissipation functional provides $\int_{0}^{T}\int_{\Omega}\left(\sqrt{\rho_{1}^{V,\eps}}-\sqrt{\rho_{2}^{V,\eps}}\right)^{2}\d x\d t\leq C\eps$.
Hence, we conclude $\|\sqrt{\rho_{1}^{V,\eps}}-\sqrt{\rho_{2}^{V,\eps}}\|_{\L^{2}([0,T]\times\Omega)}\rightarrow0$
as $\eps\rightarrow0$. In particular, we conclude that $\rho_{1}^{V,0}=\rho_{2}^{V,0}$.
The strong convergence towards the slow manifold provides strong convergence
for the whole sequence. Indeed, using Cauchy-Schwartz inequality and
$x-y=\left(\sqrt{x}-\sqrt{y}\right)\left(\sqrt{x}+\sqrt{y}\right)$,
we have
\begin{align*}
\|\rho_{1}^{V,\eps}-\rho_{2}^{V,\eps}\|_{\L^{1}([0,T]\times\Omega)} & \leq\|\sqrt{\rho_{1}^{V,\eps}}-\sqrt{\rho_{2}^{V,\eps}}\|_{\L^{2}([0,T]\times\Omega)}\|\sqrt{\rho_{1}^{V,\eps}}+\sqrt{\rho_{2}^{V,\eps}}\|_{\L^{2}([0,T]\times\Omega)}.
\end{align*}
The last term can be estimated by the AM-GM inequality
\[
\|\sqrt{\rho_{1}^{V,\eps}}+\sqrt{\rho_{2}^{V,\eps}}\|_{\L^{2}([0,T]\times\Omega)}^{2}=\int_{0}^{T}\!\!\!\int_{\Omega}\rho_{1}^{V,\eps}+\rho_{2}^{V,\eps}+2\sqrt{\rho_{1}^{V,\eps}\rho_{2}^{V,\eps}}\d x\d t\leq2\int_{0}^{T}\!\!\!\int_{\Omega}\left(\rho_{1}^{V,\eps}+\rho_{2}^{V,\eps}\right)\d x\d t,
\]
and the right-hand side is bounded since $\mu(t)\in Q$ for $t\in[0,T]$.
Hence, we conclude that $\|\rho_{1}^{V,\eps}-\rho_{2}^{V,\eps}\|_{\L^{1}([0,T]\times\Omega)}\rightarrow0$
as $\eps\to0$.

Using this convergence, we have that also $c_{i}^{\eps}\rightarrow c_{i}^{0}$
strongly in $\L^{1}([0,T]\times\Omega)$. Indeed, we have
\begin{align*}
c_{i}^{\eps}-w_{i}^{V}\frac{c_{1}^{0}+c_{2}^{0}}{w_{1}^{V}+w_{2}^{V}} & =w_{i}^{V}\left(\frac{c_{i}^{\eps}}{w_{i}^{V}}-\frac{c_{1}^{0}+c_{2}^{0}}{w_{1}^{V}+w_{2}^{V}}\right)\\
 & =w_{i}^{V}\left(\frac{c_{i}^{\eps}}{w_{i}^{V}}-\frac{c_{1}^{\eps}+c_{2}^{\eps}}{w_{1}^{V}+w_{2}^{V}}+\frac{c_{1}^{\eps}+c_{2}^{\eps}}{w_{1}^{V}+w_{2}^{V}}-\frac{c_{1}^{0}+c_{2}^{0}}{w_{1}^{V}+w_{2}^{V}}\right)\\
 & =(-1)^{i}\left(\frac{\frac{c_{2}^{\eps}}{w_{2}^{V}}-\frac{c_{1}^{\eps}}{w_{1}^{V}}}{w_{1}^{V}w_{2}^{V}\left(w_{1}^{V}+w_{2}^{V}\right)}\right)+\frac{w_{i}^{V}}{w_{1}^{V}+w_{2}^{V}}\left(c_{1}^{\eps}+c_{2}^{\eps}-(c_{1}^{0}+c_{2}^{0})\right)\ ,
\end{align*}
and both terms converge strongly to zero as $\eps\to0$ by convergence
of $\hat{c}^{\eps}\to\hat{c}^{0}$ and $\rho_{1}^{V,\eps}-\rho_{2}^{V,\eps}\rightarrow0$.
\end{proof}
Finally, we show that the limit $\mu^{0}=c^{0}\d x$ has an absolutely
continuous representative in the space of probability measures. To
do this, we exploit the characterization of absolutely continuous
curves as solutions of the continuity equation following \cite{AmGiSa05GFMS}.
\begin{prop}
\label{prop:AbsolutelyContinuousRepresentative}Let $(\mu^{\eps})_{\eps>0}$,
$\mu^{\eps}\in\L^{\infty}([0,T],Q)$ satisfying the a priori bounds
\eqref{eq:AprioriBoundsDissipationFunctional} and let $c^{0}$ be
the limit of the densities $c^{\eps}$. Then the coarse-grained slow
variable $\hat{\mu}=\mu_{1}^{0}+\mu_{2}^{0}=\left(c_{1}^{0}+c_{2}^{0}\right)\,\d x\in\L_{w}^{\infty}([0,T],\mathrm{Prob}(\Omega))$
has a representative (in time), which is absolutely continuous in
the space of probability measures equipped with the 2-Wasserstein
metric. Moreover, each component $\mu_{i}^{0}$ has an absolutely
continuous representative (in time), which is absolutely continuous
in the space of non-negative Radon measures equipped with the 1-Wasserstein
metric.
\end{prop}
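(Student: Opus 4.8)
The strategy is to characterise absolute continuity in Wasserstein space through the continuity equation, in the spirit of \cite[Thm.~8.3.1]{AmGiSa05GFMS}. The decisive structural point is that, although the component equations $\dot c_i^\eps = -\div J_i^\eps + b_i^\eps$ carry the reaction flux $b_i^\eps$, which cannot be controlled by the dissipation bound (Remark~\ref{rem:NoBoundOnFastReactionFluxes}), the constraint $b_1^\eps + b_2^\eps = 0$ in (gCE) makes the reaction flux cancel upon summation. Thus the coarse-grained variable $\hat c^\eps$ satisfies the \emph{pure} continuity equation $\dot{\hat c}^\eps = -\div\hat J^\eps$ with $\hat J^\eps := J_1^\eps + J_2^\eps$, and it is this slow variable, not the individual components, that inherits time regularity directly.

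For the first assertion I would pass to the limit in this coarse-grained continuity equation. Testing against $\phi\in\mathrm{C}_c^\infty((0,T)\times\Omega)$ and using the strong convergence $\hat c^\eps\to\hat c^0$ in $\L^1$ from Lemma~\ref{lem:ConvergenceToSlowManifoldAndStongCompactness} together with $\hat J^\eps\wstarlim\hat J^0=J_1^0+J_2^0$ from Lemma~\ref{lem:RegularityFluxesAndSpaceAndLiminf} yields $\dot{\hat\mu}^0 + \div\hat J^0 = 0$ in the distributional sense. To meet the hypothesis of the AGS characterisation I then verify square-integrability of the velocity: from $J_j^0\ll\mu_j^0$ (Lemma~\ref{lem:RegularityFluxesAndSpaceAndLiminf}) and $\mu_j^0\ll\hat\mu^0$ one has $\hat J^0\ll\hat\mu^0$, and writing the density $\hat v = \hat J^0/\hat c^0$, the flux-convexity estimate \eqref{eq:EstimateCoarseGrainedFluxes} combined with the liminf-inequality of Lemma~\ref{lem:LiminfEstimateAGS} gives $\int_0^T\!\!\int_\Omega |\hat J^0|^2/\hat c^0\,\d x\,\d t \le \max(\delta_1,\delta_2)\,\liminf_{\eps}\sum_{j}\int_0^T\!\!\int_\Omega |J_j^\eps|^2/(\delta_j c_j^\eps)\,\d x\,\d t < \infty$, i.e. $\int_0^T\|\hat v_t\|_{\L^2(\hat\mu^0(t))}^2\,\d t<\infty$. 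Since $\hat\mu^0\in\L_w^\infty([0,T],\Prob(\Omega))$ is a weakly-$*$ measurable family of probability measures solving the continuity equation with this velocity, \cite[Thm.~8.3.1]{AmGiSa05GFMS} furnishes the sought absolutely continuous representative with respect to $d_{\mathcal{W}_2}$.

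For the second assertion I would use the equilibration $\rho_1^{V,0}=\rho_2^{V,0}$ on the slow manifold (Lemma~\ref{lem:ConvergenceToSlowManifoldAndStongCompactness}), which forces the factorisation $c_i^0 = \theta_i\,\hat c^0$ with the fixed smooth positive weight $\theta_i := w_i^V/(w_1^V+w_2^V)\in\mathrm{C}^1(\Omega)$, hence $\mu_i^0(t)=\theta_i\,\hat\mu^0(t)$. The time regularity of $\mu_i^0$ is then transported from that of $\hat\mu^0$ through the Kantorovich--Rubinstein dual formulation of $d_{\mathcal{W}_1}$ (\cite{Edwa11KRT}): for any $\phi$ with $\|\phi\|_{\W^{1,\infty}(\Omega)}\le 1$ one has $\int_\Omega\phi\,\d(\mu_i^0(t_2){-}\mu_i^0(t_1)) = \int_\Omega(\theta_i\phi)\,\d(\hat\mu^0(t_2){-}\hat\mu^0(t_1))$, and because $\hat\mu^0$ keeps constant total mass the right-hand side is bounded by $\mathrm{Lip}(\theta_i\phi)\,d_{\mathcal{W}_1}(\hat\mu^0(t_2),\hat\mu^0(t_1))\le \|\theta_i\|_{\W^{1,\infty}(\Omega)}\,d_{\mathcal{W}_1}(\hat\mu^0(t_2),\hat\mu^0(t_1))$. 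Using $d_{\mathcal{W}_1}\le d_{\mathcal{W}_2}$ on the compact $\Omega$, the absolute continuity of $\hat\mu^0$ in $\mathcal{W}_2$ passes to $\mu_i^0$ in $(\mathcal{M}_+(\Omega),d_{\mathcal{W}_1})$; here the total mass of $\mu_i^0$ genuinely varies in time, which is why the limit lies in $\mathcal{M}_+(\Omega)$ rather than in $\Prob(\Omega)$.

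I expect the main obstacle to be precisely the absence of any control on the reaction fluxes $b_i^\eps$: the components $\mu_i^\eps$ therefore admit no limiting continuity equation with integrable right-hand side, and all compactness and regularity in time must be channelled through the slow (coarse-grained) variable and only afterwards returned to the components via the slow-manifold factorisation, which is available solely in the limit $\eps\to0$.
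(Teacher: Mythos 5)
Your proposal is correct and follows essentially the same route as the paper: cancellation of the reaction fluxes yields the pure continuity equation for $\hat{\mu}^{\eps}$, the flux-equilibration estimate \eqref{eq:EstimateCoarseGrainedFluxes} together with Lemma \ref{lem:LiminfEstimateAGS} gives $\hat{v}\in\L^{2}(\hat{\mu}^{0})$, and \cite[Thm.~8.3.1]{AmGiSa05GFMS} supplies the $d_{\mathcal{W}_{2}}$-absolutely continuous representative, after which the components are handled through the factorisation $\mu_{i}^{0}=\tfrac{w_{i}^{V}}{w_{1}^{V}+w_{2}^{V}}\hat{\mu}^{0}$ and the dual formulation of $d_{\mathcal{W}_{1}}$. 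The only (harmless) deviation is in the second part, where you transfer regularity metrically via $\mathrm{Lip}(\theta_{i}\phi)$ and $d_{\mathcal{W}_{1}}\leq d_{\mathcal{W}_{2}}$, whereas the paper integrates by parts in the continuity equation and bounds by $C\int_{t_{1}}^{t_{2}}\int_{\Omega}|\hat{J}|\,\d x\,\d t$; both arguments are valid.
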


\begin{proof}
The coarse-grained measures $\hat{\mu}^{\eps}$ satisfy continuity
equation $\dot{\hat{\mu}}^{\eps}+\div(\hat{J}^{\eps})=0$ in the sense
of distributions where $\hat{J}^{\eps}=J_{1}^{\eps}+J_{2}^{\eps}$
is the coarse-grained diffusion flux. Since the linear continuity
equation is stable under weak convergence, we conclude that also the
limits satisfy the same continuity equation $\dot{\hat{\mu}}^{0}+\div(\hat{J}^{0})=0$,
where $\hat{J}^{0}$ is the weak{*}-limit of $\hat{J}^{\eps}$ (see
Lemma \ref{lem:RegularityFluxesAndSpaceAndLiminf}). Using \eqref{eq:EstimateCoarseGrainedFluxes},
the bound on the dissipation functional implies a bound $\int_{0}^{T}\int_{\Omega}\mathsf{Q}(\hat{c}^{0},\hat{J}^{0})\d x\d t<\infty$.
Let us define the transport velocity $\hat{v}\in{\cal M}([0,T]\times\Omega,\R)$
\[
\hat{v}=\begin{cases}
\frac{\hat{J}}{\hat{c}} & \mathrm{for}\ \hat{c}>0\\
0 & \mathrm{for}\ \hat{c}=0
\end{cases}\ .
\]
Then $\int_{0}^{T}\int_{\Omega}\mathsf{Q}(\hat{c},\hat{J})\d x\d t=\frac{1}{2}\int_{0}^{T}\int_{\Omega}|\hat{v}|^{2}\hat{c}\d x\d t=\frac{1}{2}\int_{0}^{T}\int_{\Omega}|\hat{v}|^{2}\d\hat{\mu}\d t$
and the bound on the dissipation functional implies the bound on the
Borel velocity field $\|\hat{v}\|_{\L^{2}(\hat{\mu})}<\infty$. Hence,
by Theorem 8.3.1 from \cite{AmGiSa05GFMS} it follows that $t\mapsto\hat{\mu}(t)\in\left(\mathrm{Prob}(\Omega),d_{{\cal W}_{2}}\right)$
has a continuous representative which is absolutely continuous.

To prove time-regularity for $\mu_{i}^{0}$ for $i=1,2$, we first
observe that $\mu_{i}^{0}=\frac{w_{i}^{V}}{w_{1}^{V}+w_{2}^{V}}\hat{\mu}$
is a non-negative Radon measure. To show that it has an absolutely
continuous representative, we proceed as in Lemma \ref{LemmaRegInTime}
and exploit the dual formulation of the 1-Wasserstein distance on
the space of non-negative Radon measures, i.e. integrating against
Lipschitz functions (see e.g. \cite{Edwa11KRT}). Let $\phi\in\mathrm{C}^{1}(\Omega)$
with $\|\phi\|_{\W^{1,\infty}(\Omega)}\leq1$. Using the continuity
equations $\dot{\hat{\mu}}^{0}+\div(\hat{J}^{0})=0$, we conclude
for all $[t_{1},t_{2}]\subset[0,T]$ that 
\begin{align*}
\int_{\Omega}\phi\cdot(\d\mu_{i}^{0}(t_{2})-\d\mu_{i}^{0}(t_{1})) & =\int_{t_{1}}^{t_{2}}\langle\phi,\dot{\mu}_{i}^{0}\rangle\d t=\int_{t_{1}}^{t_{2}}\langle\phi\frac{w_{i}^{V}}{w_{1}^{V}+w_{2}^{V}},\dot{\hat{\mu}}\rangle\d t\\
 & \leq\int_{t_{1}}^{t_{2}}\!\!\int_{\Omega}\nabla\left(\phi\frac{w_{i}^{V}}{w_{1}^{V}+w_{2}^{V}}\right)\cdot\hat{J}\ \d x\ \d t\leq C\int_{t_{1}}^{t_{2}}\!\!\int_{\Omega}|\hat{J}|\ \d x\ \d t,
\end{align*}
where $C=C(w,V)$. The bound on the dissipation functional provides
again that the right-hand side is bounded for each interval $[t_{1},t_{2}]\subset[0,T]$.
Summing up, we conclude that $\mu_{i}^{0}$ has an absolutely continuous
representative, which proves the claim.
\end{proof}

\subsection{Liminf-estimate\label{subsec:Liminf-estimate}}

In this section, we state and prove the liminf-estimate of the $\Gamma$-convergence
result Theorem \ref{thm:GammaConvergenceDissipation}. Once the compactness
is established the proof of the liminf-estimate is comparatively easy.
\begin{thm}
\label{thm:LiminfEstimate} Let $\mu^{\eps}\to\mu^{0}$ in ${\cal \L^{\infty}}_{w}([0,T],Q)$
such that $\sup_{\eps\in]0,1]}\sup_{t\in[0,T]}\cE(\mu^{\eps})<\infty$.
Then, we have the liminf-estimate 
\begin{align*}
\liminf_{\eps\rightarrow0}\fD_{\eps}(\mu^{\eps})\geq\fD_{0}(\mu^{0}),
\end{align*}
where the limit dissipation functional is defined by 
\begin{align*}
\fD_{0}^{V}(\mu)=\begin{cases}
\int_{0}^{T}\cR_{\mathrm{eff}}(\mu,\dot{\mu})+\cR_{\mathrm{eff}}^{*}(\mu,-\D\cE^{V}(\mu))\d t, & \mu\in\mathrm{AC}([0,T],Q),\mu=c\,\d x\ \mathrm{a.e.\ in\ }[0,T]\\
~~\infty & \mathrm{otherwise}
\end{cases}
\end{align*}
with
\begin{align*}
\cR_{\mathrm{eff}}^{*}(\mu,\xi) & =\cR_{\mathrm{diff}}^{*}(\mu,\xi)+\chi_{\left\{ \xi_{1}=\xi_{2}\right\} }(\xi)\\
\cR_{\mathrm{eff}}(\mu,v) & =\inf\left\{ \sum_{j=1}^{2}\int_{\Omega}\widetilde{\mathsf{Q}}(\delta_{j}c_{j},J_{j})\d x:u_{1}+u_{2}=0,\left\{ \begin{array}{c}
v_{1}=-\div J_{1}+u_{1}\\
v_{2}=-\div J_{2}+u_{2}
\end{array}\right\} \right\} .
\end{align*}
\end{thm}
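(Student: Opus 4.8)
The plan is to reduce to the case of finite liminf and then bound the velocity part and the slope part of $\fD^V_\eps$ separately from below, feeding in the compactness assembled in Section~\ref{subsec:Compactness}. I would first assume $\liminf_{\eps\to0}\fD^V_\eps(\mu^\eps)<\infty$, as otherwise there is nothing to prove, and pass to a (non-relabeled) subsequence along which $\fD^V_\eps(\mu^\eps)$ converges to this liminf; then the a priori bounds \eqref{eq:AprioriBoundsDissipationFunctional} hold and every compactness statement of Section~\ref{subsec:Compactness} is at my disposal. Concretely, Lemma~\ref{lem:ConvergenceToSlowManifoldAndStongCompactness} gives strong convergence $c^\eps\to c^0$ in $\L^1([0,T]\times\Omega)$ together with the microscopic equilibrium $\rho^{V,0}_1=\rho^{V,0}_2$ a.e., Lemma~\ref{lem:RegularityFluxesAndSpaceAndLiminf} gives $J^\eps\wstarlim J^0$ with $J^0_j\ll\mu^0_j$ and $\nabla\rho^{V,\eps}\wstarlim\nabla\rho^{V,0}$, and Proposition~\ref{prop:AbsolutelyContinuousRepresentative} places $\mu^0=c^0\,\d x$ in $\AC([0,T],Q)$, so that $\mu^0$ is admissible for $\fD^V_0$. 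The equilibrium $\rho^{V,0}_1=\rho^{V,0}_2$ makes the two components of $-\D\cE^V(\mu^0)$ coincide, whence the constraint $\chi_{\{\xi_1=\xi_2\}}(-\D\cE^V(\mu^0))=0$ and no infinite penalty appears in the limit.

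For the slope part I would discard the nonnegative reaction Fisher information $\tfrac{2}{\eps}\int_\Omega\sqrt{w^V_1w^V_2}\,(\sqrt{\rho^{V,\eps}_1}-\sqrt{\rho^{V,\eps}_2})^2\,\d x\ge0$, whose only effect, the equilibration, has already been harvested above, and keep the diffusive slope $\tfrac12\int_\Omega\sum_j\delta_j w^V_j|\nabla\rho^{V,\eps}_j|^2/\rho^{V,\eps}_j\,\d x$. Since the perspective function $(a,p)\mapsto|p|^2/a$ is jointly convex and lsc, Lemma~\ref{lem:LiminfEstimateAGS} applied with $\mu^\eps_j\wstarlim\mu^0_j$ and $\nabla\rho^{V,\eps}_j\wstarlim\nabla\rho^{V,0}_j$ yields $\liminf_\eps\int_0^T\cR^*_\eps(\mu^\eps,-\D\cE^V(\mu^\eps))\,\d t\ge\int_0^T\cR^*_{\eff}(\mu^0,-\D\cE^V(\mu^0))\,\d t$, using $\cR^*_{\eff}=\cR^*_{\diff}+\chi_{\{\xi_1=\xi_2\}}$ and the vanishing of the constraint.

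For the velocity part I would likewise discard the nonnegative reaction contribution $\widetilde{\mathsf Q}$'s companion $\widetilde{\mathsf C}(\sqrt{c^\eps_1c^\eps_2}/\eps,b^\eps_2)\ge0$ and retain only the diffusive flux terms. Joint convexity and lower semicontinuity of $(a,p)\mapsto\widetilde{\mathsf Q}(\delta_j a,p)$, again through Lemma~\ref{lem:LiminfEstimateAGS} with $c^\eps_j\to c^0_j$ and $J^\eps_j\wstarlim J^0_j$, give the bound $\liminf_\eps\int_0^T\cR_\eps(\mu^\eps,\dot\mu^\eps)\,\d t\ge\int_0^T\sum_j\int_\Omega\widetilde{\mathsf Q}(\delta_j c^0_j,J^0_j)\,\d x\,\d t$. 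To see that the right-hand side dominates $\int_0^T\cR_{\eff}(\mu^0,\dot\mu^0)\,\d t$, I define the effective reaction flux $u^0_j:=\dot c^0_j+\div J^0_j$ in the distributional sense; then $(J^0,u^0)$ is an admissible competitor in the infimum defining $\cR_{\eff}$ as soon as $u^0_1+u^0_2=0$. This last identity follows by summing the two equations of (gCE): since $b^\eps_1+b^\eps_2=0$ the reaction flux cancels, giving $\dot{\hat{c}}^{\eps}+\div\hat{J}^{\eps}=0$, which passes to the weak limit as $\dot{\hat{c}}^{0}+\div\hat{J}^{0}=0$, i.e.\ $u^0_1+u^0_2=0$. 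Adding the velocity and slope lower bounds then gives $\liminf_\eps\fD^V_\eps(\mu^\eps)\ge\fD^V_0(\mu^0)$.

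The main obstacle is exactly this identification of the limit continuity equation. By Remark~\ref{rem:NoBoundOnFastReactionFluxes} the fast reaction fluxes $b^\eps_2$ enjoy no compactness, so the individual equations $\dot c^\eps_j=-\div J^\eps_j+b^\eps_j$ cannot be passed to the limit on their own. The way around it is to pass to the limit only in the summed, coarse-grained equation, where the reaction flux disappears, and to recover the limiting effective reaction flux merely as the distributional residual $\dot c^0_j+\div J^0_j$. This is viable precisely because the effective potential $\cR_{\eff}$ penalizes only the diffusive flux through $\widetilde{\mathsf Q}$ and imposes on $u$ nothing beyond the linear constraint $u_1+u_2=0$, which the limit supplies automatically.
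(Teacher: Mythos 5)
Your proposal is correct and follows essentially the same route as the paper's proof: drop the nonnegative reaction terms, apply the lower-semicontinuity result of Lemma \ref{lem:LiminfEstimateAGS} to the diffusive velocity and slope parts using the compactness of Section \ref{subsec:Compactness}, and identify the limiting constraint $u_1+u_2=0$ only through the summed continuity equation, where the uncontrolled fast reaction flux cancels. The only cosmetic difference is a sign convention in defining $u_j^0$ from the distributional residuals $\dot c_j^0+\div J_j^0$, which is immaterial since the constraint set is symmetric under $u\mapsto -u$.
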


\begin{proof}
We may assume that $\fD_{\eps}(\mu^{\eps})\leq C<\infty$ (otherwise
the claim is trivial). For the given curves $t\mapsto\mu^{\eps}(t)\in Q$
take diffusive fluxes $J^{\eps}$ and reactive fluxes $b^{\eps}$,
which satisfy the generalized continuity equation 
\[
(c,J,b)\in\mathrm{{(gCE)}}\ \ \Leftrightarrow\ \ \left\{ b_{1}+b_{2}=0\ \mathrm{and}\ \left\{ \begin{array}{c}
\dot{c}_{1}=-\div J_{1}+b_{1}\\
\dot{c}_{2}=-\div J_{2}+b_{2}
\end{array}\right\} \right\} ,
\]
and approximate the infimum in $\fD_{\eps}(\mu^{\eps})$ arbitrarily
close, i.e. 
\begin{align*}
\fD_{\eps}(\mu_{\eps})+\eps\geq\int_{0}^{T}\cD_{\eps}(\mu_{\eps},J^{\eps},b^{\eps})\d t.
\end{align*}
The integrand ${\cal D}_{\eps}$ consists of a velocity and a slope
part and both of them split into a reaction and a diffusion part:
\begin{align*}
{\color{black}{\color{black}\cD_{\eps}}}{\color{black}(\mu,J,b)}= & \int_{\Omega}\sum_{j=1}^{2}\widetilde{\mathsf{Q}}\left(\delta_{j}c_{j},J_{j}\right)\d x+\int_{\Omega}\widetilde{\mathsf{C}}\left(\frac{\sqrt{c_{1}c_{2}}}{\eps},b_{2}(x)\right)\d x+\\
 & \quad+\frac{1}{2}\int_{\Omega}\sum_{j=1}^{2}\delta_{j}w_{j}^{V}\frac{\left|\nabla\rho_{j}^{V}\right|^{2}}{\rho_{j}^{V}}\d x+\frac{2}{\eps}\int_{\Omega}\sqrt{w_{1}^{V}w_{2}^{V}}\left(\sqrt{\rho_{1}^{V}}-\sqrt{\rho_{2}^{V}}\right)^{2}\d x\!\\
=: & \mathcal{V}_{\mathrm{diff}}(\mu,J)+\mathcal{V}_{\mathrm{react,}\eps}(\mu,b)+\mathcal{S}_{\mathrm{diff}}(\mu)+\mathcal{S}_{\mathrm{react,}\eps}(\mu)\ .
\end{align*}

Clearly, we also have $\int_{0}^{T}\cD_{\eps}(\mu_{\eps},J^{\eps},b^{\eps})\d t\leq C+\eps<\infty$.
By Lemma \ref{lem:ConvergenceToSlowManifoldAndStongCompactness},
we conclude compactness for the densities $c^{\eps}\rightarrow c^{0}$
in $\L^{1}([0,T]\times\Omega\times\left\{ 1,2\right\} )$ and by Lemma
\ref{lem:RegularityFluxesAndSpaceAndLiminf} that $J^{\eps}\wstarlim J^{0}$
in ${\cal M}([0,T]\times\Omega\times\left\{ 1,2\right\} )$. Using
the lower-semicontinuity result from Lemma \ref{lem:LiminfEstimateAGS}
(which implies the liminf-estimates for ${\cal V}_{\diff}$ and ${\cal S}_{\diff}$)
and that ${\cal V}_{\mathrm{react},\eps},{\cal S}_{\mathrm{react},\eps}\geq0$,
we obtain the estimate
\begin{align*}
\liminf_{\eps\rightarrow0}\int_{0}^{T}\mathcal{D}_{\eps}(\mu^{\eps}) & \d t\geq\int_{0}^{T}\left\{ \int_{\Omega}\sum_{j=1}^{2}\widetilde{\mathsf{Q}}(\delta_{j}c_{j}^{0},J_{j}^{0})\d x+\frac{1}{2}\int_{\Omega}\sum_{j=1}^{2}\delta_{j}w_{j}^{V}\frac{|\nabla\rho_{j}^{0}|^{2}}{\rho_{j}^{0}}\d x\right\} \d t.
\end{align*}

Let us define $A_{i}^{\eps}=\dot{c}_{i}^{\eps}+\div J_{i}^{\eps}$.
We conclude convergence for $A_{i}^{\eps}\rightarrow A_{i}^{0}$ in
the sense of distributions, and, moreover, we have $A_{1}^{\eps}+A_{2}^{\eps}\rightarrow\dot{c}_{1}^{0}+\dot{c}_{2}^{0}+\div J_{1}^{0}+\div J_{2}^{0}=0$.
Let us define $u_{1}:=A_{0}^{2}=\dot{c}_{2}^{0}+\div J_{2}^{0}$ and
$u_{2}:=A_{0}^{1}=\dot{c}_{1}^{0}+\div J_{1}^{0}$. Then $u_{1}+u_{2}=0$,
$A_{1}^{0}+u_{1}=0$ and $A_{2}^{0}+u_{2}=0$. In particular, we conclude
the pointwise estimate
\[
\int_{\Omega}\sum_{j=1}^{2}\widetilde{\mathsf{Q}}(\delta_{j}c_{j}^{0},J_{j}^{0})\d x\geq\inf_{(J,u)}\left\{ \int_{\Omega}\sum_{j=1}^{2}\widetilde{\mathsf{Q}}(\delta_{j}c_{j}^{0},J_{j}^{0})\d x:\left\{ \begin{array}{c}
\dot{c}_{1}+\div J_{1}+u_{1}=0\\
\dot{c}_{2}+\div J_{2}+u_{2}=0\\
u_{1}+u_{2}=0
\end{array}\right\} \right\} \ ,
\]
which finally establishes the liminf-estimate 
\[
\liminf_{\eps\rightarrow0}\int_{0}^{T}\mathcal{D}_{\eps}(\mu^{\eps})\d t\geq\fD_{0}^{V}(\mu).
\]
\end{proof}

\subsection{Construction of the recovery sequence\label{subsec:ConstructionRecoverySequence}}

In this section, we construct the recovery sequence for the functional
$\fD_{0}^{V}$ to finish the $\Gamma$-convergence result in Theorem
\ref{thm:GammaConvergenceDissipation}. To be precise, we will show
the following:
\begin{thm}
\label{thm:LimsupEstimate} Let $\mu^{0}\in\L_{w}^{\infty}([0,T]\times\Omega,\R_{\geq0}^{2})$
such that the a priori bounds $\fD_{0}^{V}(\mu^{0})<\infty$ and $\mathrm{ess\,sup}_{t\in[0,T]}\cE(\mu^{0}(t))<\infty$
hold. Then there is a sequence $(\mu^{\eps})_{\eps>0}$, $\mu^{\eps}\in\mathrm{AC}([0,T],Q)$,
$\sup_{\eps>0}\ \mathrm{ess\,sup}_{t\in[0,T]}\cE(\mu^{\eps}(t))<\infty$,
such that the densities converge $c^{\eps}\to c^{0}$ strongly in
$\L^{1}([0,T]\times\Omega\times\left\{ 1,2\right\} )$ and we have
$\fD_{\eps}^{V}(\mu^{\eps})\rightarrow\fD_{0}^{V}(\mu^{0})$.
\end{thm}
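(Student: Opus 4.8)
The plan is to realize the recovery sequence by leaving the concentrations \emph{on} the limiting slow manifold and reconstructing only the fast reaction flux, which is invisible in $\fD_0^V$ but unavoidable in the generalized continuity equation (gCE) for $\eps>0$. Since $\fD_0^V(\mu^0)<\infty$ enforces the microscopic equilibrium $\rho_1^{V,0}=\rho_2^{V,0}$, the densities are rigidly determined by the coarse-grained concentration $\hat c^0=c_1^0+c_2^0$ through \eqref{eq:RelationCoarseGrainedConcentrationsOriginalConcentrations}, and Proposition \ref{prop:DissipationFunctionalInCoarseGrainedVariables} gives $\fD_0^V(\mu^0)=\hat\fD(\hat\mu^0)$ together with an optimal coarse-grained diffusion flux $\hat J^0$ solving $\dot{\hat\mu}^0+\div\hat J^0=0$. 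The essential observation is that on the slow manifold the reaction Fisher information $\frac{2}{\eps}\int\sqrt{w_1^Vw_2^V}(\sqrt{\rho_1^V}-\sqrt{\rho_2^V})^2\,\d x$ vanishes identically, so the only $\eps$-dependent contribution left to control is the reaction \emph{velocity} term.

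Assume first that $\hat c^0$ is smooth and bounded away from zero. I would then take the constant sequence $c^\eps:=c^0$, so that strong convergence $c^\eps\to c^0$ in $\L^1$ and the uniform energy bound are immediate. The diffusion fluxes are reconstructed from $\hat J^0$ by the proportionality \eqref{eq:ReconstructionFluxes}, which by Lemma \ref{lem:Convexity} saturates the convexity inequality \eqref{eq:EstimateCoarseGrainedFluxes}; hence $\sum_{j}\widetilde{\mathsf{Q}}(\delta_jc_j^0,J_j)=\widetilde{\mathsf{Q}}(\hat\delta^V\hat c^0,\hat J^0)$ pointwise, and the diffusion velocity part together with the (unchanged) diffusion Fisher information reproduces exactly $\hat\fD(\hat\mu^0)=\fD_0^V(\mu^0)$. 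The reaction flux is then forced by (gCE), $b_i=\dot c_i^0+\div J_i=-\tfrac{w_i^V}{\hat w^V}\div\hat J^0+\div J_i$, which automatically satisfies $b_1+b_2=\dot{\hat c}^0+\div\hat J^0=0$; this is the precise sense in which ``diffusion causes the reaction flux.'' Since $\hat J^0$ is smooth, $b_2$ is a bounded function, in particular $b_2\in\L^{\mathsf{C}}([0,T]\times\Omega)$.

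It remains to show that the reaction velocity vanishes, i.e.\ $\int_0^T\!\int_\Omega\widetilde{\mathsf{C}}(\tfrac{\sqrt{c_1^0c_2^0}}{\eps},b_2)\,\d x\,\d t\to0$. Writing $\widetilde{\mathsf{C}}(a,x)=a\,\mathsf{C}(x/a)$ and using the upper bound in \eqref{eq:InequalityForC} gives the pointwise estimate $\widetilde{\mathsf{C}}(\tfrac{\sqrt{c_1^0c_2^0}}{\eps},b_2)\le 2|b_2|\log\!\big(\tfrac{\eps|b_2|}{\sqrt{c_1^0c_2^0}}+1\big)$, whose integrand tends to $0$ as $\eps\to0$ and is dominated; this is exactly the phenomenon computed in Remark \ref{rem:NoBoundOnFastReactionFluxes} and recorded in Lemma \ref{lem:BinLCimpliesCepstozero}. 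Dominated convergence then yields $\mathcal{V}_{\react,\eps}\to0$, so $\limsup_{\eps\to0}\fD_\eps^V(\mu^0)\le\fD_0^V(\mu^0)$, and together with the liminf-estimate of Theorem \ref{thm:LiminfEstimate} this gives $\fD_\eps^V(\mu^0)\to\fD_0^V(\mu^0)$ in the smooth case.

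The main obstacle is removing the smoothness assumption, since a generic $\mu^0$ with $\fD_0^V(\mu^0)<\infty$ provides an optimal $\hat J^0$ that is merely a measure, so that $\div\hat J^0$---and hence the reconstructed $b_2$---need not be a function, let alone Orlicz-integrable. I would therefore precede the construction by a regularization step: mollify $\hat c^0$ and $\hat J^0$ in space and time and truncate $\hat c^0$ away from zero to obtain smooth, strictly positive approximations $\hat c^{0,\delta},\hat J^{0,\delta}$ that still solve the coarse-grained continuity equation and satisfy $\hat\fD(\hat\mu^{0,\delta})\to\hat\fD(\hat\mu^0)$ and $\hat c^{0,\delta}\to\hat c^0$ strongly; defining $c_i^{0,\delta}:=\tfrac{w_i^V}{\hat w^V}\hat c^{0,\delta}$ keeps each approximant on the slow manifold, so that the reaction Fisher information stays zero. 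The delicate point is to verify that this regularization does not increase the diffusion velocity and Fisher-information functionals in the limit---standard but technical convexity and lower-semicontinuity arguments in the spirit of \cite{AmGiSa05GFMS}. Applying the smooth construction to each $\mu^{0,\delta}$ and extracting a diagonal sequence $\delta(\eps)\to0$ then produces the desired recovery sequence for the original $\mu^0$.
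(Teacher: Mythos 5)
Your proposal is correct in substance and follows the same backbone as the paper: put the approximants on the slow manifold, reconstruct $(c,J)$ from $(\hat c,\hat J)$ by the proportionality \eqref{eq:Reconstruction}, let (gCE) force the reaction flux $b_2$, observe that the reaction Fisher information vanishes identically there, and kill the remaining term $\int_0^T\!\!\int_\Omega\widetilde{\mathsf{C}}(\sqrt{c_1c_2}/\eps,b_2)\,\d x\,\d t$ exactly as in Lemma \ref{lem:BinLCimpliesCepstozero}. Where you genuinely diverge is the gluing step: you propose an abstract diagonal extraction $\delta(\eps)\to0$, whereas the paper couples the approximation parameters to $\eps$ quantitatively (Proposition \ref{prop:ConvOfCTermWithSmoothing}: positivity level $\gamma\gtrsim\eps^{1-\lambda}$, mollification rate $\|\dot{\hat c}^{\epsilon}\|_{\L^{\tilde p}}\lesssim\eps^{-\alpha}$ with $d+1\le\lambda/(2\alpha)$), which in turn requires the interpolation machinery of Lemmas \ref{lem:IntegrabilityConcentrationc} and \ref{lem:IntegrabilityFluxJ} to secure $\hat J\in\L^{\tilde p}$ with $\tilde p=\tfrac{2d+2}{2d+1}>1$. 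Your softer route avoids that exponent bookkeeping entirely, since for each fixed $\delta$ the reconstructed $b_2^\delta$ is in $\L^{\mathsf C}$ and dominated convergence suffices; the topology on trajectories is metrizable on energy-bounded sets, so the diagonal is legitimate, at the price of losing the explicit rates. Two cautions on your regularization step: (i) the positivity must be achieved by an additive shift $\hat c\mapsto(\hat c+2\gamma)/Z_\gamma$ with the flux rescaled by $1/Z_\gamma$ (as in Lemma \ref{lem:RecoverySequencePositivity}) — a pointwise truncation $\max(\hat c,\gamma)$ destroys the continuity equation; and (ii) the paper mollifies in \emph{time only}, precisely because the linear continuity equation and the no-flux boundary condition are then preserved for free, whereas spatial mollification on the bounded domain $\Omega$ would require an extension or boundary-layer argument that you would still have to supply. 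With those two points repaired, your argument closes.
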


Using Proposition \ref{prop:DissipationFunctionalInCoarseGrainedVariables},
we see that the limit functionals do not contain more information
than the functionals in coarse-grained variables and it holds $\fD_{0}^{V}(\mu^{0})=\hat{\fD}(\hat{\mu}^{0})$.
Hence, we may reconstruct the dissipation functional with the corresponding
diffusion and reaction flux $(J,b)$ from the coarse-grained variables
$(\hat{c},\hat{J})$.

The dissipation functional $\fD_{0}^{V}$ is defined on the space
of general fluctuations around the solution of the evolution equation
\eqref{eq:LRDS}. These fluctuations are neither strictly positive
nor smooth. The proof of the limsup-estimate is done in several steps,
which are elaborated in the next lemmas. The bound $\fD_{0}^{V}(\mu^{0})<\infty$
can be assumed without loss of generality because the other case is
already treated in the liminf-estimate.
\begin{proof}[Proof of Theorem \ref{thm:LimsupEstimate}]
We do the reconstruction in three steps using different approximation
methods. We will do the following steps:
\begin{enumerate}
\item Proposition \ref{prop:ConvOfCTermWithSmoothing} shows that for $\mu^{0}=c^{0}\d x$
with sufficiently smooth and positive density $c^{0}$ the constant
sequence $\mu^{\epsilon,\gamma}=\mu^{0}$ satisfies $\left|\fD_{\eps}(\mu^{\epsilon,\gamma})-\fD_{0}(\mu^{\epsilon,\gamma})\right|\rightarrow0$.
\item Lemma \ref{lem:RecoverySequencePositivity} overcomes the positivity
assumption, i.e. it shows that for all $\mu^{0}=c^{0}\d x$ there
is a positive $c^{\gamma}$ such that $c^{\gamma}\rightarrow c^{0}$
and $\fD_{0}(\mu^{\gamma})\rightarrow\fD_{0}(\mu^{0})$ as $\gamma\to0$.
\item A mollification argument as in \cite[Lemma 8.1.10]{AmGiSa05GFMS}
and stated in Lemma \ref{lem:ConvolutionRecovery} allows us to overcome
regularity by smoothing, which shows $\fD_{0}(\mu^{\epsilon,\gamma})\rightarrow\fD_{0}(\mu^{\gamma})$
as $\epsilon\to0$.
\end{enumerate}
Hence, defining the recovery sequence $\mu^{\eps}:=\mu^{\epsilon,\gamma}$,
we have
\[
\left|\fD_{\eps}(\mu^{\eps})-\fD_{0}(\mu^{0})\right|\leq\left|\fD_{\eps}(\mu^{\epsilon,\gamma})-\fD_{0}(\mu^{\epsilon,\gamma})\right|+\left|\fD_{0}(\mu^{\epsilon,\gamma})-\fD_{0}(\mu^{\gamma})\right|+\left|\fD_{0}(\mu^{\gamma})-\fD_{0}(\mu^{0})\right|,
\]
where the first term tends to zero by the first reconstruction step,
the second term tends to zero by the third reconstruction step and
the third term tends to zero by the second reconstruction step, which
in total proves the desired convergence.
\end{proof}
Before performing the three recovery steps in Section \ref{subsec:Auxiliary-results-for-Recovery-Sequence},
we first illustrate the general idea of constructing the recovery
sequence by forgetting about positivity and regularity issues for
the first moment.

\subsubsection{Construction of recovery sequence for smooth and positive measures}

To show the general idea, let us firstly assume that the density of
$\hat{\mu}$ is sufficiently smooth and positive, i.e. we assume that
its Lebesgue density satisfies $\hat{c}\geq\frac{1}{C}>0$ on $\Omega\times[0,T]$
and has a enough regularity that will be specified below. Let $\hat{J}$
be the diffusion flux which provides the minimum in $\hat{\fD}(\hat{\mu}^{0})=\fD_{0}^{V}(\mu)$
and satisfies $\dot{\hat{c}}+\mathrm{div}(\hat{J})=0$. We define
the reconstructed variables by 
\begin{align}
c_{1} & =\frac{w_{1}^{V}}{w_{1}^{V}+w_{2}^{V}}\hat{c},\quad c_{2}=\frac{w_{2}^{V}}{w_{1}^{V}+w_{2}^{V}}\hat{c},\quad J_{1}=\frac{\delta_{1}w_{1}^{V}}{\delta_{1}w_{1}^{V}+\delta_{2}w_{2}^{V}}\hat{J},\quad J_{2}=\frac{\delta_{2}w_{2}^{V}}{\delta_{1}w_{1}^{V}+\delta_{2}w_{2}^{V}}\hat{J}.\nonumber \\
b_{1} & =\left(\frac{\delta_{1}-\delta_{2}}{\delta_{1}w_{1}^{V}+\delta_{2}w_{2}^{V}}\frac{w_{1}^{V}w_{2}^{V}}{w_{1}^{V}+w_{2}^{V}}\right)\div\hat{J}+\hat{J}\cdot\nabla\left(\frac{\delta_{1}w_{1}^{V}}{\delta_{1}w_{1}^{V}+\delta_{2}w_{2}^{V}}\right),\quad b_{2}=-b_{1}.\label{eq:Reconstruction}
\end{align}

The reconstructed concentrations $c$ and diffusion fluxes $J=(J_{1},J_{2})$
are proportional to the coarse grained concentration $\hat{c}$ and
diffusion flux $\hat{J}$, respectively. On the coarse-grained level,
which considers only one species there is no reaction flux anymore.
(This changes when considering large reaction-diffusion system as
explained in Section \ref{sec:Multispecies}). The reactive flux $b=(b_{1},b_{2})$
is given as a function of the coarse-grained diffusion flux $\hat{J}$,
which means that in the limit the diffusion determines the hidden
reaction.

Concerning regularity issues, we immediately observe the following.
Since $w^{V}$ is smooth and positive, $c_{1},c_{2}$ have the same
regularity as $\hat{c}$ and also $J_{1},J_{2}$ have the same regularity
as $\hat{J}$. Only the reaction fluxes $b_{i}$ are a priori not
well-defined (e.g. in $\L^{1}$) if $\div\hat{J}$ and $\hat{J}$
are not regular enough. This means, that the reaction flux between
the fast-connected species is not well-defined for general $\hat{J}\in{\cal M}(\Omega\times[0,T],\R^{3})$.
Note, that regularity assumptions for $\div\hat{J}$ are not needed
if $\delta_{1}=\delta_{2}$, i.e. if both species diffuse with the
same diffusion constant. In particular, in this situation no regularization
argument as in Lemma \ref{lem:ConvolutionRecovery} is necessary.
Moreover, no additional regularity for $\hat{J}$ is needed if $\frac{\delta_{1}w_{1}^{V}}{\delta_{1}w_{1}^{V}+\delta_{2}w_{2}^{V}}=\theta\in\ ]0,1[$
is constant. This is equivalent to 
\[
V_{1}(x)-V_{2}(x)=\log\left(\frac{1-\theta}{\theta}\frac{\delta_{1}}{\delta_{2}}\frac{\beta}{\alpha}\right)=\mathrm{const},
\]
which means that the potentials $V_{1},V_{2}$ differ in a constant
on $\Omega$. In particular, this implies that for the coarse-grained
potential $\hat{V}$ we have $\nabla\hat{V}=\nabla V_{1}=\nabla V_{2}$.
As we will see, enough regularity for $\hat{J}$ is already obtained
from bounds on the dissipation functional. Of course, regularity properties
for $\div\hat{J}$ and $\hat{J}$ are independent of each other.

So let us assume for the moment that $b_{i}$ is well-defined. Then
we conclude $(c,J,b)\in\mathrm{(gCE)}$, because we have
\begin{align*}
\dot{c}_{1}+\div J_{1} & =\frac{w_{1}^{V}}{w_{1}^{V}+w_{2}^{V}}\dot{\hat{c}}+\div\left(\frac{\delta_{1}w_{1}^{V}}{\delta_{1}w_{1}^{V}+\delta_{2}w_{2}^{V}}\hat{J}\right)=\\
 & =\frac{w_{1}^{V}}{w_{1}^{V}+w_{2}^{V}}\dot{\hat{c}}+\hat{J}\cdot\nabla\left(\frac{\delta_{1}w_{1}^{V}}{\delta_{1}w_{1}^{V}+\delta_{2}w_{2}^{V}}\right)+\left(\frac{\delta_{1}w_{1}^{V}}{\delta_{1}w_{1}^{V}+\delta_{2}w_{2}^{V}}\right)\div\hat{J}\\
 & =-\frac{w_{1}^{V}}{w_{1}^{V}+w_{2}^{V}}\div\hat{J}+\hat{J}\cdot\nabla\left(\frac{\delta_{1}w_{1}^{V}}{\delta_{1}w_{1}^{V}+\delta_{2}w_{2}^{V}}\right)+\left(\frac{\delta_{1}w_{1}^{V}}{\delta_{1}w_{1}^{V}+\delta_{2}w_{2}^{V}}\right)\div\hat{J}=b_{1},
\end{align*}
where we used that $(\hat{c},\hat{J})$ solves $\dot{\hat{c}}+\mathrm{div}\hat{J}=0$.
Similarly, we see that $\dot{c}_{2}+\div J_{2}=b_{2}$ and, by definition,
we have $b_{1}+b_{2}=0$. Moreover, boundary properties of $\hat{J}$
remain for $J=(J_{1},J_{2})$.

Since $\frac{c_{1}}{w_{1}^{V}}=\frac{c_{2}}{w_{2}^{V}}$, we conclude
that $\fD_{\eps}^{V}(\mu)\leq\fD_{0}(\mu)+\int_{0}^{T}\int_{\Omega}\widetilde{\mathsf{C}}\left(\frac{\sqrt{c_{1}c_{2}}}{\eps},b_{2}\right)\d x\d t$.
That means, that for proving that the constant sequence $\mu^{\eps}=\mu$
is a recovery sequence, it suffice to show that $\int_{0}^{T}\mathsf{\int_{\Omega}}\widetilde{\mathsf{C}}\left(\frac{\sqrt{c_{1}c_{2}}}{\eps},b_{2}\right)\d x\d t\rightarrow0$
as $\eps\to0$. This is, in fact, shown in the next lemma under the
assumption that $\div\hat{J},\hat{J}\in\L^{\mathsf{C}}([0,T]\times\Omega)$.
The proof basically uses the monotonicity property of the Legendre
dual function $\widetilde{\mathsf{C}}(a_{1},b)\leq\widetilde{\mathsf{C}}(a_{2},b)$
as $a_{1}\geq a_{2}$ (see Lemma \ref{lem:Convexity}), its superlinear
growth and the dominated convergence theorem.
\begin{lem}
\label{lem:BinLCimpliesCepstozero}Let $\hat{c}\in\L^{1}(\Omega\times[0,T])$
with $\hat{c}\geq\frac{1}{C}$ a.e. in $[0,T]\times\Omega$ for a
constant $C>0$, and let $\hat{J}:\Omega\to\R^{d}$ satisfy $\div\hat{J},|\hat{J}|\in\L^{\mathsf{C}}([0,T]\times\Omega)$.
Then $\int_{0}^{T}\!\!\mathsf{\int_{\Omega}}\widetilde{\mathsf{C}}\left(\frac{\sqrt{c_{1}c_{2}}}{\eps},b_{2}\right)\d x\d t\rightarrow0$
as $\eps\to0$.
\end{lem}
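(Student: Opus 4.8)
The plan is to apply the dominated convergence theorem on $[0,T]\times\Omega$, using the monotonicity of $\widetilde{\mathsf{C}}$ in its first argument (Lemma \ref{lem:Convexity}) to produce a majorant that is independent of $\eps$, and the two-sided superlinear bound \eqref{eq:InequalityForC} to verify that this majorant is integrable.

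First I would check that the reconstructed reaction flux $b_2=-b_1$ from \eqref{eq:Reconstruction} lies in $\L^{\mathsf{C}}([0,T]\times\Omega)$. By construction $b_2$ is a linear combination of $\div\hat{J}$ and $\hat{J}$ whose coefficients are built from $w^V$ and $\delta$; since $w^V$ is smooth and strictly positive on the compact domain $\Omega$, these coefficients and their gradients are bounded, so there are constants with $|b_2|\leq C_1|\div\hat{J}|+C_2|\hat{J}|$ pointwise. Because $\div\hat{J},|\hat{J}|\in\L^{\mathsf{C}}$ by hypothesis and $\L^{\mathsf{C}}$ is a Banach function space closed under the bounded-coefficient combination above (this is exactly the content of $\widetilde{\L}^{\mathsf{C}}=\L^{\mathsf{C}}$ noted after \eqref{eq:InequalityForC}), it follows that $b_2\in\L^{\mathsf{C}}$, i.e. $\int_0^T\!\int_\Omega \mathsf{C}(b_2)\,\d x\,\d t<\infty$. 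At the same time, from $\hat{c}\geq 1/C$ and $\sqrt{c_1c_2}=\frac{\sqrt{w_1^Vw_2^V}}{w_1^V+w_2^V}\,\hat{c}$ with the prefactor bounded below by a positive constant $\kappa$ on $\Omega$, I obtain the uniform lower bound $\sqrt{c_1c_2}\geq\kappa/C>0$.

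For the majorant, note that for $\eps\in(0,1]$ we have $\frac{\sqrt{c_1c_2}}{\eps}\geq\sqrt{c_1c_2}$, so the monotonicity property of Lemma \ref{lem:Convexity} (valid since $\mathsf{C}(0)=0$) gives the $\eps$-independent bound
\[
\widetilde{\mathsf{C}}\!\left(\tfrac{\sqrt{c_1c_2}}{\eps},b_2\right)\leq\widetilde{\mathsf{C}}\!\left(\sqrt{c_1c_2},b_2\right)=:g .
\]
Writing $\widetilde{\mathsf{C}}(a,r)=a\,\mathsf{C}(r/a)$, applying the upper bound in \eqref{eq:InequalityForC}, and using $\sqrt{c_1c_2}\geq\kappa/C$, one estimates $g\leq 2|b_2|\log\!\big(\tfrac{C}{\kappa}|b_2|+1\big)$. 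Splitting the logarithm via $\log(Ms+1)\leq\log M+\log(s+1)$ yields $g\leq c_*\big(|b_2|+|b_2|\log(|b_2|+1)\big)$, whose right-hand side is integrable: $|b_2|\in\L^{\mathsf{C}}\subset\L^1$, and the lower bound in \eqref{eq:InequalityForC} gives $\int_0^T\!\int_\Omega|b_2|\log(|b_2|+1)\,\d x\,\d t\leq 2\int_0^T\!\int_\Omega\mathsf{C}(b_2)\,\d x\,\d t<\infty$.

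Finally I would identify the pointwise limit: the same computation shows $\widetilde{\mathsf{C}}\!\big(\tfrac{\sqrt{c_1c_2}}{\eps},b_2\big)\leq 2|b_2|\log\!\big(\tfrac{\eps|b_2|}{\sqrt{c_1c_2}}+1\big)\to 0$ a.e. as $\eps\to 0$, since $b_2$ is finite a.e. and $\log 1=0$. Combining this with the $\eps$-uniform majorant $g$, dominated convergence gives the claim. The only step that is not automatic is the integrability of $g$, which is precisely where the membership $b_2\in\L^{\mathsf{C}}$ — i.e. the superlinear control of the reconstructed reaction flux — is essential; the monotonicity in the first argument is what neutralizes the apparent blow-up of the prefactor $\sqrt{c_1c_2}/\eps$ as $\eps\to 0$.
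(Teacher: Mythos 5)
Your proof is correct and follows essentially the same route as the paper: monotonicity of $\widetilde{\mathsf{C}}$ in its first argument from Lemma \ref{lem:Convexity}, the two-sided bound \eqref{eq:InequalityForC} to show $b_{2}\in\L^{\mathsf{C}}\subset\L^{1}$ and to build an $\eps$-independent integrable majorant, and dominated convergence. Your direct verification of the a.e.\ pointwise limit via $2|b_{2}|\log\bigl(\eps|b_{2}|/\sqrt{c_{1}c_{2}}+1\bigr)\to0$ is in fact slightly cleaner than the paper's subsequence argument, but the substance is identical.
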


\begin{proof}
Since $\hat{c}\geq\frac{1}{C}$ a.e., Lemma \ref{lem:Convexity} yields
$\widetilde{\mathsf{C}}\left(\frac{\sqrt{c_{1}c_{2}}}{\eps},b_{2}\right)\leq\widetilde{\mathsf{C}}\left(\frac{1}{C\eps},b_{2}\right)=\frac{1}{C\eps}\mathsf{C}\left(C\eps b_{2}\right)$.
Moreover, we have the estimate
\[
\tfrac{1}{2}|r|\log(|r|+1)\leq\mathsf{C}(r)\leq2|r|\log(|r|+1),
\]
which implies
\[
\int_{0}^{T}\mathsf{\int_{\Omega}}\widetilde{\mathsf{C}}\left(\frac{\sqrt{c_{1}c_{2}}}{\eps},b_{2}\right)\d x\d t\leq\tfrac{1}{C\eps}\int_{0}^{T}\int_{\Omega}\mathsf{C}\left(C\eps b_{2}\right)\d x\d t\leq\int_{0}^{T}\int_{\Omega}2|b_{2}|\log(C\eps|b_{2}|+1)\d x\d t.
\]
By assumption, we have that $\div\hat{J},|\hat{J}|\in\L^{\mathsf{C}}([0,T]\times\Omega)$.
Since $V\in\mathrm{C}^{1}(\overline{\Omega})$ and the Orlicz space
$\L^{\mathsf{C}}([0,T]\times\Omega)$ is a Banach space, we conclude
that $b_{2}\in\L^{\mathsf{C}}([0,T]\times\Omega)$. By the inequality
for $\mathsf{C}$, this implies that for $\eps<\frac{1}{C}$, the
right-hand side is bounded. We show that (for a subsequence) the integrand
converges to zero pointwise a.e. in $[0,T]\times\Omega$. By the dominated
convergence theorem, this would imply that $\int_{0}^{T}\mathsf{\int_{\Omega}}\widetilde{\mathsf{C}}\left(\frac{\sqrt{c_{1}c_{2}}}{\eps},b_{2}\right)\d x\d t\rightarrow0$
as $\eps\to0$.

To see that the integrand converges to zero pointwise, we firstly
observe that $b_{2}\in\L^{\mathsf{C}}([0,T]\times\Omega)\subset\L^{1}([0,T]\times\Omega)$,
which means that
\[
\int_{0}^{T}\int_{\Omega}\log(\eps|b_{2}|+1)\d x\d t\leq\eps\int_{0}^{T}\int_{\Omega}|b_{2}|\d x\d t\rightarrow0.
\]
Hence, (for a subsequence) $\log(\eps|b_{2}|+1)$ converges pointwise
to zero and, thus, also $|b_{2}|\log(C\eps|b_{2}|+1)$.
\end{proof}
In fact, the above proof is quite robust and already suggests that
the same convergence holds even if $\hat{c}^{\eps}\to0$ not to fast
somewhere in $\Omega\times[0,T]$ and $\|b_{\eps}^{2}\|\approx\eps^{-\alpha}$
for some $\alpha>0$. This is proved in Proposition \ref{prop:ConvOfCTermWithSmoothing}.

In the following, we need to overcome the positivity assumption $\hat{c}\geq\frac{1}{C}$
and the regularity assumption for $\hat{J}$ and $\div\hat{J}$. The
first is done by shifting the density $\hat{c}^{\delta}:=\frac{1}{Z_{\delta}}(\hat{c}+\delta)$,
$\delta>0$; the necessary regularity of $\hat{J}$ is provided immediately
by the bound on the dissipation functional; the regularity of $\div\hat{J}$
is achieved by smoothing using that $(\hat{c},\hat{J})$ is a solution
of the coarse-grained continuity equation $\dot{\hat{c}}+\div\hat{J}=0$.

\subsubsection{Auxiliary results for construction of recovery sequence for general
measures\label{subsec:Auxiliary-results-for-Recovery-Sequence}}

First, we show how to overcome the positivity assumption. This is
done by a controlled positive shift.
\begin{lem}
\label{lem:RecoverySequencePositivity}For all $\mu^{0}=c^{0}\,\d x$
satisfying $\fD_{0}^{V}(\mu^{0})=\hat{\fD}(\hat{\mu}^{0})<\infty$
there is a sequence $(\hat{c}^{\gamma})$ of densities satisfying
$\hat{c}^{\gamma}\geq\gamma$, $\hat{c}^{\gamma}\to\hat{c}$ in $\L^{1}([0,T]\times\Omega)$
such that for their corresponding measures we have $\hat{\fD}(\hat{\mu}^{\gamma})=\fD_{0}(\mu^{\gamma})\rightarrow\fD_{0}(\mu^{0})=\hat{\fD}(\mu^{0})$
as $\gamma\to0$ and $\sup_{\gamma\in]0,1]}\mathrm{ess\,sup}_{t\in[0,T]}\cE(\mu^{\gamma}(t))<\infty$.
\end{lem}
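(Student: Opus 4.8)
The plan is to use the normalized positive shift indicated in the surrounding text. I set $\hat c^{\gamma}:=\frac{1}{1+\gamma}(\hat c+\gamma)$, which is admissible: for each $t$ the mass is $\int_{\Omega}(\hat c+\gamma)\,\d x=1+\gamma$ (using $|\Omega|=1$), so $\hat\mu^{\gamma}:=\hat c^{\gamma}\,\d x\in\hat Q=\Prob(\Omega)$ and $\hat c^{\gamma}\ge\frac{\gamma}{1+\gamma}$, which after an inconsequential relabelling of $\gamma$ gives the stated lower bound. I reconstruct $\mu^{\gamma}$ on the slow manifold via $c_i^{\gamma}=\frac{w_i^V}{\hat w^V}\hat c^{\gamma}$, so that $\rho_1^{V,\gamma}=\rho_2^{V,\gamma}$ and Proposition \ref{prop:DissipationFunctionalInCoarseGrainedVariables} gives $\fD_0^V(\mu^{\gamma})=\hat\fD(\hat\mu^{\gamma})$. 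The key structural observation is that $\hat c^{\gamma}=\frac{1}{1+\gamma}\hat c+\frac{\gamma}{1+\gamma}\mathbf 1$ is a convex combination of $\hat c$ and the constant density $\mathbf 1$. The $\L^1$-convergence $\hat c^{\gamma}\to\hat c$ is immediate from $\hat c^{\gamma}-\hat c=\frac{\gamma}{1+\gamma}(\mathbf 1-\hat c)$, and the uniform energy bound follows from convexity of $\hat\cE$, which yields $\hat\cE(\hat\mu^{\gamma}(t))\le\frac{1}{1+\gamma}\hat\cE(\hat\mu^0(t))+\frac{\gamma}{1+\gamma}\hat\cE(\mathbf 1\,\d x)$; both terms are finite and $\gamma$-uniformly bounded since $\hat V\in\mathrm C^1(\overline\Omega)$ and $\mathrm{ess\,sup}_t\cE(\mu^0(t))<\infty$ (passing between $\cE$ and $\cE^V=\hat\cE$ costs only the bounded term $\int V\,\d\mu$).

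It remains to prove $\hat\fD(\hat\mu^{\gamma})\to\hat\fD(\hat\mu^0)$. For the upper bound I fix a flux $\hat J$ attaining the infimum in $\hat\fD(\hat\mu^0)$ (which exists by convexity and lower semicontinuity); by finiteness of $\hat\fD(\hat\mu^0)$ it is absolutely continuous with $\int\!\!\int\tfrac12\frac{|\hat J|^2}{\hat\delta^V\hat c}<\infty$ and satisfies $\dot{\hat c}+\div\hat J=0$. Since the constant shift has zero time-derivative, the scaled flux $\hat J^{\gamma}:=\frac{1}{1+\gamma}\hat J$ solves $\dot{\hat c}^{\gamma}+\div\hat J^{\gamma}=0$, hence is admissible for $\hat\mu^{\gamma}$, and
\[
\hat\fD(\hat\mu^{\gamma})\le\int_0^T\!\!\int_\Omega\tfrac12\frac{|\hat J|^2}{(1+\gamma)\,\hat\delta^V(\hat c+\gamma)}\,\d x\,\d t+\int_0^T\!\!\int_\Omega\tfrac12\,\hat\delta^V\,\hat c^{\gamma}\,\big|\nabla\log\hat c^{\gamma}+\nabla\hat V\big|^2\,\d x\,\d t.
\]
The velocity integrand increases monotonically to $\tfrac12\frac{|\hat J|^2}{\hat\delta^V\hat c}$ as $\gamma\downarrow0$ (the denominator $(1+\gamma)(\hat c+\gamma)$ is increasing in $\gamma$), so monotone convergence gives the velocity part of $\hat\fD(\hat\mu^0)$.

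For the Fisher part I use $\nabla\log\hat c^{\gamma}=\frac{\nabla\hat c}{\hat c+\gamma}$ and split $\hat\delta^V\hat c^{\gamma}|\nabla\log\hat c^{\gamma}+\nabla\hat V|^2$ into $\frac{1}{1+\gamma}$ times the three terms $\frac{\hat\delta^V|\nabla\hat c|^2}{\hat c+\gamma}$, $\,2\hat\delta^V\nabla\hat c\cdot\nabla\hat V$ and $\hat\delta^V(\hat c+\gamma)|\nabla\hat V|^2$. The first increases monotonically to $\frac{\hat\delta^V|\nabla\hat c|^2}{\hat c}$, which is integrable because $\frac{|\nabla\hat c|^2}{\hat c}\le2\hat c|\nabla\log\hat c+\nabla\hat V|^2+2\hat c|\nabla\hat V|^2$ is integrable by finiteness of the Fisher information and boundedness of $\nabla\hat V$; thus monotone convergence applies. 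The remaining two terms converge by dominated convergence, dominated by $2\hat\delta^V|\nabla\hat c|\,|\nabla\hat V|$ and $\hat\delta^V(\hat c+1)|\nabla\hat V|^2$ respectively, both integrable since $\nabla\hat c\in\L^1$ (Lemma \ref{lem:RegularityFluxesAndSpaceAndLiminf}), $\nabla\hat V$ is bounded and $\hat\delta^V$ lies between $\delta_1$ and $\delta_2$. Hence $\limsup_{\gamma\to0}\hat\fD(\hat\mu^{\gamma})\le\hat\fD(\hat\mu^0)$.

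For the matching lower bound I invoke lower semicontinuity of the convex inf-functional $\hat\fD$ along the strong convergence $\hat\mu^{\gamma}\to\hat\mu^0$: taking near-optimal fluxes for $\hat\mu^{\gamma}$, extracting a weak-$*$ limit, and applying the liminf machinery of Lemma \ref{lem:LiminfEstimateAGS} to $\widetilde{\mathsf Q}$ and the Fisher integrand yields $\liminf_{\gamma\to0}\hat\fD(\hat\mu^{\gamma})\ge\hat\fD(\hat\mu^0)$. Combining the two bounds gives the desired convergence. I expect the main difficulty to lie in the Fisher-information term: unlike the purely quadratic velocity term it does not converge by a single monotonicity argument, because of the drift $\nabla\hat V$ and the space-dependent weight $\hat w^V$, so one must split it and combine monotone with dominated convergence — which in turn hinges on the a priori integrability of the singular term $|\nabla\hat c|^2/\hat c$ extracted from $\hat\fD(\hat\mu^0)<\infty$.
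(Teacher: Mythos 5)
Your proposal is correct and follows essentially the same route as the paper: a normalized positive shift $\hat c^\gamma=\tfrac1{Z_\gamma}(\hat c+\mathrm{const}\cdot\gamma)$ with the correspondingly rescaled flux $\hat J^\gamma=\tfrac1{Z_\gamma}\hat J$, monotone convergence for the velocity term, a careful estimate of the Fisher term, and lower semicontinuity for the matching liminf. The only (harmless) deviations are that you control the Fisher information by splitting it into three terms treated by monotone resp.\ dominated convergence, where the paper instead derives the single pointwise bound $\frac{|\nabla\hat\rho^{V,\gamma}|^2}{\hat\rho^{V,\gamma}}\le(1+\sqrt\gamma)\frac{|\nabla\hat\rho^{V}|^2}{\hat\rho^{V}}+O(\sqrt\gamma)$, and that you make the uniform energy bound explicit via convexity of $\hat\cE$, which the paper merely asserts.
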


\begin{proof}
For small $\gamma>0$, we define $\hat{c}^{\gamma}:=\frac{1}{Z_{\gamma}}(\hat{c}+2\gamma)$
, where $Z_{\gamma}=1+2\gamma|\Omega|=1+2\gamma>0$ is the normalization
factor such that $\int_{\Omega}\hat{c}^{\gamma}\d x=1$. Hence, $Z_{\gamma}\searrow1$,
$\hat{c}^{\gamma}\rightarrow\hat{c}$, $\sup_{\gamma\in]0,1]}\mathrm{ess\,sup}_{t\in[0,T]}\cE(\mu^{\gamma}(t))<\infty$
and w.l.o.g. we assume that $\hat{c}^{\gamma}\geq\gamma$. Moreover,
we define $\hat{J}^{\gamma}:=\frac{1}{Z_{\gamma}}\hat{J}$. Clearly,
$\hat{J}^{\gamma}\cdot\nu=0$ on $\partial\Omega$ and $(\hat{c}^{\gamma},\hat{J}^{\gamma})$
solves the continuity equation $\dot{\hat{c}}^{\gamma}+\div\hat{J}^{\gamma}=0$.
We compute the terms in the dissipation functional $\fD_{0}^{V}(\mu^{\gamma}).$
We have $\hat{\rho}^{V,\gamma}=\frac{\hat{c}^{\gamma}}{w^{V}}=\frac{1}{Z_{\gamma}}\frac{\hat{c}+2\gamma}{w^{V}}$.
Using the bounds $\sup_{x\in\overline{\Omega}}\left\{ \nabla\left(1/w^{V}\right),w^{V}\right\} \leq C$,
we get
\begin{align*}
 & \nabla\hat{\rho}^{V,\gamma}=\frac{1}{Z_{\gamma}}\left\{ \nabla\left(\frac{\hat{c}}{w^{V}}\right)+2\gamma\nabla\left(\frac{1}{w^{V}}\right)\right\} \ \ \Rightarrow\ \ \left|\nabla\hat{\rho}^{V,\gamma}\right|\leq\frac{1}{Z_{\gamma}}\left\{ \left|\nabla\hat{\rho}^{V}\right|+2\gamma C\right\} 
\end{align*}
Using the estimates $\frac{1}{a+\delta}\leq\frac{1}{a}$, $\frac{1}{Z_{\gamma}}\leq1$
and the inequality $2xy\leq\sqrt{\gamma}x^{2}+\frac{1}{\sqrt{\gamma}}y^{2}$
in the second estimate, we get the pointwise estimate
\begin{align*}
\frac{|\nabla\hat{\rho}^{V,\gamma}|^{2}}{\hat{\rho}^{V,\gamma}} & =\frac{1}{Z_{\gamma}}\frac{\left(\left|\nabla\hat{\rho}^{V}\right|+2\gamma C\right)^{2}}{\hat{\rho}^{V}+\frac{2\gamma}{w^{V}}}=\frac{1}{Z_{\gamma}}\frac{\left|\nabla\hat{\rho}^{V}\right|^{2}+4\gamma C\left|\nabla\hat{\rho}^{V}\right|+4\gamma^{2}C^{2}}{\hat{\rho}^{V}+\frac{2\gamma}{w^{V}}}\\
 & \leq\frac{\left|\nabla\hat{\rho}^{V}\right|^{2}}{\hat{\rho}^{V}}+2\frac{2\gamma C\left|\nabla\hat{\rho}^{V}\right|}{\hat{\rho}^{V}+\frac{2\gamma}{w^{V}}}+\frac{4\gamma^{2}C^{2}}{\hat{\rho}^{V}+\frac{2\gamma}{w^{V}}}\\
 & \leq\frac{\left|\nabla\hat{\rho}^{V}\right|^{2}}{\hat{\rho}^{V}}+\frac{1}{\hat{\rho}^{V}+\frac{2\gamma}{w^{V}}}\left(\sqrt{\gamma}\left|\nabla\hat{\rho}^{V}\right|^{2}+\tfrac{1}{\sqrt{\gamma}}\left\{ 2\gamma C\right\} ^{2}\right)+2\gamma C^{2}w^{V}\\
 & \leq\frac{\left|\nabla\hat{\rho}^{V}\right|^{2}}{\hat{\rho}^{V}}(1+\sqrt{\gamma})+2\sqrt{\gamma}w^{V}C^{2}(1+\sqrt{\gamma}).
\end{align*}
Hence, $\int_{0}^{T}\int_{\Omega}\delta^{V}w^{V}\frac{|\nabla\hat{\rho}^{V,\gamma}|^{2}}{\hat{\rho}^{V,\gamma}}\d x\d t\rightarrow\int_{0}^{T}\int_{\Omega}\delta^{V}w^{V}\frac{|\nabla\hat{\rho}^{V}|^{2}}{\hat{\rho}^{V}}\d x\d t$
as $\gamma\to0$.

Similarly, we get 
\[
\int_{0}^{T}\int_{\Omega}\frac{|\hat{J}^{\gamma}|^{2}}{\hat{\delta}\hat{c}^{\gamma}}\d x\d t=\frac{1}{Z_{\gamma}}\int_{0}^{T}\int_{\Omega}\frac{|\hat{J}|^{2}}{\hat{\delta}\left(\hat{c}+2\gamma\right)}\d x\d t\leq\int_{0}^{T}\int_{\Omega}\frac{|\hat{J}|^{2}}{\hat{\delta}\hat{c}}\d x\d t,
\]
which implies that $\int_{0}^{T}\!\!\int_{\Omega}\widetilde{\mathsf{Q}}(\hat{\delta}\hat{c}^{\gamma},\hat{J}^{\gamma})\d x\d t\leq\int_{0}^{T}\!\!\int_{\Omega}\widetilde{\mathsf{Q}}(\hat{\delta}\hat{c},\hat{J})\d x\d t$.
Hence, we conclude that $\hat{c}^{\gamma}\geq\gamma$, $\hat{c}^{\gamma}\to\hat{c}$
and $\hat{\fD}(\hat{c}^{\gamma})\rightarrow\hat{\fD}(\hat{c})$ as
$\gamma\to0$.
\end{proof}
Next, we are going to show that the flux $b_{1}=-b_{2}$ can be made
sufficiently smooth, i.e. at least in $\L^{\mathsf{C}}$ which would
allow us to proceed similar as in Lemma \ref{lem:BinLCimpliesCepstozero}.

Recalling the formula for the reconstructed flux, we have 
\[
b_{2}=\left(\frac{\delta_{1}-\delta_{2}}{\delta_{1}w_{1}^{V}+\delta_{2}w_{2}^{V}}\frac{w_{1}^{V}w_{2}^{V}}{w_{1}^{V}+w_{2}^{V}}\right)\div\hat{J}+\hat{J}\cdot\nabla\left(\frac{\delta_{1}w_{1}^{V}}{\delta_{1}w_{1}^{V}+\delta_{2}w_{2}^{V}}\right)=:a_{1}\div\hat{J}+\hat{J}\cdot a_{2}
\]
and $b_{2}=-b_{1}$, where $a_{1}\in\mathrm{C}^{1}(\overline{\Omega},\R)$,
$a_{2}\in\mathrm{C}^{0}(\overline{\Omega},\R^{d})$ such that $\sup_{x\in\overline{\Omega}}\left\{ |a_{1}(x)|,|a_{2}(x)|\right\} \leq C$.
In particular, the regularity of $b_{1}=-b_{2}$ does not depend on
$a_{1},a_{2}$. We are going to prove that $\div\hat{J}$ and $\hat{J}$
have enough regularity. The regularity of $\hat{J}$ follows from
the bound on the dissipation functional. The regularity of $\div\hat{J}$
is achieved by mollification.

First, we show that $\hat{J}\in\L^{\tilde{p}}([0,T]\times\Omega),$
for some $\tilde{p}>1$. Clearly, we have $\hat{J}\in\L^{1}([0,T]\times\Omega)$
by Lemma \ref{lem:L1BoundBySuperlinerarGrowth} and the bound on the
dissipation functional. To improve the regularity of $\hat{J}$, we
firstly improve the regularity of $\hat{c}\in\L^{p}([0,T]\times\Omega)$
which is provided from the bound on the dissipation functional $\fD_{0}$
and the energy functional $\cE$. The bound on the energy yields $\hat{c}\in\L^{\infty}([0,T],\L^{1}(\Omega))$.
The bound on the Fisher-information yields again by Lemma \ref{lem:L1BoundBySuperlinerarGrowth}
that $\hat{c}\in\L^{1}([0,T],\W^{1,1}(\Omega))$ as in Lemma \ref{lem:RegularityFluxesAndSpaceAndLiminf}.
By the Sobolev embedding theorem, we have the compact embedding $\W^{1,1}(\Omega)\subset\L^{q}(\Omega)$,
where $1-\tfrac{1}{d}>\tfrac{1}{q}\Leftrightarrow q<\tfrac{d}{d-1}$.
Thus $\hat{c}\in\L^{\infty}([0,T],\L^{1}(\Omega))\cap\L^{1}([0,T],\L^{\tfrac{d}{d-1}}(\Omega))$.
Using the next classical interpolation result we get $\hat{c}\in\L^{p}([0,T]\times\Omega)$
for some $p>1$.
\begin{thm}[Theorem 5.1.2, \cite{BerLoe76ISAI}]
 \label{thm:InterpolationTheorem}Let $X_{1},X_{2}$ be Banach spaces.
Then for the complex interpolation spaces, it holds for any $\theta\in]0,1[$
\[
[\L^{p_{1}}([0,T],X_{1}),\L^{p_{2}}([0,T],X_{2})]_{\theta}\simeq\L^{p_{\theta}}([0,T],[X,Y]_{\theta}),
\]
where $\tfrac{1}{p_{\theta}}=\tfrac{1-\theta}{p_{1}}+\tfrac{\theta}{p_{2}}$.
\end{thm}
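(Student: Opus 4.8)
The plan is to prove the asserted identity as two continuous inclusions, each with norm $\le 1+\eps$, so that letting $\eps\to0$ identifies the two spaces. Write $A_1=\L^{p_1}([0,T],X_1)$ and $A_2=\L^{p_2}([0,T],X_2)$ for the interpolation couple and recall Calderón's complex method: $[A_1,A_2]_\theta$ consists of the values $F(\theta)$ of functions $F$ on the closed strip $\overline{S}=\{z:0\le\Re z\le1\}$ that are bounded and continuous into $A_1+A_2$, analytic in the interior, with $s\mapsto F(\mathrm is)$ and $s\mapsto F(1+\mathrm is)$ bounded continuous into $A_1$ and $A_2$, equipped with $\|f\|_{[A_1,A_2]_\theta}=\inf_F\max_j\sup_s\|F(\,\cdot\,)\|_{A_j}$ over representing $F$. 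Throughout I assume $p_1,p_2<\infty$ (the range actually needed in the paper), so that simple functions are dense in $\L^{p_\theta}([0,T],[X_1,X_2]_\theta)$.

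For the inclusion $[A_1,A_2]_\theta\hookrightarrow\L^{p_\theta}([0,T],[X_1,X_2]_\theta)$ I would start from Calderón's pointwise estimate on the strip. For $f=F(\theta)$ and a.e.\ $t$ the trace $z\mapsto F(z)(t)$ is admissible for the couple $(X_1,X_2)$, and the Poisson representation of $\log\|F(\theta)(t)\|$ yields
\begin{align*}
\|f(t)\|_{[X_1,X_2]_\theta}&\le I_1(t)^{1-\theta}I_2(t)^{\theta},\\
I_1(t)&=\int_{\R}\|F(\mathrm is)(t)\|_{X_1}\,P_0(\theta,s)\,\d s,\quad I_2(t)=\int_{\R}\|F(1+\mathrm is)(t)\|_{X_2}\,P_1(\theta,s)\,\d s,
\end{align*}
where $P_0,P_1$ are the Poisson kernels of the strip for the lines $\Re z=0$ and $\Re z=1$. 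Raising to the power $p_\theta$, integrating in $t$, and applying Hölder with the exponents $\tfrac{p_1}{(1-\theta)p_\theta}$ and $\tfrac{p_2}{\theta p_\theta}$ (conjugate precisely because $\tfrac1{p_\theta}=\tfrac{1-\theta}{p_1}+\tfrac{\theta}{p_2}$) reduces everything to bounding $\int_0^T I_j(t)^{p_j}\d t$; Minkowski's integral inequality then pulls the $t$-integral inside the $s$-average against $P_j$ and gives $\int_0^T I_j^{p_j}\d t\le(\sup_s\|F\|_{A_j})^{p_j}$ up to the (normalisable) mass of $P_j$. Optimising over $F$ and using the geometric-mean bound $M_1^{1-\theta}M_2^{\theta}\le\|f\|_{[A_1,A_2]_\theta}$ (valid after the harmless substitution $F\rightsquigarrow\e^{\kappa(z-\theta)}F$) closes this direction.

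The reverse inclusion is the heart of the matter and is proved by explicitly constructing a representing analytic family. By density it suffices to treat a simple function $f(t)=\sum_k x_k\chi_{E_k}(t)$ with $x_k\in[X_1,X_2]_\theta$ and $E_k$ disjoint measurable. For each $k$ I choose $g_k\in\mathcal{F}(X_1,X_2)$ with $g_k(\theta)=x_k$ and $\max_j\sup_s\|g_k\|_{X_j}\le(1+\eps)\|x_k\|_{[X_1,X_2]_\theta}$, put $\phi(z)=\tfrac{1-z}{p_1}+\tfrac{z}{p_2}$ (so $\phi(\theta)=\tfrac1{p_\theta}$), and define
\[
F(z)(t)=\sum_k\|x_k\|_{[X_1,X_2]_\theta}^{\,p_\theta\phi(z)-1}\,g_k(z)\,\chi_{E_k}(t).
\]
Since the sum is finite and each scalar weight is entire and bounded on $\overline{S}$, one has $F\in\mathcal{F}(A_1,A_2)$; at $z=\theta$ every weight equals $\|x_k\|^{0}=1$, so $F(\theta)=f$. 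On the edge $\Re z=0$ the weight has modulus $\|x_k\|^{p_\theta/p_1-1}$, and a direct computation using the disjointness of the $E_k$ gives $\|F(\mathrm is)\|_{A_1}^{p_1}\le(1+\eps)^{p_1}\sum_k|E_k|\,\|x_k\|^{p_\theta}=(1+\eps)^{p_1}\|f\|_{\L^{p_\theta}([0,T],[X_1,X_2]_\theta)}^{p_\theta}$, i.e.\ $\|F(\mathrm is)\|_{A_1}\le(1+\eps)\|f\|^{p_\theta/p_1}$, and symmetrically $\|F(1+\mathrm is)\|_{A_2}\le(1+\eps)\|f\|^{p_\theta/p_2}$. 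The geometric-mean bound then yields $\|f\|_{[A_1,A_2]_\theta}\le(1+\eps)\|f\|^{(1-\theta)p_\theta/p_1+\theta p_\theta/p_2}=(1+\eps)\|f\|_{\L^{p_\theta}([0,T],[X_1,X_2]_\theta)}$.

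The main obstacle is the passage from simple functions to general $f$: I would take simple $f_n\to f$ in $\L^{p_\theta}([0,T],[X_1,X_2]_\theta)$, apply the construction above to the telescoping differences $f_n-f_{n-1}$, and sum the resulting families, using completeness of $[A_1,A_2]_\theta$ together with the norm bounds just derived to obtain a convergent series representing $f$. A secondary technical point, already needed in the forward inclusion, is the rigorous justification of the pointwise trace $z\mapsto F(z)(t)$ and its admissibility for a.e.\ $t$, which follows from a Fubini argument exploiting that the edge functions lie in $\L^{p_j}([0,T],X_j)$. The endpoints $p_j=\infty$ are deliberately excluded, since both the density of simple functions and the $\L^{p}$-weight construction fail there; they are not needed for the conclusion $\hat c\in\L^{p}([0,T]\times\Omega)$ used in the paper.
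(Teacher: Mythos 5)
The paper offers no proof of this statement at all: it is imported verbatim as Theorem 5.1.2 of Bergh--L\"ofstr\"om with a citation, so there is nothing internal to compare your argument against. What you have written is, in substance, the standard Calder\'on/Bergh--L\"ofstr\"om proof of the identification $[\L^{p_1}(X_1),\L^{p_2}(X_2)]_\theta\simeq\L^{p_\theta}([X_1,X_2]_\theta)$, and it is correct in all essentials: the forward inclusion via the Poisson-kernel estimate on the strip, H\"older with the conjugate exponents $p_1/((1-\theta)p_\theta)$ and $p_2/(\theta p_\theta)$, and Minkowski's integral inequality; the reverse inclusion via the explicit analytic family with scalar weights $\|x_k\|^{p_\theta\phi(z)-1}$, whose edge moduli $\|x_k\|^{p_\theta/p_j-1}$ make the $\L^{p_j}$-norms on the two boundary lines collapse to $\|f\|_{\L^{p_\theta}}^{p_\theta/p_j}$; and the density/telescoping passage to general $f$. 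The technical points you flag (a.e.\ admissibility of the trace $z\mapsto F(z)(t)$, the $\e^{\kappa(z-\theta)}$ normalization giving the geometric-mean bound) are exactly the ones that need care and are handled correctly in the reference. One caveat worth making explicit: you restrict to $p_1,p_2<\infty$, but the paper actually invokes the theorem in Lemma \ref{lem:IntegrabilityConcentrationc} with $p_{1}=1$, $p_{2}=\infty$, which your proof does not cover. You are right that this does no harm to the paper's conclusion, since only the inclusion $\L^{\infty}([0,T],\L^{1})\cap\L^{1}([0,T],\L^{q})\subset\L^{p_\theta}([0,T],\L^{q_\theta})$ is used there, and that follows from two applications of H\"older's inequality without any interpolation theory; it would strengthen your write-up to state that elementary bypass explicitly rather than only asserting that the endpoint is ``not needed''.
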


In our particular situation, we have the following.
\begin{lem}
\label{lem:IntegrabilityConcentrationc}Let $\mu^{0}\in\L_{w}^{\infty}([0,T],Q)$
such that $\fD_{0}^{V}(\mu^{0})<\infty$ and $\mathrm{ess\,sup}_{t\in[0,T]}\cE(\mu^{0}(t))<\infty$.
Then the density $\hat{c}^{0}$ is in $\L^{p}([0,T]\times\Omega)$
with $p=\frac{d+1}{d}>1$.
\end{lem}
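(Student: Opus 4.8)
The plan is to establish higher integrability of $\hat c^{0}$ by interpolating two complementary a priori bounds that follow directly from the hypotheses: an $\L^{\infty}$-in-time, $\L^{1}$-in-space bound coming from the probability-measure (mass) constraint, and an $\L^{1}$-in-time, $\W^{1,1}$-in-space bound coming from the diffusive Fisher information contained in $\fD_{0}^{V}$. After upgrading the spatial $\W^{1,1}$-regularity to $\L^{d/(d-1)}$ via Sobolev embedding, a complex interpolation with carefully matched time and space exponents will produce a single space-time exponent, which turns out to be exactly $p=\tfrac{d+1}{d}$.

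First I would record the two bounds. Since $\mu^{0}(t)\in Q=\mathrm{Prob}(\Omega\times\{1,2\})$, mass conservation gives $\|\hat c^{0}(t)\|_{\L^{1}(\Omega)}=\hat\mu^{0}(t)(\Omega)=1$ for a.e.\ $t\in[0,T]$, so that $\hat c^{0}\in\L^{\infty}([0,T],\L^{1}(\Omega))$ (the bound on $\cE$ would equally suffice). For the second bound, the finiteness $\fD_{0}^{V}(\mu^{0})<\infty$ controls the diffusive Fisher information $\int_{0}^{T}\!\int_{\Omega}\sum_{j}\delta_{j}w_{j}^{V}\tfrac{|\nabla\rho_{j}^{V}|^{2}}{\rho_{j}^{V}}\d x\d t$. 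Applying Lemma \ref{lem:L1BoundBySuperlinerarGrowth} with $\mathsf{F}(r)=\tfrac12|r|^{2}$, $\rho=\rho_{j}^{V}$ and $W=\nabla\rho_{j}^{V}$, and using that $\int_{\Omega}\rho_{j}^{V}\d x$ stays bounded, yields $\rho_{j}^{V}\in\L^{1}([0,T],\W^{1,1}(\Omega))$, exactly as in Lemma \ref{lem:RegularityFluxesAndSpaceAndLiminf}. Since $w^{V}\in\mathrm{C}^{1}(\Omega)$ is bounded above and bounded away from zero on the compact set $\overline{\Omega}$, multiplication by $w_{j}^{V}$ preserves $\W^{1,1}$, so $c_{j}^{0}=\rho_{j}^{V}w_{j}^{V}$ and hence $\hat c^{0}=c_{1}^{0}+c_{2}^{0}$ lie in $\L^{1}([0,T],\W^{1,1}(\Omega))$.

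Next I would invoke the continuous Sobolev embedding $\W^{1,1}(\Omega)\hookrightarrow\L^{d/(d-1)}(\Omega)$ (the borderline Gagliardo--Nirenberg--Sobolev inequality on a bounded Lipschitz domain), which gives $\hat c^{0}\in\L^{1}([0,T],\L^{d/(d-1)}(\Omega))$. Then I would apply Theorem \ref{thm:InterpolationTheorem} with $X_{1}=\L^{1}(\Omega)$, $p_{1}=\infty$ and $X_{2}=\L^{d/(d-1)}(\Omega)$, $p_{2}=1$. The spatial complex interpolation gives $[\L^{1},\L^{d/(d-1)}]_{\theta}=\L^{r_{\theta}}(\Omega)$ with $\tfrac1{r_{\theta}}=(1-\theta)+\theta\tfrac{d-1}{d}$, while the time exponent obeys $\tfrac1{p_{\theta}}=\theta$. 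To land in a genuine space-time Lebesgue space $\L^{p}([0,T]\times\Omega)=\L^{p}([0,T],\L^{p}(\Omega))$ I need $r_{\theta}=p_{\theta}$; solving $\theta=(1-\theta)+\theta\tfrac{d-1}{d}$ gives $\theta=\tfrac{d}{d+1}$ and hence $p=\tfrac1\theta=\tfrac{d+1}{d}>1$, as claimed.

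The only genuinely delicate point is the interpolation bookkeeping: one must choose $\theta$ so that the interpolated time exponent coincides with the interpolated space exponent, since otherwise the conclusion would be a mixed Bochner norm rather than the sought $\L^{p}([0,T]\times\Omega)$. Everything else---mass conservation, the $\W^{1,1}$-bound from the Fisher information, and the Sobolev step---is routine. A minor subtlety is that the Sobolev embedding is used at its borderline exponent $d/(d-1)$, where only the continuous (not compact) embedding is available; but continuity is all that is needed here.
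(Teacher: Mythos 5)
Your argument is correct and follows essentially the same route as the paper: the $\L^{\infty}([0,T],\L^{1}(\Omega))$ bound from the mass/energy constraint, the $\L^{1}([0,T],\W^{1,1}(\Omega))$ bound from the Fisher information via Lemma \ref{lem:L1BoundBySuperlinerarGrowth}, the borderline Sobolev embedding into $\L^{d/(d-1)}(\Omega)$, and the complex interpolation of Theorem \ref{thm:InterpolationTheorem} with the time and space exponents matched to give $p=\tfrac{d+1}{d}$ (your labeling of the endpoints swaps $\theta$ and $1-\theta$ relative to the paper, which is immaterial). Your remark that only the continuous, not compact, embedding is needed at the borderline exponent is a correct and slightly more careful observation than the paper's phrasing.
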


\begin{proof}
To apply Theorem \ref{thm:InterpolationTheorem}, we first observe
that the Lebesgue spaces form interpolation couples. In our situation
we have $p_{1}=1,p_{2}=\infty$, $X_{1}=\L^{\tfrac{d}{d-1}}(\Omega),X_{2}=\L^{1}(\Omega)$.
Hence, $p_{\theta}=\tfrac{1}{1-\theta}>1$. Moreover, $[X,Y]_{\theta}=[\L^{q}(\Omega),\L^{1}(\Omega)]_{\theta}\simeq\L^{q_{\theta}}(\Omega)$,
where $\tfrac{1}{q_{\theta}}=\tfrac{1-\theta}{q}+\tfrac{\theta}{1}$.
Setting $p_{\theta}=q_{\theta}$, we conclude $\tfrac{1-\theta}{1-2\theta}=q=\tfrac{d}{d-1}$.
Solving $p_{\theta}=q_{\theta}$ for $\theta$, we obtain $\theta=\tfrac{1}{1+d}$,
and hence $p_{\theta}=q_{\theta}=\tfrac{d+1}{d}>1$. Summarizing,
we conclude 
\begin{equation}
\hat{c}\in\,\L^{\infty}([0,T],\L^{1}(\Omega))\ \cap\ \L^{1}([0,T],\W^{1,1}(\Omega))\ \subset\ \L^{\tfrac{d+1}{d}}([0,T]\times\Omega).\label{eq:RegForC}
\end{equation}
\end{proof}
\begin{rem}
In particular, if $d=2$, then $\hat{c}\in\L^{3/2}([0,T]\times\Omega)$
and if $d=3$, then $\hat{c}\in\L^{4/3}([0,T]\times\Omega)$. Iterating
the procedure, it is even possible to obtain $\hat{c}\in\L^{(d+2)/d}(([0,T]\times\Omega))$
for $d\geq2.$
\end{rem}

Knowing integrability of $\hat{c}$, we get also better integrability
of the fluxes $\hat{J}\in\L^{\tilde{p}}([0,T]\times\Omega)$ for some
$\tilde{p}>1$, which follows from the next lemma.
\begin{lem}
\label{lem:IntegrabilityFluxJ}$\ $
\begin{enumerate}
\item We have for all $J\in\R^{d}$ and $c>0$ that 
\[
\frac{|J|^{2}}{c}+\frac{1}{p}c^{p}\geq\left(1+\frac{1}{p}\right)|J|^{\frac{2p}{p+1}}.
\]
\item Let $\mu^{0}\in\L_{w}^{\infty}([0,T]\times\Omega,\R_{\geq0}^{2})$
such that $\fD_{0}^{V}(\mu^{0})<\infty$ and $\mathrm{ess\,sup}_{t\in[0,T]}\cE(\mu^{0}(t))<\infty$
and let $\hat{J}\in\mathcal{M}([0,T]\times\Omega,\R^{d})$ be the
corresponding diffusion flux satisfying the continuity equation. Then
$\hat{J}\in\L^{\tilde{p}}([0,T]\times\Omega,\R^{d})$ for $\tilde{p}=\frac{2d+2}{2d+1}>1$.
\end{enumerate}
\end{lem}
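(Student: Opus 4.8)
The plan is to establish Part~1 as a pointwise inequality by elementary optimization and then feed it into Part~2 together with the integrability already obtained for $\hat c$ and for $|\hat J|^2/\hat c$.

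For Part~1 (the case $J=0$ being trivial) I would fix $|J|>0$ and minimize $f(c)=|J|^2/c+\tfrac1p c^p$ over $c>0$. The stationarity condition $f'(c)=-|J|^2/c^2+c^{p-1}=0$ gives the unique critical point $c_*=|J|^{2/(p+1)}$, and since $f$ is strictly convex on $]0,\infty[$ this is the global minimizer. Substituting $c_*$ and using $2-\tfrac{2}{p+1}=\tfrac{2p}{p+1}$ one computes $f(c_*)=\big(1+\tfrac1p\big)|J|^{2p/(p+1)}$, which is exactly the asserted lower bound; equivalently this is a weighted Young inequality.

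For Part~2 I would collect two ingredients. First, the hypothesis $\fD_0^V(\mu^0)<\infty$ together with the flux-equilibration estimate \eqref{eq:EstimateCoarseGrainedFluxes} yields $\int_0^T\!\!\int_\Omega |\hat J|^2/(\hat\delta^V\hat c)\,\d x\,\d t<\infty$; since $\hat\delta^V$ is a convex combination of $\delta_1,\delta_2$ and hence bounded above by $\max\{\delta_1,\delta_2\}$, this gives $|\hat J|^2/\hat c\in\L^1([0,T]\times\Omega)$. Second, Lemma~\ref{lem:IntegrabilityConcentrationc} provides $\hat c\in\L^p([0,T]\times\Omega)$ with $p=\tfrac{d+1}{d}$. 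Applying Part~1 pointwise with $J=\hat J(t,x)$ and $c=\hat c(t,x)$ (at points where $\hat c=0$ the finiteness of $|\hat J|^2/\hat c$ forces $\hat J=0$, so the inequality is trivial) and integrating over $[0,T]\times\Omega$ gives
\[
\int_0^T\!\!\int_\Omega |\hat J|^{\frac{2p}{p+1}}\,\d x\,\d t\le \frac{p}{p+1}\Big(\int_0^T\!\!\int_\Omega \frac{|\hat J|^2}{\hat c}\,\d x\,\d t+\frac1p\int_0^T\!\!\int_\Omega \hat c^{\,p}\,\d x\,\d t\Big)<\infty ,
\]
both terms on the right being finite by the two bounds above. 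The exponent bookkeeping then closes the argument: with $p=\tfrac{d+1}{d}$ one has $\tfrac{2p}{p+1}=\tfrac{2(d+1)}{2d+1}=\tfrac{2d+2}{2d+1}=\tilde p>1$, which is the claimed integrability $\hat J\in\L^{\tilde p}$.

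I do not expect a genuine obstacle here; the only point requiring a little care is that $\hat J$ is a priori merely a measure, so the pointwise inequality must be justified on a Lebesgue density. This is guaranteed because the finiteness of $\int |\hat J|^2/\hat c$ forces $\hat J\ll\hat c\,\d x$ (equivalently $J^0_j\ll\mu^0_j$ from Lemma~\ref{lem:RegularityFluxesAndSpaceAndLiminf}, with $\hat J=J_1^0+J_2^0$), so $\hat J$ has a measurable density to which Part~1 applies.
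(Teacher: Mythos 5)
Your proposal is correct and follows essentially the same route as the paper: Part~1 by minimizing $c\mapsto |J|^{2}/c+\tfrac1p c^{p}$ at $c_{*}=|J|^{2/(p+1)}$, and Part~2 by combining this pointwise bound with $\hat c\in\L^{p}$ from Lemma~\ref{lem:IntegrabilityConcentrationc} and the $\L^{1}$-bound on $|\hat J|^{2}/\hat c$ coming from the dissipation functional. You merely spell out details the paper leaves implicit (the role of \eqref{eq:EstimateCoarseGrainedFluxes}, the treatment of the set $\{\hat c=0\}$, and the absolute continuity $\hat J\ll\hat c\,\d x$ needed to apply the pointwise inequality to densities), which is a welcome clarification rather than a deviation.
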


\begin{proof}
For proving the first part, let us define for fixed $J\in\R^{d}$
the function $F:]0,\infty[\rightarrow\R,\ F(c):=\frac{|J|^{2}}{c}+\frac{1}{p}c^{p}$.
Clearly, $F\geq0$ and $F(c)\rightarrow\infty$ as $c\rightarrow0$
or $c\rightarrow\infty$. We compute the minimum. We have $F'(c)=-|J|^{2}c^{-2}+c^{p-1}$
and hence the critical point is at $c_{0}=|J|^{2/(p+1)}$. Inserting
$c_{0}$ into $F$ we get $F(c)\geq F(c_{0})=|J|^{2}|J|^{-2/(p+1)}+\frac{1}{p}|J|^{2p/(p+1)}=(1+\frac{1}{p})|J|^{2p/(p+1)}$,
which proves the claim.

For second part, we use that by Lemma \ref{lem:IntegrabilityConcentrationc}
we have $\hat{c}\in\L^{p}([0,T]\times\Omega)$ for $p=\frac{d+1}{d}$.
This implies by the first part that $\hat{J}\in\L^{\tilde{p}}([0,T]\times\Omega,\R^{d})$
for $\tilde{p}=\frac{2\frac{d+1}{d}}{\frac{d+1}{d}+1}=\frac{2d+2}{2d+1}$.
\end{proof}
To obtain regularity for the whole reaction flux $b_{1}=-b_{2}$,
we have to get regularity also for $\div\hat{J}$. This is done by
mollifying the solution $(\hat{c},\hat{J})$ of the continuity equation
$\dot{\hat{c}}+\div\hat{J}=0$ in time. We already now that $\hat{c}:[0,T]\to\mathrm{Prob}(\Omega)$
is continuous by Lemma \ref{prop:AbsolutelyContinuousRepresentative}.
With a slight abuse of notation, we denote by $\hat{c}$ also a continuous
continuation on $\R$ such that $\hat{c}\in\L^{p}(\R,\L^{p}(\Omega))$.
Now, we mollify in time and define $\hat{c}^{\epsilon}(t)=\int_{\R}\hat{c}(s)\psi_{\epsilon}(t-s)\d s$
where $\psi_{\epsilon}$ is a positive and symmetric mollifier. Analogously,
we define $\hat{J}^{\epsilon}$ by convolution, i.e. $\hat{J}^{\epsilon}(t)=\int_{\R}\hat{J}(s)\psi_{\epsilon}(t-s)\d s$.
Since the continuity equation is linear, the smoothed functions $(\hat{c}^{\epsilon},\hat{J}^{\epsilon})$
satisfy again the continuity equation with the same no-flux boundary
conditions.

The next lemma shows that the dissipation functional can be approximated
by mollifying $(\hat{c},\hat{J})$. This basically uses the convexity
of $\hat{\fD}$ in $(\hat{c},\hat{J})$.
\begin{lem}
\label{lem:ConvolutionRecovery} Let $\mu^{0}\in\L_{w}^{\infty}([0,T]\times\Omega,\R_{\geq0}^{2})$
such that the a priori bounds $\fD_{0}^{V}(\mu^{0})<\infty$ and $\mathrm{ess\,sup}_{t\in[0,T]}\cE(\mu^{0}(t))<\infty$
hold. Let $\hat{J}\in\mathcal{M}([0,T]\times\Omega,\R^{d})$ be the
corresponding diffusion flux satisfying the continuity equation. Let
$\psi^{\epsilon}:\R\rightarrow\R$ be a positive and symmetric mollifier.
Define $\hat{c}^{\epsilon}(t)=\int_{\R}\hat{c}(s)\psi_{\epsilon}(t-s)\d s$
and $\hat{J}^{\epsilon}(t)=\int_{\R}\hat{J}(s)\psi_{\epsilon}(t-s)\d s$.
Then, we have $\sup_{\epsilon\in]0,1]}\mathrm{ess\,sup}_{t\in[0,T]}\cE(\mu^{\eps}(t))<\infty$
and 
\begin{align}
 & \int_{0}^{T}\left\{ \int_{\Omega}\sum_{j=1}^{2}\widetilde{\mathsf{Q}}(\delta_{j}c_{j}^{\epsilon},J_{j}^{\epsilon})\d x+\frac{1}{2}\int_{\Omega}\sum_{j=1}^{2}\delta_{j}w_{j}^{V}\frac{|\nabla\rho_{j}^{\epsilon}|^{2}}{\rho_{j}^{\epsilon}}\d x\right\} \d t\label{eq:ConvergenceDissipationFunctional}\\
 & \quad\rightarrow\int_{0}^{T}\left\{ \int_{\Omega}\sum_{j=1}^{2}\widetilde{\mathsf{Q}}(\delta_{j}c_{j}^{0},J_{j}^{0})\d x+\frac{1}{2}\int_{\Omega}\sum_{j=1}^{2}\delta_{j}w_{j}^{V}\frac{|\nabla\rho_{j}^{0}|^{2}}{\rho_{j}^{0}}\d x\right\} \d t,\nonumber 
\end{align}
which in particular, implies that $\fD_{\eps}^{V}(\mu^{\eps})\to\fD_{0}^{V}(\mu^{0})$.
\end{lem}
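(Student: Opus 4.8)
The plan is to read Lemma \ref{lem:ConvolutionRecovery} as a two-sided convexity estimate: time-convolution with $\psi_\epsilon$ is an average against a probability density, and every integrand in the diffusive part of $\fD_0^V$ is jointly convex in its arguments, so Jensen's inequality supplies an upper bound and lower semicontinuity the matching lower bound. I would begin by recording the energy estimate. Using the absolutely continuous representative of $\hat c$ from Proposition \ref{prop:AbsolutelyContinuousRepresentative} to extend $(\hat c,\hat J)$ to $\R$ so that $(\hat c^\epsilon,\hat J^\epsilon)$ still solves the continuity equation with no-flux boundary conditions, convexity of $\cE$ and Jensen's inequality give $\cE(\mu^\epsilon(t))\le\int_\R\psi_\epsilon(t-s)\,\cE(\mu(s))\,\d s\le\mathrm{ess\,sup}_{s}\cE(\mu(s))$, which is finite and uniform in $\epsilon$.

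For the limsup half of \eqref{eq:ConvergenceDissipationFunctional} I would exploit that $\widetilde{\mathsf Q}$ is jointly convex (it is the perspective of $\mathsf Q$, cf. Lemma \ref{lem:Convexity}) and that $(r,p)\mapsto|p|^2/r$ is jointly convex. Since the reconstruction is linear in the coarse-grained data, one has $c_j^\epsilon=\psi_\epsilon*c_j$, $J_j^\epsilon=\psi_\epsilon*J_j$ and $\nabla\rho_j^\epsilon=\psi_\epsilon*\nabla\rho_j$, the time-convolution commuting with the spatial gradient. Applying Jensen pointwise in $(t,x)$ against $\psi_\epsilon(t-\cdot)$ and integrating in $t$ by Fubini, with $\int_0^T\psi_\epsilon(t-s)\,\d t\le1$, bounds the left-hand side of \eqref{eq:ConvergenceDissipationFunctional} above by its value at $(\hat c,\hat J)$. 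The reverse liminf inequality is the lower-semicontinuity statement: as $\epsilon\to0$ one has $\hat c^\epsilon\to\hat c$ and $\hat J^\epsilon\to\hat J$, whence Lemma \ref{lem:LiminfEstimateAGS}, together with weak lower semicontinuity of the convex integral functionals, yields $\liminf\ge$ the value at the limit. Combining the two bounds proves \eqref{eq:ConvergenceDissipationFunctional}.

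It then remains to deduce $\fD_\epsilon^V(\mu^\epsilon)\to\fD_0^V(\mu^0)$, i.e.\ to show that the two reaction contributions of $\fD_\epsilon^V$ vanish. The reaction Fisher term is identically zero, since the reconstructed densities satisfy $\rho_1^\epsilon=\rho_2^\epsilon$ on the slow manifold and hence $(\sqrt{\rho_1^\epsilon}-\sqrt{\rho_2^\epsilon})^2\equiv0$. For the reaction-flux term $\int_0^T\!\!\int_\Omega\widetilde{\mathsf C}(\sqrt{c_1^\epsilon c_2^\epsilon}/\epsilon,\,b_2^\epsilon)\,\d x\,\d t$ the decisive gain from mollification is regularity: the reconstructed flux $b_2^\epsilon=a_1\div\hat J^\epsilon+\hat J^\epsilon\cdot a_2$ from \eqref{eq:Reconstruction} now lies in $\L^{\mathsf C}([0,T]\times\Omega)$, because $\div\hat J^\epsilon=-\dot{\hat c}^\epsilon$ is smooth in time and inherits the spatial integrability of $\hat c$, while $\hat J^\epsilon$ carries the $\L^{\tilde p}$-regularity of Lemma \ref{lem:IntegrabilityFluxJ}. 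With positivity of the coarse-grained density arranged beforehand by Lemma \ref{lem:RecoverySequencePositivity}, Lemma \ref{lem:BinLCimpliesCepstozero} drives this term to zero, completing the claim.

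I expect this last step to be the main obstacle. Lemma \ref{lem:BinLCimpliesCepstozero} is stated for a fixed density-flux pair, whereas along the diagonal the mollification scale is coupled to the gradient parameter $\epsilon$, so $\hat c^\epsilon$ may approach zero and $b_2^\epsilon$ may grow as $\epsilon\to0$; one must verify that the superlinear growth of $\mathsf C$ together with the $1/\epsilon$ scaling still overcomes this, which is precisely the quantitative refinement carried out in Proposition \ref{prop:ConvOfCTermWithSmoothing}.
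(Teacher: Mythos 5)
Your argument is essentially the paper's: the proof there consists of noting that the energy bound is immediate and that the convergence \eqref{eq:ConvergenceDissipationFunctional} "follows as in Lemma 8.1.10 of \cite{AmGiSa05GFMS} since the integrand is convex in $(\hat{c},\hat{J})$", and your Jensen-plus-lower-semicontinuity argument is precisely what that AGS lemma does. Your closing discussion of the reaction contributions (the vanishing Fisher term on the slow manifold and the need for the quantitative coupling of the mollification scale to $\eps$) correctly identifies that this part is not handled in this lemma but in Proposition \ref{prop:ConvOfCTermWithSmoothing}, which matches how the paper organizes the recovery-sequence construction.
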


\begin{proof}
The energy bound on $\mu^{\epsilon}$ is trivially satisfied. The
convergence for the dissipation functional \eqref{eq:ConvergenceDissipationFunctional}
follows directly as the proof of Lemma 8.1.10 in \cite{AmGiSa05GFMS}
since the integrand is convex in $(\hat{c},\hat{J})$.
\end{proof}
With these preparations, we are able to show the remaining step in
the proof of Theorem \ref{thm:LimsupEstimate}. The next proposition
shows in analogy to Lemma \ref{lem:BinLCimpliesCepstozero} that the
contribution by the reaction flux $b_{2}^{\epsilon}$ to the dissipation
functional $\fD_{\eps}^{V}$ converges to zero as $\epsilon\to0$.
\begin{prop}
\label{prop:ConvOfCTermWithSmoothing} Let $\mu^{0}\in\L_{w}^{\infty}([0,T]\times\Omega,\R_{\geq0}^{2})$
such that $\fD_{0}^{V}(\mu^{0})=\hat{\fD}_{0}(\hat{\mu})<\infty$
and $\mathrm{ess\,sup}_{t\in[0,T]}\cE(\mu^{0}(t))<\infty$ and let
$\hat{J}\in\mathcal{M}([0,T]\times\Omega,\R^{d})$ be the corresponding
diffusion flux satisfying the continuity equation. Let $\psi^{\epsilon}:\R\to\R$
be a positive and symmetric mollifier, which is specified below. Let
$\hat{c}^{\epsilon},\hat{J}^{\epsilon}$ the mollified functions as
in Lemma \ref{lem:ConvolutionRecovery} and $\hat{c}^{\epsilon,\gamma},\hat{J}^{\epsilon,\gamma}$
the mollified and shifted functions as in Lemma \ref{lem:RecoverySequencePositivity}.

Let $\psi^{\eps}$ be such that $\|\dot{\hat{c}}^{\epsilon}\|_{\L^{\tilde{p}}([0,T]\times\Omega)}\lesssim\frac{1}{\epsilon^{\alpha}}$
and $\gamma\geq C\epsilon^{1-\lambda}$ for $\epsilon\to0$ , where
$\tilde{p}$ is the integrability exponent of the fluxes as in Lemma
\ref{lem:IntegrabilityFluxJ}, $C>0$ is a positive constant and $\lambda\in[0,1[$,
$\alpha\in[0,1]$ satisfies the inequality $d+1\leq\frac{\lambda}{2\alpha}$.
Then, we have $|\fD_{\eps}(\mu^{\epsilon,\gamma})-\fD_{0}(\mu^{\epsilon,\gamma})|\to0$.
\end{prop}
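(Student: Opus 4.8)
The plan is to reduce the claim to the single reaction term and then estimate that term quantitatively. First I would note that, by the reconstruction \eqref{eq:Reconstruction}, the data $(c,J,b)$ built from $(\hat c^{\eps,\gamma},\hat J^{\eps,\gamma})$ is admissible for $\fD_\eps$, lies on the slow manifold (so $\rho_1^{V,\eps,\gamma}=\rho_2^{V,\eps,\gamma}$ and the reaction Fisher information vanishes), and reproduces exactly the coarse-grained diffusion part that realizes $\fD_0(\mu^{\eps,\gamma})=\hat{\fD}(\hat\mu^{\eps,\gamma})$ (Proposition \ref{prop:DissipationFunctionalInCoarseGrainedVariables} together with the flux equilibration \eqref{eq:EstimateCoarseGrainedFluxes}). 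Hence the difference is nonnegative and controlled by the reaction velocity alone,
\[
0\le\fD_\eps(\mu^{\eps,\gamma})-\fD_0(\mu^{\eps,\gamma})\le\int_0^T\!\!\int_\Omega\widetilde{\mathsf C}\Big(\tfrac{\sqrt{c_1 c_2}}{\eps},b_2\Big)\,\d x\,\d t,
\]
so it suffices to show the right-hand side vanishes as $\eps\to0$.

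Next I would exploit positivity as in Lemma \ref{lem:BinLCimpliesCepstozero}, but with the $\eps$-dependent lower bound coming from the shift. Since $\sqrt{c_1 c_2}=\frac{\sqrt{w_1^V w_2^V}}{w_1^V+w_2^V}\hat c^{\eps,\gamma}\ge\kappa\,\hat c^{\eps,\gamma}\gtrsim\gamma$ for a constant $\kappa>0$ (using that $w^V$ is smooth and bounded away from $0$ on the compact $\Omega$, and $\hat c^{\eps,\gamma}\ge 2\gamma/Z_\gamma$), the monotonicity of $\widetilde{\mathsf C}(\cdot,b_2)$ from Lemma \ref{lem:Convexity} gives
\[
\widetilde{\mathsf C}\Big(\tfrac{\sqrt{c_1c_2}}{\eps},b_2\Big)\le\widetilde{\mathsf C}\Big(\tfrac{\kappa\gamma}{\eps Z_\gamma},b_2\Big)=\tfrac{\kappa\gamma}{\eps Z_\gamma}\,\mathsf C\Big(\tfrac{\eps Z_\gamma}{\kappa\gamma}b_2\Big)\le 2\,|b_2|\,\log\!\Big(\beta_\eps|b_2|+1\Big),
\]
where $\beta_\eps:=\tfrac{\eps Z_\gamma}{\kappa\gamma}$ and I used $\mathsf C(r)\le 2|r|\log(|r|+1)$ from \eqref{eq:InequalityForC}. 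The condition $\gamma\ge C\eps^{1-\lambda}$ yields $\beta_\eps\lesssim\eps^{\lambda}\to0$.

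Then I would quantify the size of the reconstructed reaction flux. Writing $b_2=a_1\div\hat J^{\eps,\gamma}+\hat J^{\eps,\gamma}\!\cdot a_2$ with $a_1\in\mathrm C^1$, $a_2\in\mathrm C^0$ bounded, and using the continuity equation $\div\hat J^{\eps,\gamma}=-\dot{\hat c}^{\eps,\gamma}$, I obtain $|b_2|\le C(|\dot{\hat c}^{\eps}|+|\hat J^{\eps}|)$, whence the mollifier bound $\|\dot{\hat c}^{\eps}\|_{\L^{\tilde p}}\lesssim\eps^{-\alpha}$ and $\|\hat J^\eps\|_{\L^{\tilde p}}\le\|\hat J\|_{\L^{\tilde p}}<\infty$ (Young's inequality and Lemma \ref{lem:IntegrabilityFluxJ}) give $\|b_2\|_{\L^{\tilde p}}\lesssim\eps^{-\alpha}$. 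Applying the elementary inequality $\log(1+x)\le\tfrac{1}{s}x^{s}$ with the sharp choice $s=\tilde p-1\in(0,1)$ then yields
\[
\int_0^T\!\!\int_\Omega 2|b_2|\log(\beta_\eps|b_2|+1)\,\d x\,\d t\le\tfrac{2}{\tilde p-1}\,\beta_\eps^{\tilde p-1}\!\int_0^T\!\!\int_\Omega|b_2|^{\tilde p}\,\d x\,\d t\lesssim\eps^{\lambda(\tilde p-1)-\alpha\tilde p}.
\]
Since $\tilde p=\tfrac{2d+2}{2d+1}$ gives $\tfrac{\tilde p}{\tilde p-1}=2(d+1)$, the hypothesis $d+1\le\tfrac{\lambda}{2\alpha}$ is precisely the statement $\lambda(\tilde p-1)-\alpha\tilde p\ge0$; arranging the parameters so that it is strict (possible thanks to the slack in the integrability exponents, cf.\ the remark after Lemma \ref{lem:IntegrabilityConcentrationc}) makes the exponent positive and forces the right-hand side to $0$.

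The hard part will be the competition encoded in this final exponent. Mollification is needed to render $\div\hat J$ integrable, but it blows up $\|\dot{\hat c}^\eps\|_{\L^{\tilde p}}$ like $\eps^{-\alpha}$, whereas the positivity shift only contributes the favourable factor $\beta_\eps^{\tilde p-1}\sim\eps^{\lambda(\tilde p-1)}$; the crux is to choose a \emph{single} scaling of $\gamma$ and of the mollifier that simultaneously keeps the energy bounded, preserves the coarse-grained continuity equation, and guarantees $\lambda(\tilde p-1)-\alpha\tilde p>0$. This is exactly where the sharp integrability $\tilde p=\tfrac{2d+2}{2d+1}$ of Lemma \ref{lem:IntegrabilityFluxJ} (and, if necessary, its iterated improvement) is essential, and why the dimension-dependent threshold $d+1\le\tfrac{\lambda}{2\alpha}$ emerges.
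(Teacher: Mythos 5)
Your proposal is correct and follows essentially the same route as the paper's own proof: reduce the difference to the single term $\int_0^T\!\!\int_\Omega\widetilde{\mathsf C}\bigl(\sqrt{c_1c_2}/\eps,b_2\bigr)\,\d x\,\d t$ via the reconstruction \eqref{eq:Reconstruction}, use the monotonicity of $\widetilde{\mathsf C}$ together with the lower bound $\hat c^{\epsilon,\gamma}\gtrsim\gamma\geq C\epsilon^{1-\lambda}$, bound $\|b_2\|_{\L^{\tilde p}}\lesssim\epsilon^{-\alpha}$ through $\div\hat J^{\epsilon}=-\dot{\hat c}^{\epsilon}$ and Lemma \ref{lem:IntegrabilityFluxJ}, and apply the power-log inequality to arrive at the exponent $\lambda(\tilde p-1)-\alpha\tilde p$. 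Your remark that the inequality $d+1\leq\lambda/(2\alpha)$ should effectively be strict for the exponent to be positive is a fair (and slightly more careful) reading of the same final step.
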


\begin{proof}
First, we observe that for given $\hat{c}$ such a mollifier and these
constants $\alpha,\lambda$ satisfying all the conditions can be easily
constructed.

To prove the convergence, we follow the same strategy as in the proof
of Lemma \ref{lem:BinLCimpliesCepstozero}. Defining the reconstructed
concentrations and fluxes as in \eqref{eq:Reconstruction}, we observe
that 
\[
|\fD_{\eps}(\mu^{\epsilon,\gamma})-\fD_{0}(\mu^{\epsilon,\gamma})|\leq\int_{0}^{T}\int_{\Omega}\widetilde{\mathsf{C}}\left(\frac{\sqrt{c_{1}^{\epsilon,\gamma}c_{2}^{\epsilon,\gamma}}}{\eps},b_{2}^{\epsilon,\gamma}\right)\d x\d t\,.
\]
Using the bound from below on $\hat{c}^{\epsilon}$, and the inequality
$\log(x+1)\leq C_{\tilde{p}}x^{\tilde{p}-1}$, we get the estimate
\begin{align*}
\widetilde{\mathsf{C}}\left(\frac{\sqrt{c_{1}^{\epsilon,\gamma}c_{2}^{\epsilon,\gamma}}}{\eps},b_{2}^{\epsilon,\gamma}\right) & \leq\widetilde{\mathsf{C}}\left(C\epsilon^{-\lambda},b_{2}^{\epsilon,\gamma}\right)\leq C\epsilon^{-\lambda}\mathsf{C}\left(C^{-1}\epsilon^{\lambda}b_{2}^{\epsilon,\gamma}\right)\\
 & \leq2C\epsilon^{-\lambda}|b_{2}^{\epsilon,\gamma}|C^{-1}\epsilon^{\lambda}\log\left(C^{-1}\epsilon^{\lambda}|b_{2}^{\epsilon,\gamma}|+1\right)\\
 & \leq2C_{\tilde{p}}|b_{2}^{\epsilon,\gamma}|C^{-\tilde{p}+1}\epsilon^{\lambda(\tilde{p}-1)}|b_{2}^{\epsilon,\gamma}|^{\tilde{p}-1}\leq\tilde{C}|b_{2}^{\epsilon,\gamma}|^{\tilde{p}}\epsilon^{\lambda(\tilde{p}-1)},
\end{align*}
where $\tilde{C}=\tilde{C}(C_{\tilde{p}},C)$. By $\|\dot{c}^{\epsilon}\|_{\L^{\tilde{p}}([0,T]\times\Omega)}\lesssim\frac{1}{\epsilon^{\alpha}}$,
we conclude that $\|\div\hat{J}^{\epsilon}\|_{\tilde{p}}\lesssim\frac{1}{\epsilon^{\alpha}}$,
and by Lemma \ref{lem:IntegrabilityFluxJ}, we have $\hat{J}^{\epsilon}\in\L^{\tilde{p}}([0,T]\times\Omega)$.
Together this implies that $\|b_{2}^{\epsilon}\|_{\L^{\tilde{p}}([0,T]\times\Omega)}\lesssim\frac{1}{\epsilon^{\alpha}}$.
Hence, we get 
\[
|\fD_{\eps}(\mu^{\epsilon,\gamma})-\fD_{0}(\mu^{\epsilon,\gamma})|\lesssim\eps^{\lambda(\tilde{p}-1)}\eps^{-\tilde{p}\alpha}=\eps^{-\frac{1}{2d+1}\left\{ (2d+2)\alpha-\lambda\right\} }.
\]
Choosing, $\lambda,\alpha\in[0,1[$ such that $d+1\leq\frac{\lambda}{2\alpha}$,
we conclude that the right-hand side converges to zero, which proves
the claim.
\end{proof}

\section{Remarks for reaction-diffusion systems involving more species\label{sec:Multispecies}}

In the last section, we comment on linear reaction-diffusion systems
involving more species. The evolution equation for concentrations
$c\in\R_{\geq0}^{I}$, $I\in\N$ is given by 
\[
\dot{c}=\mathrm{diag}(\delta_{1},\dots,\delta_{I})\Delta c+A^{\eps}c
\]
where $A^{\eps}=A^{S}+\frac{1}{\eps}A^{F}$ is a Markov generator
(preserving positivity and total mass), which consists of a slow part
and a fast part. The main assumption is the $A^{\eps}$satisfies detailed
balance with respect to its stationary measure $w^{\eps}$. Similar
to \cite{MieSte19ECLRS,MiPeSt20EDPCNLRS}, we are going to assume
that the stationary vector $w^{\eps}$ satisfies $w^{\eps}\to w^{0}$
as $\eps\to0$ and that $w^{0}>0$. The positivity of the limit stationary
measure $w^{0}$ means that in the limit the evolution respects all
concentrations $c_{i}$ and is not degenerate.

The gradient structure is defined on the state space
\begin{align*}
Q=\mathrm{Prob}(\Omega\times\left\{ 1,\cdots,I\right\} ):=\{\mu=(\mu_{1},\dots,\mu_{I})\in\R^{I}:\mu_{i}\in{\cal M}(\Omega),\ \mu_{i}\geq0,~\mu_{i}(\Omega)=1\}.
\end{align*}
The driving energy functional $\cE_{\eps}:X\rightarrow\R_{\infty}$
has the form 
\begin{align*}
\cE_{\eps}(\mu)=\begin{cases}
\int_{\Omega}\sum_{j=1}^{I}E_{B}\left(\frac{c_{j}}{w_{j}^{\eps}}\right)w_{j}^{\eps}\d x, & \mathrm{~if~}\mu=c\,\d x\\
\infty, & \mathrm{otherwise}.
\end{cases} & ,
\end{align*}
and the dual dissipation potential splits into two parts
\begin{align*}
\cR^{*}(\mu,\xi) & =\cR_{\mathrm{diff}}^{*}(\mu,\xi)+\cR_{\mathrm{react}}^{*}(\mu,\xi)\\
\cR_{\mathrm{diff}}^{*}(\mu,\xi) & =\frac{1}{2}\int_{\Omega}\sum_{j=1}^{I}\delta_{j}|\nabla\xi_{j}(x)|^{2}\d\mu_{j},\ \cR_{\mathrm{react},\eps}^{*}(\mu,\xi)=\int_{\Omega}\sum_{i<j}\kappa_{ij}^{\eps}\mathsf{C}^{*}(\xi_{i}(x)-\xi_{j}(x))\,\d\sqrt{\mu_{i}\mu_{j}},
\end{align*}
where $\kappa_{ij}^{\eps}:=A_{ij}^{\eps}\left(\frac{w_{j}^{\eps}}{w_{i}^{\eps}}\right)^{1/2}$.
In particular, the reaction part of the dissipation potential splits
into a fast part and a slow part 
\begin{align*}
\cR_{\mathrm{react},\eps}^{*}(\mu,\xi) & =\cR_{\slow,\eps}^{*}(\mu,\xi)+\frac{1}{\eps}\cR_{\fast,\eps}^{*}(\mu,\xi)\\
\cR_{\mathrm{xy},\eps}^{*}(\mu,\xi) & =\int_{\Omega}\sum_{i<j}\tilde{\kappa}_{ij}^{\eps}\,\mathsf{C}^{*}(\xi_{i}(x)-\xi_{j}(x))\ \d\sqrt{\mu_{i}\mu_{j}},\quad\mathrm{xy}\in\left\{ \slow,\fast\right\} ,
\end{align*}
where $\tilde{\kappa}_{ij}^{\eps}$ are bounded and positive uniformly
in $\eps>0$. In particular, we call a reaction and its flux $b_{ij}$
slow if $A_{ij}^{\eps}=O(1)$ and fast if $A_{ij}^{\eps}=O(\eps^{-1})$.
Due to the detailed balance assumption and by $w^{0}>0$, the distinction
between fast and slow reactions is indeed well-defined.

In the remainder of the section, we briefly explain how to generalize
the proof of the EDP-convergence result also for this situation. Major
differences occur at two stages, namely in 1) deriving compactness
for slow reaction fluxes, 2) proving the limsup-estimate. The reaction
fluxes of the fast reactions are not seen in the limit and have to
be reconstructed in an analogous way as in the 2-species situation.
Firstly, we explain the compactness result and the liminf-estimate.
Secondly, we comment on the limsup-estimate.

\subsection{Compactness for slow reaction fluxes and liminf-estimate}

Here, we comment on proving compactness and the liminf-estimate for
the multi species case. In comparison to the previous situation, compactness
for the concentrations, by using strong compactness for coarse-grained
variables and convergence towards the slow manifold, can be derived
(cf. Lemmas \ref{LemmaRegInTime}, \ref{lem:ConvergenceToSlowManifoldAndStongCompactness}).
Moreover, compactness of diffusion fluxes and spatial regularity follows,
too (cf. Lemma \ref{lem:IntegrabilityFluxJ}).

In contrast to the situation of two species connected with one fast
reaction, where no slow reaction fluxes exists, compactness for slow
reaction fluxes $b_{ij}^{\eps}$ has to be derived in the multi species
case. This follows immediately from Lemma \ref{lem:LiminfEstimateAGS},
once compactness of $\sqrt{c_{i}^{\eps}c_{j}^{\eps}}$ is obtained.
At this point it is clear that weak convergence of $c^{\eps}\rightharpoonup c^{0}$
is not sufficient. Instead the previously derived strong convergence
of $c^{\eps}\rightarrow c^{0}$ implies by dominated convergence also
strong convergence of $\sqrt{c_{i}^{\eps}c_{j}^{\eps}}\rightarrow\sqrt{c_{i}^{0}c_{j}^{0}}$
in $\L^{1}([0,T]\times\Omega)$, and hence, compactness for the slow
fluxes $b_{ij}^{\eps}$. Compactness for fast reaction fluxes can
not be obtained as already mentioned in Remark \ref{rem:NoBoundOnFastReactionFluxes}.
Having proved compactness, the proof of the liminf-estimate is exactly
the same as for Theorem \ref{thm:LiminfEstimate}, since the functional
$\cD_{\eps}$ is jointly convex in all variables $(c,J,b)$.

\subsection{Equilibration and reconstruction of reaction fluxes und recovery
sequence}

A crucial observation throughout the proof of the $\Gamma$-convergence
was Lemma \ref{lem:Convexity}, which provides an equilibration of
fluxes assuming microscopic equilibria for the concentrations. In
Lemma \ref{prop:DissipationFunctionalInCoarseGrainedVariables}, we
derived equilibration for the diffusion fluxes. Similarly, also an
equilibration of the slow reaction fluxes can be derived. In \cite{MieSte19ECLRS}
a general operator-theoretic coarse-graining and reconstruction procedure
has been developed. This method can also be applied to derive coarse-grained
fluxes and a coarse-grained continuity equation, see \cite{Step21CGRCLRS}.
Importantly for us, as in \eqref{eq:Reconstruction} the reconstructed
slow reaction fluxes depend linearly on the coarse-grained reaction
fluxes. The fast reaction fluxes are then of the form
\[
b_{ij}=a_{1}\div\hat{J}_{i}+a_{2}\hat{J}_{i}+\sum_{j=3}^{k(i)}a_{ij}\hat{b}_{ij},
\]
where all functions $a_{j}$ are $\mathrm{C}^{0}(\Omega,\R^{k_{j}})$,
where $k_{1}=1$ , $k_{2}=d$ and $k_{ij}=1$.

In order to prove that the constant sequence for smooth and positive
concentrations is indeed a recovery sequence, we follow the same reasoning
as in the Lemmas \ref{lem:BinLCimpliesCepstozero} and \ref{prop:ConvOfCTermWithSmoothing}.
The only difference comes from the explicit depends on the coarse-grained
reaction flux $\hat{b}_{ij}$, Using the bound on the limit dissipation
functional (which provides bounds on $\int_{0}^{T}\!\!\int_{\Omega}\widetilde{\mathsf{C}}(\sqrt{\hat{c}_{i}\hat{c}_{j}},\hat{b}_{ij})\d x\d t$)
and the next Lemma \ref{lem:InequalityForC}, we obtain that $\hat{b}_{ij}\in\L^{\mathsf{C}}([0,T]\times\Omega)$
(we refer to \cite{FreMie21DTL} for a proof). Since $\L^{\mathsf{C}}$
is an Orlicz-space, we conclude that the reconstructed fluxes $b_{i}$
are in $\L^{\mathsf{C}}$. This allows to proceed as in Lemma \ref{prop:ConvOfCTermWithSmoothing}
and proves the existence of a recovery sequence.
\begin{lem}[\cite{FreMie21DTL}]
\label{lem:InequalityForC}Let $p>1$. Then, for all $a\geq0$ and
$B\in\R$ we have
\[
\widetilde{\mathsf{C}}(a,B)\geq\left(1-\frac{1}{p}\right)\widetilde{\mathsf{C}}(B)-\frac{2}{p}a^{p}.
\]
In particular, setting $a=\sqrt{c_{i}c_{j}}$ and $B=b_{ij}$ we have
\[
\int_{0}^{T}\!\!\int_{\Omega}\widetilde{\mathsf{C}}(\sqrt{c_{i}c_{j}},b_{ij})\d x\d t+\|\sqrt{c_{i}c_{j}}\|_{\L^{p}([0,T]\times\Omega)}^{p}\gtrsim\int_{0}^{T}\!\!\int_{\Omega}\widetilde{\mathsf{C}}(b_{ij})\d x\d t,
\]
which proves that $b_{ij}\in\L^{\mathsf{C}}([0,T]\times\Omega)$ if
$c_{i},c_{j}\in\L^{p}([0,T]\times\Omega)$ for some $p>1$.
\end{lem}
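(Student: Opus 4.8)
The plan is to reduce the whole statement to the pointwise inequality $\widetilde{\mathsf{C}}(a,B)\ge(1-\tfrac1p)\mathsf{C}(B)-\tfrac2p a^p$ and then obtain the $\L^{\mathsf{C}}$-membership claim by integration. Indeed, once the pointwise bound is available, I would set $a=\sqrt{c_ic_j}$ and $B=b_{ij}$, integrate over $[0,T]\times\Omega$, and rearrange to get $(1-\tfrac1p)\int\!\!\int\mathsf{C}(b_{ij})\,\d x\,\d t\le\int\!\!\int\widetilde{\mathsf{C}}(\sqrt{c_ic_j},b_{ij})\,\d x\,\d t+\tfrac2p\int\!\!\int(c_ic_j)^{p/2}\,\d x\,\d t$. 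The first term on the right is finite by the bound on the dissipation functional $\fD_0^V$, and the second is finite whenever $c_i,c_j\in\L^p$; since $\widetilde{\L}^{\mathsf{C}}=\L^{\mathsf{C}}$ is a genuine Banach (Orlicz) space as recorded after \eqref{eq:InequalityForC}, this yields $b_{ij}\in\L^{\mathsf{C}}([0,T]\times\Omega)$.

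For the pointwise inequality I would split on the size of $a$. If $a\le1$, the monotonicity property of Lemma \ref{lem:Convexity} (valid since $\mathsf{C}(0)=0$) gives $\widetilde{\mathsf{C}}(a,B)\ge\widetilde{\mathsf{C}}(1,B)=\mathsf{C}(B)\ge(1-\tfrac1p)\mathsf{C}(B)\ge(1-\tfrac1p)\mathsf{C}(B)-\tfrac2p a^p$, so this case is immediate. The substance is the regime $a>1$. Here I would use the dual representation coming directly from the definition $\widetilde{\mathsf{C}}(a,\cdot)=(a\,\mathsf{C}^*)^{*}$, namely $\widetilde{\mathsf{C}}(a,B)=\sup_{\xi}\{B\xi-a\,\mathsf{C}^*(\xi)\}$, alongside $\mathsf{C}(B)=\sup_{\xi}\{B\xi-\mathsf{C}^*(\xi)\}$. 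Evaluating the first supremum at the optimizer $\xi_0\in\partial\mathsf{C}(B)$ of the second and using the Fenchel identity $\mathsf{C}^*(\xi_0)=B\xi_0-\mathsf{C}(B)$ produces the tangent-type bound $\widetilde{\mathsf{C}}(a,B)\ge\mathsf{C}(B)-(a-1)\,\mathsf{C}^*(\xi_0)$. It then remains to absorb the excess $(a-1)\,\mathsf{C}^*(\xi_0)$ into $\tfrac1p\mathsf{C}(B)+\tfrac2p a^p$ by a Young inequality with the conjugate exponents $p$ and $q=\tfrac{p}{p-1}$, splitting the product so that the $a$-part is controlled by $\tfrac2p a^p$ and the remaining factor is controlled by $\tfrac1p\mathsf{C}(B)$.

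The hard part will be precisely this absorption. The first-order bound above degrades for moderately large $a$, so I expect to need a two-regime argument: when $\tfrac2p a^p$ already exceeds $(1-\tfrac1p)\mathsf{C}(B)$ the claim is trivial from nonnegativity of $\widetilde{\mathsf{C}}$, isolating a bounded range of $a$ in which the quantitative estimate must be carried out directly on the perspective $a\,\mathsf{C}(B/a)$. The delicate point is that the cosh-entropy $\mathsf{C}$ grows like $2|B|\log|B|$, so the excess $\mathsf{C}^*(\xi_0)$ is essentially linear in $B$ and the crude factor-four two-sided bounds \eqref{eq:InequalityForC} are too lossy to close the estimate with the stated constants; one needs the sharp superlinear growth of $\mathsf{C}$ (or an infimal-convolution/minimax reformulation $\inf_{a>0}\{a\,\mathsf{C}(B/a)+\tfrac2p a^p\}\ge\tfrac1q\mathsf{C}(B)$ together with the explicit form $\mathsf{C}^*(\xi)=4(\cosh(\xi/2)-1)$) to make the balancing tight. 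This sharper analysis is exactly the content provided in \cite{FreMie21DTL}, which I would invoke for the remaining calculation.
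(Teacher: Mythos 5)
First, note that the paper itself does not prove this lemma: it is imported verbatim from \cite{FreMie21DTL}, so there is no in-paper argument to compare against. Your reduction of the integral statement to the pointwise inequality and your treatment of the case $a\le 1$ via the monotonicity of Lemma \ref{lem:Convexity} are both correct. The genuine gap is the case $a>1$, which is the entire content of the lemma, and your proposed route there does not close. The tangent-type bound $\widetilde{\mathsf{C}}(a,B)\ge \mathsf{C}(B)-(a-1)\,\mathsf{C}^*(\xi_0)$ with $\xi_0\in\partial\mathsf{C}(B)$ requires you to absorb $(a-1)\,\mathsf{C}^*(\xi_0)$ into $\tfrac1p\mathsf{C}(B)+\tfrac2p a^p$. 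Since $\mathsf{C}^*(\xi_0)=4\bigl(\sqrt{1+B^2/4}-1\bigr)\sim 2|B|$ while $\mathsf{C}(B)\sim 2|B|\log|B|$, the excess is of order $a|B|$; in the only nontrivial regime $1<a\lesssim \mathsf{C}(B)^{1/p}$ (outside of it the right-hand side is $\le 0$ and there is nothing to prove, as you observe) this is of order $|B|^{1+1/p}$ up to logarithms, which dominates the available budget $\tfrac1p\mathsf{C}(B)+\tfrac2p a^p\sim |B|\log|B|$. So no Young splitting can rescue the first-order bound; you correctly sense this, but then defer the essential step back to \cite{FreMie21DTL}, which leaves the attempt without a proof of its central inequality.

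The repair is to test the dual representation at $\xi_0/q$ rather than at $\xi_0$, where $q=p/(p-1)$. The Fenchel identity gives $B\xi_0/q=\tfrac1q\mathsf{C}(B)+\tfrac1q\mathsf{C}^*(\xi_0)$, so
\[
\widetilde{\mathsf{C}}(a,B)\;\ge\;B\tfrac{\xi_0}{q}-a\,\mathsf{C}^*\bigl(\tfrac{\xi_0}{q}\bigr)\;=\;\tfrac1q\mathsf{C}(B)+\Bigl(\tfrac1q\mathsf{C}^*(\xi_0)-a\,\mathsf{C}^*\bigl(\tfrac{\xi_0}{q}\bigr)\Bigr),
\]
and it remains to show $a\,\mathsf{C}^*(\xi_0/q)\le\tfrac1q\mathsf{C}^*(\xi_0)+\tfrac2p a^p$. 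Writing $\mathsf{C}^*(\xi)=2(\e^{\xi/2}-1)+2(\e^{-\xi/2}-1)$ and taking $B\ge0$ (so $s:=\xi_0/2\ge0$; the case $B<0$ follows by symmetry), the growing term is handled by the scalar Young inequality $a\e^{s/q}\le\tfrac{a^p}{p}+\tfrac1q\e^{s}$ together with $a\ge1\ge\tfrac1q$, giving $2a(\e^{s/q}-1)\le\tfrac{2a^p}{p}+\tfrac2q(\e^{s}-1)$; the decaying term is handled by concavity of $t\mapsto 1-\e^{-t}$, which yields $1-\e^{-s/q}\ge\tfrac1q(1-\e^{-s})$ and hence $2a(\e^{-s/q}-1)\le\tfrac2q(\e^{-s}-1)$ for $a\ge1$. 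Summing gives exactly the required bound with the constant $\tfrac2p a^p$, completing the case $a>1$ and making your outline into a self-contained proof.
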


\vspace{1.5cm}\noindent \textbf{Acknowledgement:} The research was
supported by Deutsche Forschungsgemeinschaft (DFG) through the Collaborative
Research Center SFB 1114 ``\textit{Scaling Cascades in Complex Systems}''
(Project no. 235221301), subproject C05 ``Effective models for materials
and interfaces with multiple scales''. The author is grateful to
Alexander Mielke for many helpful discussions and for having proposed
the problem.

{\footnotesize{}\bibliographystyle{/Users/astephan/Documents/Science/my_alpha}
\bibliography{/Users/astephan/Documents/Science/total-bib-file}
}{\footnotesize\par}
\end{document}